\newcommand{\C}{{\mathbb{C}}}
\newcommand{\R}{{\mathbb{R}}}
\theoremstyle{plain}
\newtheorem{theorem}{Theorem}
\newtheorem{proposition}[theorem]{Proposition}
\newtheorem{lemma}[theorem]{Lemma}
\newtheorem{corollary}[theorem]{Corollary}
\theoremstyle{definition}
\newtheorem{remark}[theorem]{Remark}
\numberwithin{equation}{section}
\numberwithin{theorem}{section}
\numberwithin{equation}{section}
\newcommand{\KKK}{\mathcal{K}}
\renewcommand{\th}{{\tilde{H}_c}}
\renewcommand{\R}{\mathbb{R}}
\renewcommand{\i}{i}
\renewcommand{\o}[1]{\text{o}\left({#1}\right)}
\newcommand{\abs}[1]{\left|{#1}\right|}
\newcommand{\sts}[1]{\left({#1}\right)}
\newcommand{\ltl}[1]{~\left\{{#1}\right\}~}
\newcommand{\dual}[2]{\left\langle{#1}~,~{#2}\right\rangle}
\newcommand{\normth}[1]{\|#1\|_{\tilde{H}_c}}
\newcommand{\normx}[1]{\|#1\|_{X_c}}
\newcommand{\normdh}[1]{\|#1\|_{\dot{H}^1}}
\newcommand{\normf}[1]{\|#1\|_{L^4}}
\newcommand{\normsix}[1]{\|#1\|_{L^6}}
\newcommand{\normto}[1]{\|#1\|_{L^2}}
\newcommand{\Rmnum}[1]{\expandafter\@slowromancap\romannumeral #1@}
\begin{document}
  \onehalfspacing

\title[DNLS]
{Solitary waves for nonlinear Schr\"{o}dinger equation with
derivative}

\author[]{Changxing Miao}
\address{\hskip-1.15em Changxing Miao:
\hfill\newline Institute of Applied Physics and Computational
Mathematics, \hfill\newline P. O. Box 8009,\ Beijing,\ China,\
100088.}  \email{miao\_changxing@iapcm.ac.cn}

\author[]{Xingdong Tang}
\address{\hskip-1.15em Xingdong Tang \hfill\newline Institute of
Applied Physics and Computational Mathematics, \hfill\newline P. O.
Box 8009,\ Beijing,\ China,\ 100088.}
\email{xdtang202@163.com}

\author[]{Guixiang Xu}
\address{\hskip-1.15em Guixiang Xu \hfill\newline Institute of
Applied Physics and Computational Mathematics, \hfill\newline P. O.
Box 8009,\ Beijing,\ China,\ 100088. }
\email{xu\_guixiang@iapcm.ac.cn}

\subjclass[2010]{35L70, 35Q55}

\keywords{Derivative Schr\"{o}dinger equation; Global
well-posedness; Invariant set; Solitary waves; Structure analysis; Variational method.}

\begin{abstract}In this paper, we characterize a family of solitary waves for  NLS with derivative (DNLS) by the structue analysis and the variational argument. Since (DNLS) doesn't enjoy the Galilean invariance any more, the structure analysis here is closely related with the nontrivial momentum and shows the equivalence of nontrivial solutions between the quasilinear and the semilinear equations. Firstly, for the subcritical parameters $4\omega>c^2$ and the critical parameters $4\omega=c^2, c>0$,  we show the existence and uniqueness of the solitary waves for (DNLS), up to the phase rotation and spatial translation symmetries. Secondly, for the critical parameters $4\omega=c^2, c\leq 0$ and the supercritical parameters $4\omega<c^2$, there is no nontrivial solitary wave for (DNLS). At last, we make use of the invariant sets, which is related to the variational characterization of the solitary wave, to obtain the global existence of solution for (DNLS) with initial data in the invariant set $\mathcal{K}^+_{\omega,c}\subseteq H^1(\R)$, with $4\omega=c^2, c>0$ or $4\omega>c^2$.

On one hand, different with the scattering result for the $L^2$-critical NLS in \cite{Dod:NLS_sct}, the scattering result of (DNLS) doesn't hold for initial data in $\mathcal{K}^+_{\omega,c}$ because of the existence of infinity many small solitary/traveling waves in $\mathcal{K}^+_{\omega,c},$ with $4\omega=c^2, c>0$ or $4\omega>c^2$. On the other hand, our global result improves the global result in \cite{Wu-DNLS, Wu-DNLS2} (see Corollary \ref{cor:gwp}).
\end{abstract}

\maketitle


%
%
%
%
\section{Introduction}
In this paper, we consider the solitary waves of nonlinear
Schr\"{o}dinger equation with derivative

\begin{equation} \label{DNLS}
\left\lbrace \aligned
    &
    \i\partial_t u + \partial^2_x u  + \frac{1}{2}\i \abs{u}^2\partial_x u - \frac{1}{2}\i u^2\partial_x\overline{u}+\frac{3}{16}\abs{u}^4 u  =0,\; t\in \R\\
    &
    u\sts{0,x}=u_0\sts{x}\in H^1(\R),
\endaligned
\right.
\end{equation}
the equation \eqref{DNLS} appears in plasma physics \cite{MOMT-PHY, M-PHY,
SuSu-book}, and has many equivalent forms. For example, it is equivalent to the following equation
\begin{equation} \label{DNLS-a}
\left\lbrace \aligned
    &
    \i\partial_t v + \partial^2_x v + \i\partial_x\sts{\abs{v}^2 v}=0,\; t\in \R \\
    &
    v\sts{0,x}=v_0\sts{x}\in H^1(\R)
\endaligned
\right.
\end{equation}
by the following gauge
transformation
\begin{align*}v(t,x)\mapsto u(t,x)=G_{3/4}(v)(t,x) := e^{\i \frac34
\int^x_{-\infty}|v(t,\eta)|^2\;d\eta}v(t,x). \end{align*}

The equation \eqref{DNLS} is $L^2$-critical derivative NLS since the scaling transformation
\begin{align*}u(t,x)\mapsto u_{\lambda}(t,x)=\lambda^{1/2}u(\lambda^2t, \lambda x)
\end{align*}
leaves both \eqref{DNLS} and the mass invariant. The mass, momentum
and energy of the solution for \eqref{DNLS} are defined as following
\begin{align}
M(u)(t)= & \frac12 \int |u(t,x)|^2 \;dx, \label{def:mass}\\
P(u)(t)=& -\frac12 \Im \int\bar{u}\,\partial_x u + \frac18 \int
|u(t,x)|^4\; dx, \label{def:mom}\\
E(u)(t)=& \frac12 \int |\partial_x u(t,x)|^2 \;dx -\frac{1}{32} \int
|u(t,x)|^6\; dx. \label{def:eng}
\end{align}
They are conserved under the flow \eqref{DNLS} by the local
well-posedness theory in $H^1$ according to the phase rotation,
spatial translation and time translation invariances. Since \eqref{DNLS} or \eqref{DNLS-a} doesn't enjoy the
Galilean and pseudo-conformal invariance any more, there is no explicit blowup solution for \eqref{DNLS} and the momentum is not trivial in dealing with the solitary/traveling waves of \eqref{DNLS} any more.

Local well-posedness thery for \eqref{DNLS} in the energy space was
worked out by N. Hayashi and T. Ozawa \cite{HaOz-94, Oz-96}. They
combined the fixed point argument with $L^4_IW^1_{\infty}(\R)$
estimate to construct the local-in-time solution with arbitrary data in the
energy space. For other results, we can refer to \cite{Ha-93,
HaOz-92}. Since \eqref{DNLS} is $\dot H^1$-subcritical case, the
maximal lifespan interval of the energy solution  only depends  on the $H^1$ norm of
initial data.
\begin{theorem}\label{Thm:LWP}\cite{HaOz-94, Oz-96} For any $u_0 \in H^1(\R)$ and $t_0 \in \R$, there
exists a unique maximal-lifespan solution $u:I\times \R \rightarrow
\C$ to \eqref{DNLS} with $u(t_0)=u_0$, the map $u_0\rightarrow u$ is continuous from $H^1(\R)$ to $C(I, H^1(\R))\cap L^4_{loc}(I; W^{1,\infty}(\R))$. Moreover the solution has the
following properties:
\begin{enumerate}
\item $I$ is an open neighborhood of $t_0$.

\item The mass, momentum and energy are conserved, that is, for all
$t\in I$,
\begin{align*}
M(u)(t)=M(u)(t_0), \;\; P(u)(t)=P(u)(t_0),\;\; E(u)(t)=E(u)(t_0).
\end{align*}

\item If\; $\sup(I)<+\infty$, or $( \inf(I)>-\infty)$ , then
\begin{align*}
\lim_{t\rightarrow\sup(I)}\big\|\partial_x
u(t)\big\|_{L^2}=+\infty,\;\;
\left(\lim_{t\rightarrow\inf(I)}\big\|\partial_x
u(t)\big\|_{L^2}=+\infty, respectively.\right)
\end{align*}

\item If $\big\| u(0)\big\|_{H^1}$ is sufficiently small,
then $u$ is a global solution.
\end{enumerate}
\end{theorem}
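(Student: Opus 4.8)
Since the statement codifies the Hayashi--Ozawa theory, the plan is to combine a Duhamel/contraction scheme with an energy estimate tailored to the loss of derivative, using crucially that \eqref{DNLS} is already the $G_{3/4}$-gauged form of \eqref{DNLS-a}. First I would write \eqref{DNLS} in integral form $u(t)=e^{\i(t-t_0)\partial_x^2}u_0+\i\int_{t_0}^t e^{\i(t-s)\partial_x^2}N(u)(s)\,ds$, where $N(u)=\frac12\i\abs{u}^2\partial_x u-\frac12\i u^2\partial_x\overline u+\frac{3}{16}\abs{u}^4u$, and look for a fixed point in a ball of $C([t_0-T,t_0+T];H^1(\R))\cap L^4([t_0-T,t_0+T];W^{1,\infty}(\R))$ for $T$ small. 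The linear input is the one-dimensional Strichartz estimate at the admissible endpoint $(q,r)=(4,\infty)$, i.e. $\|e^{\i t\partial_x^2}f\|_{L^4_t W^{1,\infty}_x}\lesssim\normchone{f}$ together with its dual inhomogeneous version; this is the $L^4_IW^{1,\infty}$ bound quoted in the text, and it is exactly what lets me place $\partial_x u$ into $L^\infty_x$ after one time integration.

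The step I expect to be the genuine obstacle is the apparent loss of derivative: $\partial_x N(u)$ contains $\abs{u}^2\partial_x^2 u$, so $\|N(u)\|_{L^1_tH^1_x}$ cannot be bounded naively. Here the gauge structure is what saves the scheme. Rather than differentiating the Duhamel term, I would estimate the energy of $u_1=\partial_x u$, which solves $\i\partial_t u_1+\partial_x^2u_1+\partial_xN(u)=0$. In the identity $\frac12\frac{d}{dt}\normto{u_1}^2=\Re\int\overline{u_1}\,\partial_t u_1$ the two potentially derivative-losing contributions are $\frac12\Re\int\overline{u_1}\,\abs{u}^2\partial_x u_1$ and $-\frac12\Re\int\overline{u_1}\,u^2\partial_x\overline{u_1}$. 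Because $\abs{u}^2$ is real, the first equals $-\frac14\int\partial_x(\abs{u}^2)\,\abs{u_1}^2$ after integrating by parts; the second, using $\overline{u_1}\,\partial_x\overline{u_1}=\frac12\partial_x(\overline{u_1}^2)$ and integrating by parts onto $u^2$, reduces to $\frac12\Re\int u\,u_1\,\overline{u_1}^2$. Both are free of second derivatives, and together with the quintic term they are dominated by $(\|u\|_{L^\infty}\|\partial_x u\|_{L^\infty}+\|u\|_{L^\infty}^4)\normto{u_1}^2$. Combining with the trivial $L^2$ bound (mass) and Sobolev's $H^1(\R)\hookrightarrow L^\infty(\R)$, a Grönwall inequality whose integrating factor, of the form $\int(\|u\|_{L^\infty}\|\partial_x u\|_{L^\infty}+\|u\|_{L^\infty}^4)\,ds$, is finite by the $L^4_tW^{1,\infty}_x$ membership and Hölder in time, closes the a priori estimate on $\|u\|_{L^\infty_tH^1_x}$ on a time $T=T(\normchone{u_0})$.

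To turn these a priori estimates into an actual solution I would regularize \eqref{DNLS} (frequency truncation, or a dissipative $\eps\partial_x^2$ term), derive the same bounds uniformly in $\eps$, and pass to the limit; uniqueness and the continuity of $u_0\mapsto u$ into $C(I;H^1(\R))\cap L^4_{loc}(I;W^{1,\infty}(\R))$ follow by running the identical energy estimate on the difference of two solutions, whose multilinear differences share the same real-coefficient structure. The four listed properties are then corollaries of this local theory. Property (1) is immediate since the construction yields a two-sided existence time. For (2) one differentiates \eqref{def:mass}--\eqref{def:eng} along the flow: using the equation and integrating by parts, each time derivative collapses to the integral of a perfect $x$-derivative, hence vanishes --- first for the smooth regularized solutions, then for the $H^1$ solution by the continuity just established. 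Property (3) is the blow-up alternative: because the local existence time depends only on $\normchone{u_0}$ while $\normto{u}$ is conserved, a finite bound on $\normto{\partial_x u}$ up to $\sup(I)<\infty$ would permit reapplying the local theory with a uniform step beyond $\sup(I)$, contradicting maximality. Finally, for (4), if $\normchone{u_0}$ is small the a priori bound keeps $u$ inside the contraction ball for all time, so the local map can be iterated without shrinking $T$, giving $I=\R$.
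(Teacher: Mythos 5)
Your integration-by-parts computation for $u_1=\partial_x u$ is correct --- because $\abs{u}^2$ is real and $\overline{u_1}\partial_x\overline{u_1}=\tfrac12\partial_x(\overline{u_1}^{\,2})$, the $L^2$ identity for $u_1$ contains no second derivatives, and the Gr\"onwall factor is $\int_I\big(\|u\|_{L^\infty}\|\partial_x u\|_{L^\infty}+\|u\|_{L^\infty}^4\big)dt$. But this makes your $H^1$ bound \emph{conditional} on the $L^4_IW^{1,\infty}$ membership, and nothing in your scheme actually produces that norm. The only tool you offer for it is the Strichartz estimate applied to the Duhamel formula, and there the nonlinearity must be measured with one derivative in a dual space: $\partial_x N(u)$ contains $\tfrac{\i}{2}\abs{u}^2\partial_x^2u$ and $-\tfrac{\i}{2}u^2\partial_x^2\bar u$, and no integration by parts is available \emph{inside} the Duhamel integral. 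At $H^1$ regularity these terms are uncontrollable, so the two halves of your argument are circular. The regularization step inherits the same defect: for the truncated or viscous equation, the $H^1$ energy bound is uniform in $\eps$ only if the $L^4W^{1,\infty}$ bound is, and the latter via Strichartz costs $\|u^{\eps}\|_{H^2}$, which is not uniform. A pure energy method closes only in $H^s$, $s>3/2$, where $\|\partial_x u\|_{L^\infty}$ is Sobolev-controlled. Note also that being ``already the $G_{3/4}$-gauged form'' does not defuse the loss: the nonlinearity of \eqref{DNLS} equals $-u\,\Im\sts{\bar u\,\partial_x u}+\tfrac{3}{16}\abs{u}^4u$, which is good for energy identities but still loses a derivative in Strichartz estimates. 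This is precisely where the proof the paper invokes (Hayashi--Ozawa \cite{HaOz-94, Oz-96}) supplies the missing idea: further gauge transformations are applied so that the gauged unknowns (essentially a gauged $u$ together with a gauged $\partial_x u$) satisfy a \emph{system} with no derivative on the unknown in the dangerous term, after which a genuine contraction in $C_TH^1\cap L^4_TW^{1,\infty}$ with the endpoint pair $(4,\infty)$ closes without loss. (Alternatives that work without gauging are the Kato local smoothing estimates of Kenig--Ponce--Vega, or the $X^{s,b}$ method as in \cite{Ta-99}; you invoke neither.)

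Two secondary points. First, continuity of $u_0\mapsto u$ into $C(I;H^1)\cap L^4_{loc}(I;W^{1,\infty})$ does not follow from ``the identical energy estimate on the difference'': the difference estimate closes only at the $L^2$ level (at the $H^1$ level it loses a derivative again), so it yields uniqueness in the class but continuity requires an additional Bona--Smith-type approximation argument; indeed the paper recalls that the flow fails to be uniformly continuous in low regularity \cite{Ta-01, BiLi-01-Illposed-DNLS-BO}, so continuity is genuinely delicate. Second, your proof of item (4) --- iterating the local map ``without shrinking $T$'' --- is not valid as stated, since the local time depends on $\normchone{u(t)}$, which you have no a priori reason to keep small; the correct mechanism is conservation of mass and energy combined with the sharp Gagliardo--Nirenberg inequality of \cite{Wein}, which for small $\|u_0\|_{H^1}$ (in particular $\|u_0\|_{L^2}$ below the $\sqrt{2\pi}$ threshold of \eqref{Cond:smalldata}) makes the energy coercive and bounds $\normto{\partial_x u(t)}$ uniformly, whence $I=\R$ by the blow-up alternative (3).
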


The sharp local well-posedness result in $H^s, s\geq 1/2$ is due to
H. Takaoka \cite{Ta-99} by Bourgain's Fourier restriction method.
The sharpness is shown in \cite{Ta-01} in the sense that nonlinear
evolution $u(0)\mapsto u(t)$ fails to be $C^3$ or even uniformly
$C^0$ in this topology, even when $t$ is arbitrarily close to zero
and $H^s$ norm of the data is small(see also Biagioni-Linares
\cite{BiLi-01-Illposed-DNLS-BO}).

In \cite{Oz-96}, the global well-posedness is obtained for \eqref{DNLS}
in energy space under the smallness condition
\begin{align}\label{Cond:smalldata}
\|u_0\|_{L^2} < \sqrt{2 \pi},
\end{align}
the argument is based on the sharp Gagliardo-Nerenberg inequality
and the energy method (conservation of mass and energy). This is
improved by H. Takaoka \cite{Ta-01}, who proved global
well-posedness in $H^s$ for $s>32/33$ under the condition
\eqref{Cond:smalldata}. His argument is based on Bourgain's
restriction method, which separated the evolution of low frequencies
and of high frequencies of initial data and notices that nonlinear
evolution has $H^1$ regularity effect even for rough solution $u\in
H^s$. In \cite{CKSTT-01, CKSTT-02}, I-team used the ''I-method'' to
show global well-posedness in $H^s, s>1/2$ under
\eqref{Cond:smalldata}, I-team defined $Iu$ as a modified $H^s$
norm, whose energy is nearly conserved in time by capturing
nonlinear cancellation in frequency space under the flow
\eqref{DNLS}. Later, Miao, Wu and Xu \cite{MiaoWX-2011} showed the
sharp global well-posedness in $H^{1/2}$ under
\eqref{Cond:smalldata} by using I-method and the refined resonant
decomposition technique.

In this paper, we consider the existence of the solitary/traveling waves
in the energy space for \eqref{DNLS} and its role in the long time analysis
of solution to \eqref{DNLS}. It is known in \cite{SaarloosH:PhyD}
that \eqref{DNLS} has a two-parameter family of solitary/traveling
waves solutions of the form:
\begin{align}\label{introduce:tw}u_{\omega,c}(t,x)= &  e^{i\omega t } \varphi_{\omega,c}(x-ct) \\
:=& e^{i\omega t + \i \frac{c}{2}(x-ct)}\phi_{\omega,c}(x-ct)
\label{introduce:structure}
\end{align}where $(\omega,c)\in \R^2, \; 4\omega>c^2$ and

\begin{align}\label{solt:sub}\phi_{\omega,c}(x)=\left[\frac{\sqrt{\omega}}{4\omega-c^2}
\left\{\cosh\big(\sqrt{4\omega-c^2}x\big)-\frac{c}{2\sqrt{\omega}}\right\}\right]^{-1/2},
\end{align}
which is a positive solution of
\begin{align}\label{V:inner para}\left(\omega-\frac{c^2}{4}\right)\phi - \partial^2_x \phi
-\frac{3}{16}\left|\phi\right|^4\phi = -\frac{c}{2}
\left|\phi\right|^2\phi.
\end{align}

Note that the solitary/traveling waves have the following mass
\begin{align}\left\| e^{i\omega t }
\varphi_{\omega,c}(x-ct)\right\|^2_{L^2}= & \left\|e^{i\omega t + \i
\frac{c}{2}(x-ct)} \phi_{\omega,c}(x-ct) \right\|^2_{L^2} \nonumber\\
= & 8 \tan^{-1}\sqrt{ \frac{2 \sqrt{\omega} +c }{2 \sqrt{\omega} -c
}}.\label{mass:tw}
\end{align}
This implies
\begin{align*}
\lim_{4\omega >c^2, (\omega,c)\rightarrow(1,2)} \left\| e^{i\omega t
} \varphi_{\omega,c}(x-ct)\right\|_{L^2} = \sqrt{4 \pi}.
\end{align*}

As for $(\omega,c)=(1,0)$, the role of the momentum in
\eqref{V:inner para} disappears. In addition, we have $E\left( e^{it
} \varphi_{1,0}\right)=0$ and $\left\| e^{it }
\varphi_{1,0}(x)\right\|_{L^2}=\sqrt{2\pi}$, which corresponds to
the condition \eqref{DNLS} and sharp Gagliardo-Nirenberg inequality
in \cite{Wein}. As for $  4\omega > c^2 $, we have
$E\left(e^{i\omega t } \varphi_{\omega,c}(x-ct)\right)<0$ for $c>0$,
and $E\left(e^{i\omega t } \varphi_{\omega,c}(x-ct)\right)>0$ for
$c<0$, Colin and Ohta prove its stability by the variational
method (the concentration compactness argument) in
\cite{ColinOhta-DNLS}. For the special case $4\omega > c^2 $ with
$c<0$, we can refer to \cite{GuoWu:orbitalStab}.

As shown above,  $e^{it } \varphi_{1,0}$, which corresponds to
$(\omega,c)=(1,0)$, is not the unique solitary wave of
\eqref{DNLS}, up to the phase rotation and spatial translation
symmetries. In \cite{Wu-DNLS}, Wu showed that there exists a small
$\epsilon_*>0$, such that the solution $u$ of \eqref{DNLS} globally
exists under the condition \[ \|u_0\|_{L^2} <
\big\|\varphi_{1,0}\big\|_{L^2} + \epsilon_* = \sqrt{2 \pi} +
\epsilon_*.\]

It is the aim to characterize the solitary waves and show its role in the long time
analysis of solution for \eqref{DNLS} from the point of view in \cite{PaySatt:Instab}. In order to do so, we firstly
give the variational characterization of solitary waves.  Now, we
consider the solitary solutions for \eqref{DNLS} with the following
form:
   \begin{equation*}
     u\sts{t,x}=e^{\i \omega t}\varphi_{\omega,c}\sts{x-c t}.
   \end{equation*}
    It is easy to verify that $\varphi_{\omega,c}$ satisfies
    \begin{align}\label{V:EI:bad}
       \omega\varphi -\partial^2_x \varphi
        -\frac{3}{16}\abs{\varphi}^4\varphi=-\i c\partial_x \varphi + \frac{1}2 \i  |\varphi|^2\partial_x\varphi -\frac{1}2 \i \varphi^2\partial_x
        \bar{\varphi}.
    \end{align}

Note that the term $\displaystyle -c\i \partial_x \varphi +
\frac{i}2 |\varphi|^2\partial_x\varphi -\frac{i}2
\varphi^2\partial_x
        \bar{\varphi}$
is not compatible with the momentum.  While, after the key structure analysis of solution in Section
\ref{sect:structure:compactness}, we find that
        \eqref{V:EI:bad} is equivalent to the following
\begin{equation}\label{V:EI-b}
 \omega\varphi -\partial^2_x\varphi  -\frac{3}{16}\abs{\varphi}^4\varphi=-\i c\partial_x \varphi
 -\frac{c}{2}
 \abs{\varphi}^2\varphi,
\end{equation}
which is compatible with the mass, momentum and energy, and the
solution of \eqref{V:EI-b} is the critical point of
\begin{align}\label{Funct:J}J_{\omega,c}(\varphi):=E(\varphi)+\omega M(\varphi)+c P(\varphi)
\end{align} in $H^1(\R)$. More precisely, we have

\begin{theorem}\label{Thm:Existence of TW}Let\footnote{For the subcritical parameters $4\omega>c^2$, $\phi_{\omega,c}(x)$ decays exponentially, while for the critical
parameters $4\omega=c^2, c>0$, $\phi_{\omega,c}(x)$ decays
polynomially.}
\begin{equation*}
\phi_{\omega,c}(x)= \left\lbrace \aligned
\left[\frac{\sqrt{\omega}}{4\omega-c^2}
\left\{\cosh\big(\sqrt{4\omega-c^2}x\big)-\frac{c}{2\sqrt{\omega}}\right\}\right]^{-1/2},
& 4\omega > c^2,
\\
 2\sqrt{c}\cdot \left(c^2x^2+1\right)^{-1/2}, & 4\omega=c^2, c>0.
\endaligned
\right.
\end{equation*}
Then the following results hold
\begin{enumerate}
\item For the subcritical case $4\omega>c^2$. $\varphi_{\omega,c}(x)=e^{i\frac{c}{2}x} \phi_{\omega,c}(x) $ is a unique
solution of \eqref{V:EI:bad} in $H^1(\R, \C)$, up to the phase
rotation and spatial translation symmetries of \eqref{V:EI:bad}.

\item For the critical case $4\omega=c^2, c>0$. $\varphi_{\omega,c}(x)=e^{i\frac{c}{2}x} \phi_{\omega,c}(x)$ is a unique
solution of \eqref{V:EI:bad} in $H^1(\R, \C)$, up to the phase
rotation and spatial translation symmetries of \eqref{V:EI:bad}.

\item  For the critical case $4\omega=c^2, c\leq0$ and the supercritical case $4\omega < c^2$. \eqref{V:EI:bad}
has no nontrivial solution in $H^1(\R, \C)$.
\end{enumerate}
\end{theorem}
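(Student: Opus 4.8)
The plan is to reduce the ``bad'' stationary equation \eqref{V:EI:bad}, whose right-hand side carries the derivative nonlinearity, to a semilinear real ODE that can be analyzed completely by its first integral. First I would invoke the structure analysis of Section \ref{sect:structure:compactness}, which the excerpt already establishes, to replace \eqref{V:EI:bad} by the equivalent equation \eqref{V:EI-b}. Next I apply the gauge substitution $\varphi(x)=e^{\i\frac{c}{2}x}\psi(x)$; using $\partial_x\varphi=e^{\i\frac c2 x}\big(\partial_x\psi+\tfrac{\i c}{2}\psi\big)$, a direct computation shows that the first-order term $-\i c\,\partial_x\varphi$ cancels, the frequency is shifted from $\omega$ to $\beta:=\omega-\frac{c^2}{4}$, and \eqref{V:EI-b} becomes the autonomous, gauge-free equation
\begin{equation*}
-\partial_x^2\psi+\beta\,\psi=\tfrac{3}{16}|\psi|^4\psi-\tfrac c2|\psi|^2\psi ,
\end{equation*}
which for real $\psi=\phi$ is exactly \eqref{V:inner para}.

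Second, I would show that every $H^1$ solution $\psi$ of this equation is a constant phase times a real profile. Elliptic regularity bootstraps $\psi\in H^1\hookrightarrow L^\infty$ into $\psi\in H^2\hookrightarrow C^1_0(\R)$, so $\psi$ and $\partial_x\psi$ decay at infinity (no sign condition on $\beta$ is needed here). Writing $\psi=u+\i v$, both $u$ and $v$ solve the \emph{same} scalar linear equation $-w''+(\beta-F)w=0$ with the common real coefficient $F:=\frac{3}{16}|\psi|^4-\frac c2|\psi|^2$. Since $\frac{d}{dx}\,\mathrm{Im}(\bar\psi\,\partial_x\psi)=\mathrm{Im}(\bar\psi\,\partial_x^2\psi)=(\beta-F)\,\mathrm{Im}|\psi|^2=0$, the decay fixes the vanishing Wronskian $uv'-u'v\equiv 0$; hence $u,v$ are linearly dependent and $\psi=e^{\i\gamma}\phi$ with $\phi$ real. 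Thus the phase rotation symmetry reduces the whole problem to the real ODE for $\phi$, the residual freedom being precisely the phase $\gamma$ and a spatial translation.

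Third, I would settle existence, uniqueness, and nonexistence through the first integral of the real equation. Multiplying by $\phi'$, integrating, and using decay to kill the constant gives $(\phi')^2=\phi^2\,P(\phi^2)$ with $P(s)=\beta+\frac c4 s-\frac1{16}s^2$. A nontrivial solution cannot vanish, since a zero would force $\phi'=0$ by the first integral and then $\phi\equiv0$ by Cauchy--Lipschitz uniqueness; so $\phi$ has constant sign and, after a phase rotation by $\pi$ if necessary, may be taken positive. In the subcritical case $\beta>0$ the origin is a hyperbolic saddle and $P$ has the unique positive root $s_{\max}=2c+4\sqrt\omega$, producing a single homoclinic loop realized by the explicit profile \eqref{solt:sub}; uniqueness up to translation follows because any solution is even about its unique maximum point $x_0$ (where $\phi'(x_0)=0$, $\phi(x_0)=\sqrt{s_{\max}}$) by IVP uniqueness at the turning point. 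The critical case $\beta=0,\ c>0$ is analogous with $s_{\max}=4c$, $\phi_{\max}=2\sqrt c$, and polynomial decay matching $\phi_{\omega,c}=2\sqrt c\,(c^2x^2+1)^{-1/2}$. For $\beta=0,\ c\le0$ and for $\beta<0$ one has $P(s)<0$ for all small $s>0$ (indeed for all $s>0$ when $c\le0,\ \beta\le0$), so $(\phi')^2<0$ along any nontrivial decaying profile, forcing $\phi\equiv0$ and yielding the nonexistence statement.

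Granting the structure analysis, the main obstacle is the passage from ``a solution of an ODE'' to ``\emph{the} solution, up to the two symmetries''. The two delicate points are (i) justifying that the Wronskian constant vanishes, which requires the $H^2$-decay rather than mere $H^1$ membership, and (ii) upgrading the first-integral identity to genuine uniqueness: one must verify that a positive decaying solution has a single maximum, is strictly monotone on each side, and is symmetric about that maximum, so that the Cauchy--Lipschitz argument at $\{\phi'=0\}$ truly pins down the orbit and matches it to \eqref{solt:sub}. Once this is in place, the sign analysis of $P(s)$ mechanically separates the four parameter regimes.
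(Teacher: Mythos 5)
Your proposal is correct, and it takes a genuinely different route from the paper --- in fact precisely the ODE route that the authors acknowledge and decline in the remark following Theorem \ref{Thm:Existence of TW}. Both arguments pivot on the same structure lemma (Lemma \ref{lem:structure}, quoted from \cite{ColinOhta-DNLS}), which reduces the quasilinear equation \eqref{V:EI:bad} to the semilinear equation \eqref{Eq:phi:II}; after that they diverge. The paper proves existence variationally: it minimizes $J_{\omega,c}$ on the Nehari constraint $K_{\omega,c}=0$ in the gauged spaces $\th$ (subcritical) and $X_c$ (critical, where the quadratic part loses coercivity in $H^1$), using Schwarz rearrangement and the radial compactness Lemma \ref{lem:compact}, shows the Lagrange multiplier vanishes, and only then invokes the ODE uniqueness of Theorem 8.1.6 in \cite{Caz:NLS:book}; nonexistence is quoted from Theorem 5 of \cite{BerestLions:NLS:GS}. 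You instead settle existence, uniqueness and nonexistence all at once from the first integral $(\phi')^2=\phi^2P(\phi^2)$ with $P(s)=\beta+\tfrac{c}{4}s-\tfrac{1}{16}s^2$, $\beta=\omega-\tfrac{c^2}{4}$: the gauge computation, the Wronskian reduction to a real profile, positivity, symmetry about the unique maximum, and the matching to the explicit profiles are all sound, and your numerology checks out ($s_{\max}=2c+4\sqrt{\omega}$ equals $\phi_{\omega,c}(0)^2$ from \eqref{solt:sub}, and $s_{\max}=4c$, $\phi_{\max}=2\sqrt{c}$ in the critical case, where the double root of $P$ at $s=0$ accounts for the polynomial decay). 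Your route is more elementary and self-contained --- it reproves, rather than cites, both Cazenave's uniqueness and the one-dimensional Berestycki--Lions nonexistence --- and it treats the subcritical and critical regimes uniformly. What it does not produce is the variational characterization of $J^0_{\omega,c}$ as the constrained minimum \eqref{Sub:mini:eq:struct}/\eqref{Crit:mini:eq:struct}; that byproduct, superfluous for Theorem \ref{Thm:Existence of TW} itself, is exactly what the paper's longer proof buys, since it defines the invariant sets $\KKK^{\pm}_{\omega,c}$ and drives the global existence result of Theorem \ref{Thm:GWP}.

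One point in your nonexistence step should be stated more carefully. In the supercritical case with $c>0$ and $\omega>0$, $P$ is \emph{not} negative for all $s>0$: it is positive on the band $\left(2c-4\sqrt{\omega},\,2c+4\sqrt{\omega}\right)$, whose left endpoint is positive since $c>2\sqrt{\omega}$. So the blanket clause ``$(\phi')^2<0$ along any nontrivial decaying profile'' needs the decay made explicit: since $\beta<0$, one has $P(s)<0$ for all sufficiently small $s>0$; a nontrivial $H^1$ solution tends to $0$ at infinity, hence by continuity and the intermediate value theorem attains arbitrarily small nonzero values, and at any such point the first integral yields $(\phi'(x))^2<0$, the desired contradiction. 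With that sentence added --- together with your own observation that the vanishing of the integration constant uses the decay of $\phi'$ supplied by the $H^2$ bootstrap --- the proof is complete.
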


\begin{remark}
We make some remarks on the above result.
\begin{enumerate}
\item We have the following pointwise convergence.
\begin{align*}
\varphi_{\omega,c}(x) \rightarrow &\; 0\;\; \text{as}\;\; 4\omega
> c^2, (\omega,c) \rightarrow (1,-2),\\
\varphi_{\omega,c}(x) \rightarrow &\; 2\sqrt{2}e^{ix}/
\left(4x^2+1\right)^{1/2}\;\; \text{as}\;\; 4\omega
> c^2, (\omega,c) \rightarrow (1,2).
\end{align*}


\item After the structure analysis (Lemma \ref{lem:structure}), we can also obtain the existence of the solution for \eqref{V:inner para} by ODE argument, (See [Theorem 5, \cite{BerestLions:NLS:GS}]). Here we use the variational argument to show the existence of solution for \eqref{V:EI:bad} (which is equivalent to the existence of solution for \eqref{V:inner para} by the structure analysis), its advantage is that we can show the global existence of the energy solution for \eqref{DNLS} in some invariant set $\mathcal{K}^{+}_{\omega,c}$ in Theorem \ref{Thm:GWP} by the local wellposedness result and the variational argument.

\item The variational characterization of the solitary waves with $4\omega > c^2$ in
\cite{ColinOhta-DNLS} doesn't work for the critical case $4\omega =
c^2, c>0$ as well, we can refer to Lemma 7 in \cite{ColinOhta-DNLS}.
Here we use the structure analysis of solution to show the
equivalence of nontrivial solution between \eqref{V:EI:bad} and
\eqref{V:EI-b}. After showing this property, it is easy to use the
variational method \cite{AmbMal, Willem:MinMax} to show the
existence of $\varphi_{\omega, c}$ to \eqref{V:EI-b} in $X_c$ space
with structure\footnote{See definition in \eqref{Def:space_crit}}. The uniqueness (up to the phase rotation and spatial
translation symmetries)\footnote{The uniqueness is obtained by the
standard ODE argument due to one dimensional spatial variable.} and
$H^1$ regularity of the solitary wave imply the existence and
uniqueness of the minimizer of the variational problem in the energy
space.

\item About the stability of the sum of two solitary waves of \eqref{DNLS} with subcritical parameters in the energy space, we can refer to \cite{MTXu-DNLS-TWSta}, which is obtained by the linearized argument, modulational stability analysis and the energy method. Recently, we have learned that the stability of the sum of k solitary waves of (DNLS) has been obtained independently by Le Coz and Wu \cite{LeW}.
\end{enumerate}
\end{remark}

Secondly, we can consider the role of the solitary waves $e^{\i \omega
t}\varphi_{\omega,c}\sts{x-ct}$ in the long time analysis of solution to
\eqref{DNLS}. We can refer to \cite{NakSchlag:book, PaySatt:Instab}. For the subcritical case $4\omega > c^2$ or the
critical case $4\omega = c^2$ with $c>0$,  we let
$J^0_{\omega,c}=J_{\omega,c}\left(\varphi_{\omega,c}\right)$, and
introduce the functional $K_{\omega,c}\sts{\varphi}$, which is the
invariant quantity of solutions to \eqref{V:EI-b}
\begin{align*}
K_{\omega,c}\sts{\varphi}:=
   \int\sts{\abs{\varphi_x}^2-\frac{3}{16}\abs{\varphi}^6+\omega\abs{\varphi}^2  - c\Im\sts{\overline{\varphi}\varphi_x}+\frac{c}{2}\abs{\varphi}^4
  }dx,
\end{align*} and two subsets
in the energy space $H^1$
 \begin{align*}
\KKK^+_{\omega,c} : = & \left\{\varphi\in H^1:
J_{\omega,c}\big(\varphi\big) < J^0_{\omega,c} , \;\;
K_{\omega,c}\big(
\varphi \big) \geq 0\right\},   \\
\KKK^-_{\omega,c} : = & \left\{\varphi\in H^1:
J_{\omega,c}\big(\varphi\big) < J^0_{\omega,c}, \;\;
K_{\omega,c}\big(\varphi \big) < 0\right\}.
 \end{align*}

As a consequence of the variational characterization of the solitary
waves and the local well-posedness theory to \eqref{DNLS}, we have

\begin{theorem}\label{Thm:GWP}The following results hold
\begin{enumerate}
\item For $4\omega>c^2$ or $4\omega=c^2$ with $c>0$,  we have $\KKK^{\pm}_{\omega,c} \neq \emptyset$,
and they are invariant sets under the flow of \eqref{DNLS} in
$H^1(\R, \C)$.
\item Let $u(0)\in H^1 $, and $u$ be the solution of \eqref{DNLS}
with initial data $u(0)$ and $I$ be its maximal interval of
existence. Then if $u(0) \in \KKK^{+}_{\omega,c}$ for some $(\omega,
c)$ with $4\omega>c^2$ or $4\omega=c^2, c>0$, then $I=\R$.
\end{enumerate}
\end{theorem}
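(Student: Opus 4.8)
The plan is to prove Theorem~\ref{Thm:GWP} by combining the variational characterization of the solitary wave encoded in $J^0_{\omega,c}$ with the conservation laws from Theorem~\ref{Thm:LWP} and a continuity/trapping argument. First I would establish the nonemptiness and invariance in part (1). For nonemptiness: rescaling the ground state $\varphi_{\omega,c}$ by $\varphi^\lambda := \lambda \varphi_{\omega,c}(\lambda^2 \cdot)$ (the $L^2$-critical scaling, so $M$ is fixed), one computes $J_{\omega,c}(\varphi^\lambda)$ as a function of $\lambda$; since $\varphi_{\omega,c}$ is a critical point one expects $\lambda=1$ to be a maximum of this slice with $K_{\omega,c}(\varphi^\lambda) = \tfrac{d}{d\lambda}\big|_{\dots}$ proportional to the derivative, so perturbing $\lambda$ slightly below and above $1$ produces elements of $\KKK^+_{\omega,c}$ and $\KKK^-_{\omega,c}$ respectively with $J < J^0_{\omega,c}$. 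For invariance: since $J_{\omega,c}(u) = E(u) + \omega M(u) + c P(u)$ is conserved along the flow by the conservation of mass, momentum and energy, the level condition $J_{\omega,c}(u(t)) < J^0_{\omega,c}$ is automatically preserved; the nontrivial content is that the sign condition on $K_{\omega,c}$ cannot flip. The key fact here is a variational lemma stating that $J^0_{\omega,c}$ is the minimum of $J_{\omega,c}$ over the \emph{nonzero} zero set $\{K_{\omega,c}=0\}\setminus\{0\}$, so that if some $u(t_0) \in \KKK^+_{\omega,c}$ evolved to have $K_{\omega,c}(u(t_1)) < 0$, then by continuity of $t \mapsto K_{\omega,c}(u(t))$ (which holds since $u \in C(I,H^1)$ and $K_{\omega,c}$ is continuous on $H^1$) there is an intermediate time with $K_{\omega,c}(u(t_*)) = 0$ and $u(t_*) \neq 0$, forcing $J_{\omega,c}(u(t_*)) \geq J^0_{\omega,c}$, contradicting the conserved strict inequality $J_{\omega,c} < J^0_{\omega,c}$. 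This closes invariance of $\KKK^+_{\omega,c}$, and symmetrically of $\KKK^-_{\omega,c}$.

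For part (2), the global existence, I would show that membership in $\KKK^+_{\omega,c}$ yields a uniform a priori bound on $\|\partial_x u(t)\|_{L^2}$, which by the blow-up alternative in Theorem~\ref{Thm:LWP}(3) rules out finite-time blow-up and forces $I = \R$. The mechanism is a coercivity estimate: on $\KKK^+_{\omega,c}$, where $K_{\omega,c}(u) \geq 0$ and $J_{\omega,c}(u) < J^0_{\omega,c}$, one extracts a lower bound on a positive-definite quadratic-type quantity controlling $\|\partial_x u\|_{L^2}^2$. Concretely I would split $J_{\omega,c}$ as a convex combination of $K_{\omega,c}$ (which is $\geq 0$ and hence discardable) and a remainder that is comparable to $\|\partial_x u\|_{L^2}^2$ plus controlled lower-order mass terms; completing the square using the structure of $J_{\omega,c}$ and $K_{\omega,c}$, together with the sharp Gagliardo--Nirenberg inequality used to handle the $L^6$ term, should give $\|\partial_x u(t)\|_{L^2}^2 \lesssim J^0_{\omega,c}$ uniformly in $t \in I$. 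Since the $L^2$ norm (mass) is conserved, this bounds the full $H^1$ norm and excludes the blow-up scenario.

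The main obstacle I expect is the variational lemma underlying invariance, namely the identification $J^0_{\omega,c} = \inf\{J_{\omega,c}(\varphi) : \varphi \in H^1\setminus\{0\},\; K_{\omega,c}(\varphi) = 0\}$ together with the coercivity it provides. This is precisely where the equivalence between \eqref{V:EI:bad} and \eqref{V:EI-b} (via the structure analysis in Section~\ref{sect:structure:compactness}) and the uniqueness part of Theorem~\ref{Thm:Existence of TW} enter: one needs that the constrained minimizer is exactly the solitary wave $\varphi_{\omega,c}$ (up to symmetries) so that the infimum equals $J^0_{\omega,c}$, and that the minimization is attained and rigid. The critical case $4\omega = c^2, c>0$ is the delicate one because $\phi_{\omega,c}$ decays only polynomially, so the usual concentration-compactness recovery of the minimizer requires extra care; I would handle this by working in the weighted space $X_c$ with structure (as flagged in the third remark) and transferring the conclusion back to $H^1$ via the regularity statement in Theorem~\ref{Thm:Existence of TW}. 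Once this variational characterization and its coercivity consequence are in hand, both the invariance (continuity/intermediate-value trapping) and the global bound (blow-up alternative) follow in a routine manner.
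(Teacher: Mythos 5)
Your overall architecture --- conserved $J_{\omega,c}$ plus sign-trapping of $K_{\omega,c}$ for invariance, then coercivity plus the blow-up alternative for part (2) --- is the paper's, and both your intermediate-value invariance argument and your ``discard a nonnegative multiple of $K_{\omega,c}$'' coercivity are essentially what the paper does (its functional $H_{\omega,c}=J_{\omega,c}-\tfrac14 K_{\omega,c}$ in \eqref{V:H}, resp.\ $J_{\omega,c}-\tfrac16 K_{\omega,c}$ in \eqref{Funct:H:crit}, is exactly your convex-combination remainder; compare also \eqref{kbdd}). But your nonemptiness construction for $\KKK^{\pm}_{\omega,c}$ has a genuine gap. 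The functional $K_{\omega,c}$ is the \emph{Nehari} derivative, $K_{\omega,c}(\varphi)=\frac{d}{d\lambda}J_{\omega,c}(\lambda\varphi)\big|_{\lambda=1}$, i.e.\ the derivative along amplitude scaling; it is \emph{not} proportional to the derivative of $J_{\omega,c}$ along your $L^2$-critical slice $\varphi^{\mu}=\mu^{1/2}\varphi_{\omega,c}(\mu\,\cdot)$. Along that slice one computes $J_{\omega,c}(\varphi^{\mu})=\mu^{2}E(\varphi_{\omega,c})+\mu\, c\,P(\varphi_{\omega,c})+\omega M(\varphi_{\omega,c})$, and since criticality of $\varphi_{\omega,c}$ forces $2E+cP=0$, in fact $J_{\omega,c}(\varphi^{\mu})=J^{0}_{\omega,c}+E(\varphi_{\omega,c})(\mu-1)^{2}$. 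So $\mu=1$ is a strict maximum only when $E(\varphi_{\omega,c})<0$, which (as recorded in the introduction) holds for $c>0$ but fails for subcritical parameters with $c\le 0$: for $c<0$ one has $E(\varphi_{\omega,c})>0$, so your slice lies \emph{above} $J^{0}_{\omega,c}$ and never enters $\{J_{\omega,c}<J^{0}_{\omega,c}\}$, while for $(\omega,c)=(1,0)$ it is constant. Even when $c>0$, the sign of $K_{\omega,c}(\varphi^{\mu})$ on either side of $\mu=1$ does not follow from the slice derivative, because that derivative is a virial-type quantity, not $K_{\omega,c}$. The paper avoids all of this with plain amplitude scaling: $j(\lambda)=J_{\omega,c}(\lambda\varphi_{\omega,c})$ satisfies $j(\lambda)\to 0<J^{0}_{\omega,c}=j(1)$ as $\lambda\to 0^{+}$, while $K_{\omega,c}(\lambda\varphi_{\omega,c})>0$ for small $\lambda$ by Lemmas \ref{Lem:sub:smallscal}--\ref{Lem:knear0} (resp.\ Lemmas \ref{Lem:crit:smallscal}--\ref{Lem:crit:knear0}), so $\lambda\varphi_{\omega,c}\in\KKK^{+}_{\omega,c}$ for small $\lambda$; for large $\lambda$ the sextic term dominates, making $J_{\omega,c}(\lambda\varphi_{\omega,c})\to-\infty$ and $K_{\omega,c}(\lambda\varphi_{\omega,c})<0$, so $\KKK^{-}_{\omega,c}\neq\emptyset$. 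You should replace your slice by this one.

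Two smaller points. First, drop the appeal to the sharp Gagliardo--Nirenberg inequality in part (2): bounding the negative $\tfrac{1}{32}\int\abs{u}^{6}\,dx$ term in $J_{\omega,c}$ by Gagliardo--Nirenberg would require a mass-threshold smallness that members of $\KKK^{+}_{\omega,c}$ need not satisfy; the whole point of the absorption via $K_{\omega,c}\ge 0$ (the paper's \eqref{kbdd}, or your discarded convex combination, in which the sextic enters with a \emph{positive} sign) is that no smallness is used, after which $J^{0}_{\omega,c}>\tfrac14\int\sts{\abs{\partial_x u}^{2}+\omega\abs{u}^{2}-c\,\Im\sts{\overline{u}\,\partial_x u}}dx$ together with the Cauchy inequality and mass conservation bounds $\|\partial_x u(t)\|_{L^2}$ uniformly on $I$. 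Second, your intermediate-value trapping needs two checks made explicit: that the intermediate time cannot have $u(t_{*})=0$ (if it did, mass conservation would give $u\equiv 0$, so $K_{\omega,c}$ could never have gone negative), and that $J^{0}_{\omega,c}$ really is the infimum of $J_{\omega,c}$ over $\{K_{\omega,c}=0\}$ in all of $H^{1}\setminus\{0\}$ --- true in the subcritical case because $\th$ coincides with $H^{1}$ as a set, and in the critical case by the transfer statement \eqref{crit:mini:energy space}. The paper instead proves trapping quantitatively: Lemma \ref{lem:Kunibound} gives a strictly positive lower bound for $K_{\omega,c}$ on $\KKK^{+}_{\omega,c}$, which yields openness of $\{t\in I:\ u(t)\in\KKK^{+}_{\omega,c}\}$, and closedness follows from the conservation laws; either route is fine once the variational characterization is in place.
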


\begin{remark}
\begin{enumerate}
\item For sufficiently small $\epsilon>0$, we have $B_{\epsilon}(0)\subseteq
\KKK^{+}_{\omega,c}$ for $4\omega>c^2$ or $4\omega=c^2$ with $c>0$.

\item
What will happen for the solution of \eqref{DNLS} with initial data in $\KKK^+_{\omega, c}$? In fact,
for $4\omega_1 > c_1^2$ with $| (\omega_1,c_1) - (1,-2)|\ll 1 $ and
$4\omega_2  \geq c_2^2 $ with $| (\omega_2,c_2) - (1,2)|\ll 1 $, we
have $M(\varphi_{\omega_1, c_1})+
\frac{c_1}{2}P(\varphi_{\omega_1,c_1})\ll 1$ and
\[\varphi_{\omega_1,c_1} \in \KKK^+_{\omega_2, c_2}, \]
which means that there are infinity many small solitary/traveling waves in
$\KKK^{+}_{\omega_2, c_2}$, therefore the scattering result for
\eqref{DNLS} with initial data in $\KKK^{+}_{\omega_2, c_2}$ doesn't hold any more (See Figure 1). This is a significant difference with the $L^2$-critical NLS in \cite{Dod:NLS_sct}.

\item For $4\omega>c^2$ or $4\omega=c^2$ with $c>0$,
 we have no long time analysis for solutions with initial data in
$\KKK^-_{\omega,c}$  since there is no effective Virial identity for the energy solution of \eqref{DNLS}.

\item In \cite{FukHI:gDNLS}, Fukaya, Hayashi and Inui  obtained the
analogous global result for the generalized derivative nonlinear Schr\"{o}dinger equation a few months after we submitted our paper.
\end{enumerate}
\end{remark}

\begin{center}
 \begin{tikzpicture}[scale=0.5]
    \begin{scope}[xshift=-4cm,yshift=-4cm]
        \begin{scope}[nearly opaque,fill=green!0!white,line width=1pt]
            \filldraw
             (-5,0) .. controls(-3,4) and (3,4) .. (5,0);
            \filldraw
             (-5,0) .. controls(-3,-4) and (3,-4) .. (5,0);
            \draw[loosely dashed](-9,3) to [out=-15,in= 115] (-5,0) to [out=-115,in=0] (-9,-3);
            \draw[loosely dashed](9,3) node[right] {$\scriptstyle J_{1,2}=J_{1,2}^0$} to [out=-165,in=65](5,0) to
                [out=-65,in=165] (9,-3);
            \draw[densely dashed] (-4,-5)
                to [out=125,in=-95] (-5,0) to [out=85,in=-125](-4,5);
            \draw[densely dashed] (4,-5)node[below] {$\scriptstyle K_{1,2}=0$}
                to [out=55,in=-95] (5,0) to [out=95,in=-55](4,5);
            \draw[opaque] (8,-0.5) node {$\scriptstyle J_{1,2}<J_{1,2}^0$};
            \draw[opaque] (7.8,-1.5) node {$\scriptstyle K_{1,2}<0$};
            \draw[opaque] (3.5,-0.5) node {$\scriptstyle J_{1,2}<J_{1,2}^0$};
            \draw[opaque] (3.2,-1.5) node {$\scriptstyle K_{1,2}>0$};
            \fill[opaque,black] (5,0) circle (.07cm)node[right] {$\varphi_{1,2}$};
        \end{scope}
        \begin{scope}[semitransparent,blue!60!white,rotate=60,scale=0.8,line width=1pt]
            \filldraw
             (-5,0) .. controls(-3,4) and (3,4) .. (5,0);
            \filldraw
             (-5,0) .. controls(-3,-4) and (3,-4) .. (5,0);
            \draw[loosely dashed]
                (-9,3) to [out=-15,in= 115] (-5,0) to [out=-115,in=0] (-9,-3);
            \draw[loosely dashed] (9,3) to [out=-165,in=65](5,0) to [out=-65,in=165] (9,-3);
            \fill[opaque,color=blue] (5,0) circle (.1cm)node[above] {$Q$};
        \end{scope}
        \begin{scope}[semitransparent,red!80!white,line width=1pt,rotate=120,scale=0.6425]
            \filldraw
             (-5,0) .. controls(-3,4) and (3,4) .. (5,0);
            \filldraw
             (-5,0) .. controls(-3,-4) and (3,-4) .. (5,0);
            \draw[loosely dashed]
                (-9,3) to [out=-15,in= 115] (-5,0) to [out=-115,in=0] (-9,-3);
            \draw[loosely dashed] (9,3) to [out=-165,in=65](5,0) to [out=-65,in=165] (9,-3);
            \fill[opaque,red] (5,0) circle (0.12cm)node[above left] {$A$};
        \end{scope}
        \begin{scope}[nearly transparent,black,line width=1pt,rotate=200,scale=0.5]
            \filldraw
             (-5,0) .. controls(-3,4) and (3,4) .. (5,0);
            \filldraw
             (-5,0) .. controls(-3,-4) and (3,-4) .. (5,0);
            \draw[loosely dashed]
                (-9,3) to [out=-15,in= 115] (-5,0) to [out=-115,in=0] (-9,-3);
            \draw[loosely dashed]
                (9,3) to [out=-165,in=65](5,0) to [out=-65,in=165] (9,-3);
            \fill[opaque,color=black] (5,0) circle (.12cm);
        \end{scope}
        \filldraw[fill=white] (0,0) circle(0.06cm) node[right]{$0$};
        \node [below=4cm, align=flush center,text width=8cm,black] at (0,0)
        {Figure 1: $\varphi_{\omega,c}$ where $Q=\varphi_{1,0}, A\approx \varphi_{1,-1.1}$
        };
    \end{scope}
        \begin{scope}[xshift=5cm, yshift = 1cm, scale=0.7]
        \draw[->] (-4,0) -- (4,0) node[right] {$c$};
        \draw[->] (0,0) -- (0,4.2);
        \draw (0,4.1) node[above right] {$\omega$};
        \draw[domain=0:4,color=blue] plot (\x,0.25*\x*\x) ;
        \draw[domain=-4:0,color=blue,dashed] plot (\x,0.25*\x*\x);
        \draw[color=green!50!red,line width=2pt] (-3,0.25*3*3)--(3,0.25*3*3);

        \fill[semitransparent, domain=0:4,color=blue!40!white] plot (\x,0.25*\x*\x) ;
        \fill[semitransparent, color=blue!40!white] (0,0)--(0,4)--(4,4);
         \fill[semitransparent, domain=-4:0,color=blue!40!white] plot (\x,0.25*\x*\x) ;
        \fill[semitransparent, color=blue!40!white] (0,0)--(0,4)--(-4,4);

        \filldraw (3,0.25*3*3) circle (.1cm) node [right] {$\varphi_{1,2}$};
        \filldraw[fill=white] (-3,0.25*3*3) circle (.1cm) node [left] {$0$};
        \filldraw[blue] (0,0.25*3*3) circle (0.1cm) node [below right] {$Q$};
        \filldraw[red] (-1.5,0.25*3*3) circle (0.1cm) node [above] {$A$};
        \filldraw (-2.2,0.25*3*3) circle (0.08cm) ;

        \draw (0,0) node[below] {$4\omega=c^{2}$};
    \end{scope}
    \end{tikzpicture}
    \end{center}

As a corollary, we can obtain the following result (See also Corollary 1.4 in \cite{FukHI:gDNLS}).
\begin{corollary}\label{cor:gwp} Let $u(0,\cdot)=u_0(\cdot)\in H^1(\R) $, and satisfy one of the following conditions
 \begin{enumerate}
 \item $M(u_0)<2\pi$,

 \item $M(u_0)=2\pi$ and $P(u_0)<0$,

 \item $M(u_0)=2\pi$ and $P(u_0)=0$ and $E(u_0)<0$.
 \end{enumerate}
Then the solution to \eqref{DNLS} exists globally  in $H^1(\R)$.
\end{corollary}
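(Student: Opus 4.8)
The plan is to deduce the corollary from Theorem~\ref{Thm:GWP}(2): for each admissible $u_0$ I will produce critical parameters $(\omega,c)=(c^2/4,\,c)$ with $c>0$ large enough that $u_0\in\KKK^+_{\omega,c}$, after which global existence is immediate. Everything thus reduces to checking the two defining conditions $J_{\omega,c}(u_0)<J^0_{\omega,c}$ and $K_{\omega,c}(u_0)\ge 0$ along the critical parabola $4\omega=c^2$ as $c\to+\infty$. (If $u_0\equiv 0$ the solution is trivially global, so assume $u_0\ne 0$.)

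First I would evaluate the threshold $J^0_{\omega,c}=J_{\omega,c}(\varphi_{\omega,c})$ on the critical line. Direct integration of the explicit profile $\phi_{\omega,c}(x)=2\sqrt{c}\,(c^2x^2+1)^{-1/2}$ from Theorem~\ref{Thm:Existence of TW} gives the closed forms $\int\phi_{\omega,c}^2=4\pi$, $\int\phi_{\omega,c}^4=8\pi c$, $\int\phi_{\omega,c}^6=24\pi c^2$ and $\int(\partial_x\phi_{\omega,c})^2=\tfrac{\pi c^2}{2}$. Plugging these into the definitions, and using $\varphi_{\omega,c}=e^{icx/2}\phi_{\omega,c}$, one finds that the critical soliton satisfies $M(\varphi_{\omega,c})=2\pi$ and $E(\varphi_{\omega,c})=P(\varphi_{\omega,c})=0$, so that
\[
J^0_{\omega,c}=\omega\,M(\varphi_{\omega,c})=\tfrac{c^2}{4}\cdot 2\pi=\tfrac{\pi c^2}{2}.
\]

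Next I would substitute $u_0$ into both functionals along $\omega=c^2/4$. Using $\int|u_0|^2=2M(u_0)$ and the value of $J^0_{\omega,c}$ above, a direct expansion yields
\[
J_{\omega,c}(u_0)-J^0_{\omega,c}=E(u_0)+c\,P(u_0)+\tfrac{c^2}{4}\bigl(M(u_0)-2\pi\bigr),
\]
while
\[
K_{\omega,c}(u_0)=\tfrac{c^2}{2}M(u_0)-c\,\Im\!\int\overline{u_0}\,\partial_x u_0+\tfrac{c}{2}\!\int|u_0|^4+\int|\partial_x u_0|^2-\tfrac{3}{16}\!\int|u_0|^6.
\]
Since $M(u_0)>0$, the leading coefficient $\tfrac{c^2}{2}M(u_0)$ of $K_{\omega,c}(u_0)$ is positive, so $K_{\omega,c}(u_0)>0$ once $c$ is large; it therefore remains to make $J_{\omega,c}(u_0)-J^0_{\omega,c}<0$ in each of the three cases.

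Finally I would dispatch the cases using the displayed identity for $J-J^0$. In case~(1), $M(u_0)-2\pi<0$, so the $O(c^2)$ term dominates and is negative, forcing $J_{\omega,c}(u_0)<J^0_{\omega,c}$ for $c$ large. In case~(2) the $O(c^2)$ term vanishes and $c\,P(u_0)\to-\infty$ provides the negativity. In case~(3) both the $O(c^2)$ and $O(c)$ terms drop out and one is left with $E(u_0)<0$, valid for every $c$. Choosing $c$ large enough to enforce $K_{\omega,c}(u_0)\ge 0$ and $J_{\omega,c}(u_0)<J^0_{\omega,c}$ simultaneously places $u_0\in\KKK^+_{\omega,c}$, and Theorem~\ref{Thm:GWP}(2) then gives $I=\R$. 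I expect the only genuine work to be the first step: the exact evaluation $J^0_{\omega,c}=\tfrac{\pi c^2}{2}$ together with $M(\varphi_{\omega,c})=2\pi$ is what produces the precise cancellation of $\tfrac{c^2}{4}M(u_0)$ against $J^0_{\omega,c}$ at the threshold $M(u_0)=2\pi$, and it is exactly this cancellation that makes the sharp mass bound and the sign conditions on $P$ and $E$ in cases~(2)--(3) emerge.
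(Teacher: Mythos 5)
Your proposal is correct and follows essentially the same route as the paper's own proof: the paper likewise takes $(\omega,c)=(c^2/4,c)$ with $c\gg 1$ on the critical parabola, uses $J^0_{c^2/4,c}=\frac{c^2}{4}\cdot 2\pi$ (equivalently $M(\varphi_{c^2/4,c})=2\pi$, $E(\varphi_{c^2/4,c})=P(\varphi_{c^2/4,c})=0$), and checks $J_{c^2/4,c}(u_0)<J^0_{c^2/4,c}$ and $K_{c^2/4,c}(u_0)>0$ to place $u_0\in\mathcal{K}^+_{c^2/4,c}$ and invoke Theorem \ref{Thm:GWP}. The only difference is that you carry out explicitly the soliton integrals and the three-case asymptotics in $c$ that the paper asserts without computation.
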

\begin{remark}
This result can improve the global result in \cite{Wu-DNLS, Wu-DNLS2}. In fact, we can show that the subset of $H^1(\R)$ with the property (3) is empty by the variational characterization of the solitary wave, this phenomena is similar as that for the $L^2$-critical NLS in \cite{Wein:singul}.
\end{remark}

Throughout this paper, we will use the following notations. The
tempered distribution is denoted by $\mathcal{S}'(\R^n)$. We use $A\lesssim B$
to denote an estimate of the form $A\leq CB$ for some constant $C$.
If $A\lesssim B$ and $B\lesssim A$, we say that $A\approx B$.

At last, this paper is organized as follows. In Section
\ref{sect:structure:compactness}, we first give the structure
analysis of solution to \eqref{V:EI:bad}, then show the variational
characterization of the solitary waves in space  $\th$ for the
subcritical parameters and $X_c$ for the critical parameters with
structure, and obtain the threshold $J^0_{\omega,c}$ in terms of
the solitary waves in Section \ref{sect:vc:subcritical} and Section
\ref{sect:vc:critical}, respectively; In Section \ref{sect:GWP}, we
make use of the variational characterization of the solitary waves and
the local wellposedness of \eqref{DNLS} to prove Theorem \ref{Thm:GWP} and Corollary \ref{cor:gwp}.

%
%
%
%

\section{Existence and nonexistence of traveling
waves}\label{sect:exit}
In this section, we firstly consider the existence of the solitary/traveling
waves for \eqref{DNLS} with the following form:
   \begin{equation*}
     u\sts{t,x}=e^{\i \omega t}\varphi_{\omega,c}\sts{x-c t}.
   \end{equation*}
    It is easy to check that $\varphi_{\omega,c}$ satisfies
    \begin{align}\label{Eq:varphi:I}
       \omega\varphi -\partial^2_x \varphi
        -\frac{3}{16}\abs{\varphi}^4\varphi=-c\i \partial_x \varphi + \frac{1}2 \i  |\varphi|^2\partial_x\varphi -\frac{1}2 \i \varphi^2\partial_x
        \bar{\varphi}.
    \end{align}

\subsection{Structure analysis, nonexistence and compactness result}
\label{sect:structure:compactness}Although the left hand side in \eqref{Eq:varphi:I} consists with the definitions of the mass and energy in \eqref{def:mass} and \eqref{def:eng}, while the right hand side
is not compatible with the definitions of the momentum in \eqref{def:mom}. This motivates us
to explore more properties about the solitary waves. Here we make use of the
structure of the solitary waves. Note that $\varphi\in
H^1(\R, \C)\setminus\ltl{0}$ is a nontrivial solution to
\eqref{Eq:varphi:I} with the structure
$\varphi\sts{x}:=e^{\i\frac{c}{2}x}\phi\sts{x}$, if and only if
$\phi\in H^1(\R, \C)\setminus\ltl{0}$ satisfies
\begin{equation}\label{Eq:phi:I}
  \sts{\omega-\frac{c^2}{4}}\phi-\partial^2_x \phi
    -\frac{3}{16}\abs{\phi}^4\phi
  = -\frac{c}{2}\abs{\phi}^2\phi
    + \frac{1}2 \i  |\phi|^2\partial_x\phi -\frac{1}2 \i
    \phi^2\partial_x  \bar{\phi}
\end{equation}
For this equation, we have

\begin{lemma}\label{lem:structure}$\phi\in H^1(\R, \C)$ is a nontrival solution to
\eqref{Eq:phi:I} if and only if $\phi\in H^1(\R, \C)$ satisfies
\begin{equation}\label{Eq:phi:II}
  \sts{\omega-\frac{c^2}{4}}\phi-\partial^2_x \phi
    -\frac{3}{16}\abs{\phi}^4\phi
  = -\frac{c}{2}\abs{\phi}^2\phi.
\end{equation}
\end{lemma}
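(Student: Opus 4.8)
The plan is to reduce both implications to a single structural fact: every $H^1(\R,\C)$ solution of \eqref{Eq:phi:I} and of \eqref{Eq:phi:II} has vanishing momentum density $p(x):=\Im\sts{\bar\phi\,\partial_x\phi}$. The starting point is the pointwise identity
\[
\frac12\i\,\abs{\phi}^2\partial_x\phi-\frac12\i\,\phi^2\partial_x\bar\phi
=-\,p\,\phi ,
\]
which follows from $\abs{\phi}^2\partial_x\phi-\phi^2\partial_x\bar\phi=\phi\sts{\bar\phi\,\partial_x\phi-\phi\,\partial_x\bar\phi}=2\i\,\phi\,\Im\sts{\bar\phi\,\partial_x\phi}$. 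Consequently the right-hand sides of \eqref{Eq:phi:I} and \eqref{Eq:phi:II} differ exactly by the term $-p\,\phi$, so the two equations are satisfied by the same $\phi$ precisely when $p\equiv0$. It therefore suffices to show that $p\equiv0$ for any $H^1$ solution of either equation.

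Before differentiating $p$ I would first upgrade the regularity of $\phi$. Since $H^1(\R)\hookrightarrow L^\infty(\R)$, each factor on the right-hand side of \eqref{Eq:phi:I} (and of \eqref{Eq:phi:II}) lies in $L^2$: the terms $\abs{\phi}^4\phi$ and $\abs{\phi}^2\phi$ are bounded multiples of $\phi\in L^2$, while $\abs{\phi}^2\partial_x\phi$ and $\phi^2\partial_x\bar\phi$ are bounded multiples of $\partial_x\phi\in L^2$. Reading the equation as $\partial_x^2\phi\in L^2$ gives $\phi\in H^2(\R)\hookrightarrow C^1(\R)$, and a routine bootstrap provides any further smoothness needed. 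In particular $p$ is absolutely continuous with $p'=\Im\sts{\bar\phi\,\partial_x^2\phi}$ almost everywhere, because $\Im\sts{\partial_x\bar\phi\,\partial_x\phi}=\Im\abs{\partial_x\phi}^2=0$.

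The heart of the argument is the conservation law $p'\equiv0$. Multiplying the equation by $\bar\phi$ and taking imaginary parts, the terms proportional to $\abs{\phi}^2$, $\abs{\phi}^4$ and $\abs{\phi}^6$ are real and disappear; for \eqref{Eq:phi:I} the extra term contributes $\bar\phi\cdot\sts{-p\,\phi}=-p\,\abs{\phi}^2$, which is again real since $p$ is real-valued. Hence $\Im\sts{\bar\phi\,\partial_x^2\phi}=0$, that is $p'\equiv0$ and $p$ is constant; for \eqref{Eq:phi:II} this is immediate since every term on the right is a real multiple of $\phi$. Finally, $\phi,\partial_x\phi\in L^2(\R)$ force $p\in L^1(\R)$, so the constant must vanish, giving $p\equiv0$. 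Feeding $p\equiv0$ back into the identity of the first paragraph makes the extra term vanish identically, so $\phi$ solves \eqref{Eq:phi:I} if and only if it solves \eqref{Eq:phi:II}, which is the assertion.

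The step I expect to be the main obstacle --- and the only one that genuinely exploits the structure of the nonlinearity --- is the conservation law $p'\equiv0$: it rests on the fact that, after pairing with $\bar\phi$, the two cubic derivative terms combine into the real quantity $-p\,\abs{\phi}^2$. Were this combination not real, $p$ would fail to be constant and the whole reduction would collapse. The remaining ingredients, namely the elliptic bootstrap and the observation that an $L^1$ constant on $\R$ must be zero, are standard one-dimensional facts.
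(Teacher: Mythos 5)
Your proof is correct. It is worth noting that the paper itself contains no argument for this lemma---its proof is the single line ``See Lemma 2 in \cite{ColinOhta-DNLS}''---so your write-up is a self-contained substitute for that citation, and it runs on exactly the mechanism underlying the cited lemma. The key identity
\begin{equation*}
\frac12\i\,\abs{\phi}^2\partial_x\phi-\frac12\i\,\phi^2\partial_x\bar\phi=-\,\Im\sts{\bar\phi\,\partial_x\phi}\,\phi
\end{equation*}
correctly reduces the equivalence of \eqref{Eq:phi:I} and \eqref{Eq:phi:II} to the vanishing of the momentum density $p=\Im\sts{\bar\phi\,\partial_x\phi}$; your pairing with $\bar\phi$ is sound, since the two cubic derivative terms contribute $\frac{\i}{2}\abs{\phi}^2\sts{\bar\phi\,\partial_x\phi-\phi\,\partial_x\bar\phi}=-p\abs{\phi}^2$, which is real, so $p'=\Im\sts{\bar\phi\,\partial_x^2\phi}=0$ follows for solutions of either equation; and the bootstrap $\phi\in H^2(\R)\hookrightarrow C^1$ legitimately justifies the pointwise manipulations (with $\phi\in H^2$, the product $\bar\phi\,\partial_x\phi$ lies in $W^{1,1}(\R)$, so $p$ is indeed absolutely continuous). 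Your $L^1$-constant argument to kill the constant is fine; equivalently, after the bootstrap one has $\partial_x\phi\in H^1$, so both $\phi$ and $\partial_x\phi$ vanish at infinity and $p\to 0$ directly. Two minor observations: the equivalence you prove holds for all $H^1$ solutions, trivial or not, so the word ``nontrivial'' in the statement plays no role in the argument; and your structure correctly delivers both implications simultaneously, since $p\equiv 0$ is established separately for solutions of each equation before the identity is fed back in.
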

\begin{proof}
See Lemma 2 in \cite{ColinOhta-DNLS}.
\end{proof}

\begin{remark}\label{rem:rvs}By the proof of Theorem 8.1.6 in \cite{Caz:NLS:book}, we know that the solution of \eqref{Eq:phi:II} can be taken the positive, even and real valued function up to a fixed phase rotation and spatial translation, from which we can take the solution $\phi$ of \eqref{Eq:phi:II} to be a real function, that is $\phi\in H^1(\R, \R)$.
\end{remark}

Now we divide $(\omega,c)\in \R^2$ into several regions.
\begin{enumerate}
\item the supercritical case: $4\omega<c^2$;
\item the critical case: $4\omega=c^2$;
\item the subcritical case: $4\omega>c^2$.
\end{enumerate}

\begin{proposition}\label{nonexist:crit case} For the supercritical case $4\omega < c^2$ and the critical case $4\omega = c^2, c\leq 0$, \eqref{Eq:varphi:I} has no nontrivial solution in $H^1(\R, \C)$.
\end{proposition}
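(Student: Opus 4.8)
The plan is to reduce the statement to a nonexistence result for the real second-order ODE \eqref{Eq:phi:II} and then rule out nontrivial decaying solutions via the first integral (quadrature) of that ODE. First I would observe that writing $\varphi(x)=e^{\i\frac{c}{2}x}\phi(x)$, equivalently $\phi(x)=e^{-\i\frac{c}{2}x}\varphi(x)$, is merely a gauge change that preserves $H^1(\R,\C)$ and (non)triviality; hence, by the structure analysis preceding \eqref{Eq:phi:I} together with Lemma \ref{lem:structure}, \eqref{Eq:varphi:I} has a nontrivial solution $\varphi$ if and only if \eqref{Eq:phi:II} has a nontrivial solution $\phi\in H^1(\R,\C)$. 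By Remark \ref{rem:rvs} we may take $\phi$ real-valued (alternatively, multiplying \eqref{Eq:phi:II} by $\bar\phi$ and taking imaginary parts gives $\tfrac{d}{dx}\Im(\bar\phi\,\partial_x\phi)=0$, so $\Im(\bar\phi\,\partial_x\phi)\equiv 0$ by decay, forcing a constant phase). Finally, $H^1(\R)\hookrightarrow C_0(\R)$ and a one-step bootstrap ($\abs{\phi}^4\phi,\ \abs{\phi}^2\phi\in L^2$ since $\phi\in L^\infty$) give $\phi\in H^2(\R)$, so that $\phi(x),\phi'(x)\to 0$ as $\abs{x}\to\infty$.

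Next I would set $\mu:=\omega-\tfrac{c^2}{4}$ and derive the first integral of \eqref{Eq:phi:II}. Multiplying the real equation $\mu\phi-\phi''-\tfrac{3}{16}\phi^5=-\tfrac{c}{2}\phi^3$ by $\phi'$ and integrating from $-\infty$ to $x$, the decay at $-\infty$ annihilates all boundary terms and yields
\begin{equation*}
(\phi'(x))^2=\mu\,\phi(x)^2+\frac{c}{4}\phi(x)^4-\frac{1}{16}\phi(x)^6,\qquad x\in\R.
\end{equation*}
A useful preliminary consequence is that a nontrivial $\phi$ never vanishes: at any zero $x_1$ this identity forces $\phi'(x_1)=0$ as well, whence $\phi\equiv 0$ by uniqueness for the second-order ODE; so, after replacing $\phi$ by $-\phi$ if necessary, $\phi>0$ everywhere while $\phi(x)\to 0$ as $\abs{x}\to\infty$.

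The conclusion then follows from a sign analysis of the right-hand side. In the supercritical case $4\omega<c^2$ one has $\mu<0$; dividing the first integral by $\phi(x)^2>0$ gives $0\le(\phi'(x)/\phi(x))^2=\mu+\tfrac{c}{4}\phi(x)^2-\tfrac{1}{16}\phi(x)^4$, and letting $x\to+\infty$ (so $\phi(x)\to 0$) forces $\mu\ge 0$, a contradiction. In the critical case $4\omega=c^2$ with $c\le 0$ one has $\mu=0$, so the first integral becomes $(\phi'(x))^2=\phi(x)^4\big(\tfrac{c}{4}-\tfrac{1}{16}\phi(x)^2\big)\le 0$ since $c\le 0$; hence $\phi'\equiv 0$, and with the decay $\phi\equiv 0$. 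In either case \eqref{Eq:phi:II}, and therefore \eqref{Eq:varphi:I}, admits no nontrivial solution in $H^1(\R,\C)$.

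I expect the only genuine work to be the justification that the first integral holds with vanishing additive constant, i.e. the a priori regularity and decay $\phi,\phi'\to 0$ at infinity that make the boundary terms disappear, together with the reduction to a real-valued profile; once these are in place the sign analysis is immediate. It is worth emphasizing that an argument based only on the global integral identities obtained by testing \eqref{Eq:phi:II} against $\phi$ and against $x\phi'$ settles the case $c\le 0$ but not the supercritical subregion with $c>0$, which is exactly why the pointwise quadrature identity — and in particular its limiting behavior as $\phi\to 0$ — is the decisive tool.
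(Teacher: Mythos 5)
Your proposal is correct, and its skeleton coincides with the paper's: the paper's proof is exactly the reduction via Lemma \ref{lem:structure} and Remark \ref{rem:rvs} to the real scalar ODE \eqref{Eq:phi:II}, after which the nonexistence is simply cited from Theorem 5 of Berestycki--Lions \cite{BerestLions:NLS:GS}. What you do differently is unpack that citation: your first integral $(\phi')^2=\mu\phi^2+\tfrac{c}{4}\phi^4-\tfrac{1}{16}\phi^6$ with $\mu=\omega-\tfrac{c^2}{4}$ is the correct quadrature (the $H^2$ bootstrap justifying the vanishing constant is the right technical point to flag), and your sign analysis in the two regimes is exactly the mechanism behind the Berestycki--Lions 1D criterion, so you have in effect reproved the cited result in this special case. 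This buys a self-contained, elementary proof that makes visible precisely where each parameter regime fails --- and your closing observation is apt: the global identities $K_{\omega,c}(\varphi)=0$ and the Pohozaev-type identity alone do not rule out the supercritical subregion $c>0$, which is why the pointwise quadrature is genuinely needed. Two minor remarks: your parenthetical alternative to Remark \ref{rem:rvs} only yields a constant phase on each connected component of $\{\phi\neq 0\}$, so as stated it needs your later nonvanishing step (or the observation that the same first integral holds verbatim for $|\phi|$, by pairing the equation with $\bar\phi'$ and taking real parts) to be complete; and in the supercritical case the nonvanishing normalization is dispensable, since evaluating the first integral at any point where $0<|\phi(x)|\ll 1$ already gives $(\phi')^2<0$.
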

\begin{proof}
After the structure analysis in Lemma \ref{lem:structure} and Remark \ref{rem:rvs}, we only need to show the nonexistence of the real valued nontrival solution to \eqref{Eq:phi:II}, which can be obtained by Theorem 5 in \cite{BerestLions:NLS:GS}.
\end{proof}

Now we consider the subcritical case $4\omega>c^2$ and the
critical case $4\omega=c^2$, $c>0$. The special structure for $\phi$
implies the special structure for $\varphi$ to \eqref{Eq:phi:I},
which induces that nontrivial solution $\varphi$ to \eqref{Eq:phi:I}
is just the nontrivial solution $\varphi$ to
\begin{equation}\label{Eq:varphi:II}
 \omega\varphi -\partial^2_x\varphi  -\frac{3}{16}\abs{\varphi}^4\varphi=-c\i\partial_x \varphi -\frac{c}{2}\abs{\varphi}^2\varphi,
\end{equation}
which exactly corresponds to the definitions \eqref{def:mass}-\eqref{def:eng} of the mass, the momentum and the energy. Formally, $\varphi$ is the critical points of the energy-mass
$E+\omega M$ provided that the momentum is fixed. Since the right hand side in \eqref{Eq:varphi:I} or \eqref{Eq:varphi:II} is not semilinear, but quasilinear, we need to combine the above structure analysis\footnote{From the proof, the structure analysis is the key
point and also necessary for us to show the existence of the solitary waves with the
critical parameters $4\omega=c^2, c>0$. The similar idea also appeared in showing
the existence of the solitary/traveling waves of Gross-Pitaevskii equation in
\cite{BethuelGS-GP-II, Gerard:GP, Maris:NLS}. }, the
Nehari manifold argument in \cite{AmbMal, Willem:MinMax} and the
symmetric-decreasing rearrangement in \cite{LiebL:book} to show the
existence of the solitary waves. It also helps to give the long time analysis
of solution to \eqref{DNLS} in next section. By the classical argument,
$\varphi\in H^1(\R, \C)\setminus\ltl{0}$ solves \eqref{Eq:varphi:II}
if and only if $\varphi\in H^1(\R, \C)\setminus\ltl{0}$ is a
nontrival critical point of the following functional
\begin{align}\label{Funct:J}
\nonumber
  J_{\omega,c}\sts{\varphi} :=&  E\sts{\varphi} +\omega M\sts{\varphi} + c P\sts{\varphi}\\
  = & \int \left(\frac{1}{2}\abs{\partial_x\varphi}^2  -\frac{1}{32}\abs{\varphi}^6 +
  \frac{\omega}{2}\abs{\varphi}^2 -
  \frac{c}{2}\Im\sts{\overline{\varphi}\partial_x\varphi}
  +\frac{c}{8}\abs{\varphi}^4\right)dx.
\end{align}
It is unbounded from below in $H^1(\R, \C)$. While, it is easy to
check that $J_{\omega,c}$ is (at least) a $C^2$ functional on
$H^1(\R, \C)$.
Moreover, as for \eqref{Eq:varphi:II}, we consider the following
quantity
\begin{align}\label{Funct:K}
K_{\omega,c}\sts{\varphi}:=
   \int\sts{\abs{\partial_x\varphi}^2-\frac{3}{16}\abs{\varphi}^6+\omega\abs{\varphi}^2  - c\Im\sts{\overline{\varphi}\partial_x\varphi}+\frac{c}{2}\abs{\varphi}^4
  }dx
\end{align}since any solution $\varphi\in H^1(\R, \C)\setminus\ltl{0}$ to
\eqref{Eq:varphi:II} satisfies
\begin{equation}
\label{eq:varphi:II:invariant}
  K_{\omega,c}\sts{\varphi}=0.
\end{equation}
Before dealing with \eqref{Eq:varphi:II}, we give a useful compactness
lemma.
\begin{lemma}
\label{lem:compact} Let $1<p<\infty$ and $\left\{ \phi_n \right\}$ be a
bounded sequence in $\dot{H}^1\sts{\R,\R}\cap L^p\sts{\R,\R}$ with
radially symmetric and nonincreasing. Then, there exists $\phi \in
\dot{H}^1\sts{\R,\R}\cap L^p\sts{\R,\R}$ with radially symmetric and
nonincreasing such that for $p<q<\infty,$ (up to a subsequence)
\begin{equation*}
\phi_n\to\phi \mbox{ strongly in } L^q\sts{\R,\R}.
\end{equation*}
\end{lemma}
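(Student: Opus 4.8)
The plan is to combine the classical pointwise decay estimate for radially nonincreasing functions with a monotone (Helly-type) selection argument, and then to promote pointwise convergence to strong $L^q$ convergence by splitting $\R$ into a neighborhood of the origin, a fixed compact annulus, and a tail. Throughout I treat the $\phi_n$ as symmetric decreasing, hence nonnegative and nonincreasing in $|x|$, and set $M:=\sup_n\big(\normto{\partial_x\phi_n}+\|\phi_n\|_{L^p}\big)<\infty$. The first step is to record two uniform a priori bounds. Since $\phi_n$ is even and nonincreasing in $|x|$, for $x>0$ one has $\|\phi_n\|_{L^p}^p\ge\int_{-x}^{x}|\phi_n(y)|^p\,dy\ge 2x\,|\phi_n(x)|^p$, which gives the decay estimate
\[
|\phi_n(x)|\le (2|x|)^{-1/p}\,M ,
\]
using only the $L^p$ bound and monotonicity. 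Separately, I claim a uniform bound $\|\phi_n\|_{L^\infty}=\phi_n(0)\le CM$: in one dimension $\phi_n\in\dot H^1$ is $\tfrac12$-Hölder with constant $\normto{\partial_x\phi_n}\le M$, so if $A:=\phi_n(0)$ then monotonicity gives $\phi_n(x)\ge A/2$ on $|x|\le A^2/(4M^2)$, whence $M^p\ge\|\phi_n\|_{L^p}^p\gtrsim A^{p+2}/M^2$ and therefore $A\lesssim M$. This $\dot H^1$-driven bound is precisely what prevents the sequence from concentrating at the origin.

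With these bounds in hand, the second step is to extract the limit. On the set $|x|\le CM$ and $x>0$ the functions $\phi_n$ are uniformly bounded and monotone, so Helly's selection theorem yields a subsequence converging pointwise on $(0,\infty)$; using evenness I obtain a subsequence (still denoted $\phi_n$) and a nonnegative even function $\phi$, nonincreasing in $|x|$, with $\phi_n\to\phi$ pointwise a.e. The limit inherits both a priori bounds, namely $|\phi(x)|\le(2|x|)^{-1/p}M$ and $\|\phi\|_{L^\infty}\le CM$, and by Fatou $\|\phi\|_{L^p}\le M$; moreover the uniform $\dot H^1$ bound gives, after passing to a further subsequence, $\partial_x\phi_n\rightharpoonup\partial_x\phi$ weakly in $L^2$, so $\phi\in\dot H^1$ with $\normdh{\phi}\le\liminf_n\normdh{\phi_n}$. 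In particular $\phi$ is radially symmetric and nonincreasing, as required.

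The third step is the strong convergence for $p<q<\infty$. Fix $\eps>0$ and split $\int_\R|\phi_n-\phi|^q\,dx$ over $\{|x|\le\delta\}$, $\{\delta\le|x|\le R\}$, and $\{|x|\ge R\}$. On the tail the decay estimate gives $|\phi_n-\phi|^q\le C|x|^{-q/p}$ with $q/p>1$, so $\int_{|x|\ge R}|\phi_n-\phi|^q\,dx\to 0$ as $R\to\infty$, uniformly in $n$. Near the origin the uniform $L^\infty$ bound gives $\int_{|x|\le\delta}|\phi_n-\phi|^q\,dx\le(2CM)^q\cdot 2\delta\to 0$ as $\delta\to 0$, uniformly in $n$. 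Having chosen $\delta$ small and $R$ large so that these two pieces are $<\eps$, the middle piece is handled by bounded convergence on the finite-measure set $\{\delta\le|x|\le R\}$, where the integrand is dominated by the constant $(2CM)^q$ and tends to $0$ a.e. Combining the three estimates yields $\limsup_n\|\phi_n-\phi\|_{L^q}^q\le 2\eps$, and letting $\eps\to 0$ completes the argument.

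The main obstacle is the behavior at the origin: monotonicity together with the $L^p$ bound alone would allow a tall thin bump to carry a fixed amount of $L^q$ mass into $x=0$, destroying both the $L^\infty$ bound and strong convergence. The decisive ingredient is therefore the uniform $L^\infty$ bound, which genuinely requires the $\dot H^1$ hypothesis, since a spike of height $A$ and width $\sim A^{-p}$ has $\dot H^1$ norm $\sim A^{(p+2)/2}\to\infty$; the remaining ingredients—the elementary $L^p$ decay estimate, Helly selection, and the three-region splitting—are standard. It is also worth noting that the range $p<q<\infty$ is sharp for this scheme: at $q=p$ the tail integral $\int_{|x|\ge R}|x|^{-1}\,dx$ diverges, reflecting possible loss of mass to infinity, while at $q=\infty$ pointwise convergence cannot be upgraded to uniform convergence.
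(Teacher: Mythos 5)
Your proof is correct, and it reaches the conclusion by a genuinely different mechanism than the paper. The paper extracts the limit by weak compactness in $\dot{H}^1\cap L^p$ plus the local Rellich embedding (giving a.e.\ convergence), then applies Egorov's theorem on $(-2R,2R)$ and uses monotonicity to transfer smallness of $\phi$ at a single point $x_0\in(R,2R)$ into the uniform-in-$x$ bound $\phi_n(x)\leq \phi(x_0)+2\epsilon\leq 3\epsilon$ outside $(-2R,2R)$ for large $n$; the tail of the $L^q$ norm is then controlled by the interpolation $|\phi_n-\phi|^q\leq \left(3\epsilon\right)^{q-p}|\phi_n-\phi|^p$ together with the uniform $L^p$ bound. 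You instead prove the quantitative Strauss-type decay $|\phi_n(x)|\leq (2|x|)^{-1/p}M$, which makes the tail estimate uniform in $n$ from the outset (using $q/p>1$) with no Egorov step, use Helly selection rather than weak compactness to get pointwise convergence, and — this is the most valuable difference — establish the explicit uniform bound $\|\phi_n\|_{L^\infty}\lesssim M$ from the $C^{1/2}$-H\"older estimate that $\dot{H}^1(\R)$ provides. That last point actually patches a step the paper leaves implicit: its appeal to ``absolute continuity of the Lebesgue integral'' on the exceptional set $(-2R,2R)\setminus E_\epsilon$ requires uniform integrability of $|\phi_n-\phi|^q$ along the sequence, which is exactly what your $L^\infty$ bound (or a uniform $L^r$ bound, $r>q$, derived from it) supplies; a fixed integrable dominating function is not automatic there. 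Your closing observations are also accurate and worth keeping: the spike computation showing $\dot{H}^1$-norm $\sim A^{(p+2)/2}$ for a bump of height $A$ and width $A^{-p}$ demonstrates that $L^p$-boundedness plus monotonicity alone cannot control the origin (the decay bound $|x|^{-q/p}$ is non-integrable at $0$ precisely when $q>p$), and the failure at $q=p$ from mass escaping to infinity shows the stated range is sharp for this scheme. In short: same skeleton (compact region plus tail, with monotonicity doing the work at infinity and $\dot{H}^1$ doing the work at the origin), but your quantitative route is more self-contained and slightly more rigorous at the one delicate point.
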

\begin{proof}
Since $
  \phi_n\in \dot{H}^1\sts{\R,\R},
$ for all $n$, we have by Sobolev embedding,
\begin{equation*}
  \phi_n\in C^{\frac{1}{2}}\sts{\R,\R}.
\end{equation*}
Thus, we may assume that $\phi_n$ are well defined pointwise for all
$n.$

Since $\left\{ \phi_n \right\}$ is bounded in
$\dot{H}^1\sts{\R,\R}\cap L^p\sts{\R,\R},$ by Sobolev embedding, if
necessary up to a subsequence, there exists non-increasing, radial
function $\phi\in \dot{H}^1\sts{\R,\R}\cap L^p\sts{\R,\R}$ such that
\begin{align}
\nonumber & \phi_n\to \phi  \quad\mbox{ weakly in }
\dot{H}^1\sts{\R,\R}\cap L^p\sts{\R,\R}
\\
\nonumber & \phi_n\to \phi  \quad\mbox{ strongly in }
L^q_{\text{loc}}\sts{\R,\R}
\\
\label{app:eq:P3} & \phi_n\to \phi  \quad\mbox{ a.e. on } \mathbb{R}
\end{align}
Using $\phi\in \dot{H}^1\sts{\R,\R}\cap L^p\sts{\R,\R},$ for any
$\epsilon>0,$ there exists $R=R(\phi)>1$ large enough, which is independent of $n$ such that
\begin{equation}
\label{app:eq:P4} \phi\left( x \right) < \epsilon, \mbox{ for any }
\left\vert x \right\vert>R.
\end{equation}
Moreover, by Egorov's theorem, and  \eqref{app:eq:P3}, there exists
$E_{\epsilon}\subset \left( -2R,2R \right)$ such that
\begin{align*}\mathrm{mes}\left( \left( -2R,2R \right) \setminus
E_{\epsilon} \right)<\epsilon, \end{align*}  and
\begin{align}
\label{app:eq:P5} \phi_n\left( x \right) \to \phi\left( x \right)
\quad\mbox{  uniformlly on  } E_{\epsilon}.
\end{align}
Since $\left\{ \phi_n \right\}$ is positive, radially symmetric and
non-increasing, it follows from \eqref{app:eq:P4} and
\eqref{app:eq:P5} that for $n$ large enough,
\begin{equation*}
\phi_n\left( x \right)\leqslant \phi_n\left( x_0 \right)\leqslant
\phi\left( x_0 \right) + 2\epsilon, \quad \text{for all~~}
x\in\mathbb{R}\setminus \left( -2R,2R \right),
\end{equation*}
where $x_0\in E_{\epsilon}\cap\left( -2R,2R
\right)\setminus\sts{-R,R }$ and
$\phi\sts{x_0}<\epsilon.$ Hence, we obtain
\begin{align*}
 &  \int_{\mathbb{R}}\left\vert \phi_n\left( x \right) - \phi\left( x \right) \right\vert^q dx \\
  =& \int_{\left( -2R,2R \right)}\left\vert \phi_n\left( x \right) - \phi\left( x \right) \right\vert^q dx + \int_{\mathbb{R}\setminus \left( -2R,2R \right)}\left\vert \phi_n\left( x \right) - \phi\left( x \right) \right\vert^q dx
  \\
  =& \int_{E_{\epsilon}}\left\vert \phi_n\left( x \right)
    -\phi\left( x \right) \right\vert^q dx
    +\int_{\left( -2R,2R \right)\setminus E_{\epsilon}}
        \left\vert \phi_n\left( x \right)-\phi\left( x \right) \right\vert^q dx
  \\
    &+\int_{\R\setminus\left( -2R,2R \right)}
        \left\vert \phi_n\left( x \right)-\phi\left( x \right) \right\vert^q dx
  \\
  \leqslant &
    \int_{E_{\epsilon}}\left\vert \phi_n\left( x \right)
    -\phi\left( x \right) \right\vert^q dx
    +\int_{\left( -2R,2R \right)\setminus E_{\epsilon}}
        \left\vert \phi_n\left( x \right)-\phi\left( x \right) \right\vert^q dx
  \\
    &  +\sts{3\epsilon}^{q-p}\sup_{n}\left\Vert \phi_n \right\Vert_p^p.
\end{align*}
Then, for sufficiently large $n$, we have the result by
\eqref{app:eq:P5}, absolutely continuity of Lebesgue integral,
$\mathrm{mes}\left( \left( -2R,2R \right) \setminus E_{\epsilon}
\right)<\epsilon$, and the arbitrary smallness of $\epsilon$.
\end{proof}

\subsection{Variational characterization for the subcritical case $4\omega>c^2$}
\label{sect:vc:subcritical} In this subsection, we shall give the
variational characterization\footnote{In fact, Colin and
Ohta\cite{ColinOhta-DNLS} have given the corresponding variational
characterization via the concentration compactness argument for the
subcritical parameters $4\omega>c^2$, nevertheless, we will show
this again by the Nehari Manifold and the non-increasing
rearrangement technique. Different with Colin-Ohta's argument
\cite{ColinOhta-DNLS}, the argument here also works for the critical
parameters $4\omega=c^2, c>0$. It will be shown in Section
\ref{sect:vc:critical}.} of the solution to \eqref{Eq:varphi:II} in the subcritical case $4\omega>c^2$.
Let $J_{\omega,c}$ and $K_{\omega,c}$ denote by \eqref{Funct:J} and
\eqref{Funct:K} respectively. By Lemma \ref{lem:structure}, we will
consider the functional $J_{\omega,c}$ and $K_{\omega,c}$ in $\th.$
More precisely, we will consider the following Sobolev space with the
rotation structure
   \begin{equation*}
     \th:=\ltl{ \varphi\in\mathcal{S}'\sts{\R}
        ~:~
        \varphi\sts{x}=e^{i\frac{c}{2}x}\phi\sts{x}
        \,\, \text{with}\,\, \phi\in H^1\sts{\R,\C}},
   \end{equation*}
   with the norm
   \begin{equation*}
        \normth{\varphi}^2:=\normdh{\phi}^2 + \sts{\omega-\frac{c^2}{4}}\normto{\phi}^2,\quad\text{with } \phi\in H^1\sts{\R,\C}.
   \end{equation*}
   It is easy to verify that $\sts{\th,\normth{\cdot}}$ is
    a Hilbert space\footnote{The inner product in $\th$ is induced by the inner product in $H^1\sts{\R,\C}$, and it is homeomorphic to $H^1(\R, \C)$}.
On one hand, $K_{\omega,c}$ is well defined and of class $C^1$ on
$\th.$ On the other hand, if $\varphi\in \th\setminus\ltl{0}$ is the
solution of \eqref{Eq:varphi:II}, then $\varphi$ satisfies
\eqref{eq:varphi:II:invariant}, which implies that
$K_{\omega,c}(\varphi)$ is an invariant quantity of solutions to
\eqref{Eq:varphi:II}. Combining the above two facts, we consider the
following minimization problem
\begin{equation}\label{Sub:mini:eq:struct}
J^0_{\omega,c}=\inf\ltl{J_{\omega,c}\sts{\varphi}~:~K_{\omega,c}\sts{\varphi}=0,~~\varphi\in
\th\setminus\ltl{0}}.
\end{equation}

For convenience, we define:
\begin{align*}
  K^Q_{\omega,c}\sts{\varphi} &:= \int\sts{\abs{\partial_x\varphi}^2+\omega\abs{\varphi}^2
  -2c\Im\sts{\overline{\varphi}\partial_x\varphi}}dx, \\
  K^N_{\omega,c}\sts{\varphi} &:=K^Q_{\omega,c}\sts{\varphi}-K_{\omega,c}\sts{\varphi}
  =\int \sts{\frac{3}{16}\abs{\varphi}^6-\frac{c}{2}\abs{\varphi}^4}dx.
\end{align*}
By the definition,  we have for $\lambda>0$ and $\alpha \in
\left(\frac{c^2}{4\omega}, 1\right)$\footnote{For the critical case
$4\omega=c^2, c>0$, we need take $\alpha=1$. Notice that $
K^Q_{\omega,c}\sts{\varphi}$ is not coercive in $H^1(\R, \C)$ in the
critical case, it is just this difficulty which motivates us to
explore the structure of the solitary waves.}
\begin{align*}
K^Q_{\omega,c}\sts{\lambda\varphi}  =&  \lambda^2
\int\sts{\abs{\partial_x\varphi}^2+\omega\abs{\varphi}^2
  -2c\Im\sts{\overline{\varphi}\partial_x\varphi}}dx\\
  = & \lambda^2\left[(1-\alpha)\big\|\partial_x \varphi\|^2_{L^2}+
  \frac{1}{\alpha}\left\|\alpha\partial_x \varphi - \frac{c}{2} i\varphi\right\|^2_{L^2} +
  \left(\omega-\frac{c^2}{4\alpha}\right)\|\varphi\|^2_{L^2}
  \right],
    \end{align*} which implies that
\begin{lemma}\label{Lem:sub:smallscal}For any $\varphi\in \th \setminus\ltl{0}$, we
have \[ \lim_{\lambda \rightarrow 0 +} K^Q_{\omega,c}\sts{\lambda\varphi} =0.\]
\end{lemma}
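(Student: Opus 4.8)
The plan is to read the conclusion straight off the scaling computation displayed immediately before the statement. The functional $K^Q_{\omega,c}$ is a quadratic form in $\varphi$: every term of its integrand ($\abs{\partial_x\varphi}^2$, $\omega\abs{\varphi}^2$, and the cross term $-2c\,\Im\sts{\overline{\varphi}\partial_x\varphi}$) is homogeneous of degree two in $\varphi$. Hence substituting $\lambda\varphi$ and pulling the factor out gives, for every $\lambda>0$,
\begin{equation*}
K^Q_{\omega,c}\sts{\lambda\varphi} = \lambda^2\int\sts{\abs{\partial_x\varphi}^2+\omega\abs{\varphi}^2-2c\,\Im\sts{\overline{\varphi}\partial_x\varphi}}dx = \lambda^2\,K^Q_{\omega,c}\sts{\varphi}.
\end{equation*}
The whole content therefore reduces to observing that $K^Q_{\omega,c}\sts{\varphi}$ is a fixed finite real number, after which letting $\lambda\to 0^+$ sends the prefactor $\lambda^2$ to zero and yields the claim.

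The one point worth recording is the finiteness of $K^Q_{\omega,c}\sts{\varphi}$ for $\varphi\in\th$. In the subcritical regime $4\omega>c^2$ one has $\omega-\frac{c^2}{4}>0$, so $\normth{\cdot}$ is equivalent to the $H^1$ norm of $\phi$ and, writing $\varphi=e^{i\frac{c}{2}x}\phi$ with $\phi\in H^1\sts{\R,\C}$, one checks that $\varphi\in H^1\sts{\R,\C}$ as well. Each of the three terms of the integrand is then controlled by $\normth{\varphi}^2$ --- the gradient and mass terms directly, and the cross term by Cauchy--Schwarz --- so that $K^Q_{\omega,c}\sts{\varphi}$ is finite.

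There is essentially no obstacle here: once degree-two homogeneity is noted and finiteness is recorded, the limit is automatic, and I would keep the proof to the single display above together with a one-line remark on the embedding $\th\hookrightarrow H^1\sts{\R,\C}$. The statement is isolated as a lemma only because it supplies the $\lambda\to 0^+$ endpoint of the scaling map $\lambda\mapsto K^Q_{\omega,c}\sts{\lambda\varphi}$, which is needed in the ensuing Nehari-manifold analysis of $J_{\omega,c}$ and $K_{\omega,c}$.
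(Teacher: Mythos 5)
Your proof is correct and coincides with the paper's own argument, which likewise reads the conclusion off the displayed scaling identity $K^Q_{\omega,c}(\lambda\varphi)=\lambda^2 K^Q_{\omega,c}(\varphi)$ (degree-two homogeneity) together with the finiteness of $K^Q_{\omega,c}(\varphi)$ for $\varphi\in\tilde{H}_c$, which in the subcritical regime $4\omega>c^2$ follows from the equivalence of $\|\cdot\|_{\tilde{H}_c}$ with the $H^1$ norm of $\phi$ and Cauchy--Schwarz on the cross term. Your added remark on the embedding $\tilde{H}_c\hookrightarrow H^1(\R,\C)$ only makes explicit what the paper leaves implicit, so there is nothing to correct.
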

The next lemma exhibits the behavior of $K_{\omega,c}$ near the origin of $\th.$
\begin{lemma}
\label{Lem:knear0}
  For any bounded sequence $\ltl{\varphi_n}\subset \th\setminus\ltl{0}$ with
  \begin{equation*}
    \lim_{n\to\infty}K^Q_{\omega,c}\sts{\varphi_n}=0.
  \end{equation*}
  We have for large $n,$
  \begin{equation*}
    K_{\omega,c}\sts{\varphi_n}>0.
  \end{equation*}
\end{lemma}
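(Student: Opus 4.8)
The plan is to use the decomposition $K_{\omega,c}=K^Q_{\omega,c}-K^N_{\omega,c}$, in which the assumption $K^Q_{\omega,c}(\varphi_n)\to 0$ forces $\varphi_n\to 0$ in $H^1$, so that the purely nonlinear remainder $K^N_{\omega,c}$ becomes negligible compared with the coercive quadratic part $K^Q_{\omega,c}$.

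First I would record the coercivity of $K^Q_{\omega,c}$ on $\th$. Taking $\lambda=1$ in the identity for $K^Q_{\omega,c}(\lambda\varphi)$ displayed just before Lemma \ref{Lem:sub:smallscal}, and fixing any $\alpha\in\left(\frac{c^2}{4\omega},1\right)$ (a nonempty interval precisely because $4\omega>c^2$), all three terms are nonnegative while the coefficients $1-\alpha$ and $\omega-\frac{c^2}{4\alpha}$ are strictly positive. Hence there is a constant $c_0=c_0(\omega,c)>0$ with
\[
K^Q_{\omega,c}(\varphi)\ \ge\ (1-\alpha)\normto{\partial_x\varphi}^2+\Big(\omega-\tfrac{c^2}{4\alpha}\Big)\normto{\varphi}^2\ \ge\ c_0\Big(\normto{\partial_x\varphi}^2+\normto{\varphi}^2\Big),\qquad \varphi\in\th .
\]
Applying this to $\varphi_n$, the hypothesis $K^Q_{\omega,c}(\varphi_n)\to 0$ yields both $\normto{\partial_x\varphi_n}\to 0$ and $\normto{\varphi_n}\to 0$, with the quantitative bounds $\normto{\partial_x\varphi_n}^2,\normto{\varphi_n}^2\lesssim K^Q_{\omega,c}(\varphi_n)$. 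In the subcritical regime the assumed boundedness of $\{\varphi_n\}$ is therefore automatic; what the proof really exploits is the $L^2$-decay supplied by coercivity.

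Next I would control $K^N_{\omega,c}$ by the one–dimensional Gagliardo–Nirenberg inequalities $\normsix{\varphi}^6\lesssim\normto{\varphi}^4\normto{\partial_x\varphi}^2$ and $\normf{\varphi}^4\lesssim\normto{\varphi}^3\normto{\partial_x\varphi}$. Writing $\kappa_n:=K^Q_{\omega,c}(\varphi_n)$ and inserting $\normto{\partial_x\varphi_n},\normto{\varphi_n}\lesssim\kappa_n^{1/2}$ gives $\normsix{\varphi_n}^6\lesssim\kappa_n^3$ and $\normf{\varphi_n}^4\lesssim\kappa_n^2$, so that
\[
\big|K^N_{\omega,c}(\varphi_n)\big|\ \le\ \tfrac{3}{16}\normsix{\varphi_n}^6+\tfrac{|c|}{2}\normf{\varphi_n}^4\ \lesssim\ \kappa_n^2\ =\ \o{\kappa_n}.
\]
Since $\varphi_n\neq 0$ and $K^Q_{\omega,c}$ is positive definite we have $\kappa_n>0$, and therefore
\[
K_{\omega,c}(\varphi_n)=K^Q_{\omega,c}(\varphi_n)-K^N_{\omega,c}(\varphi_n)\ \ge\ \kappa_n-C\kappa_n^2=\kappa_n\big(1-C\kappa_n\big)\ >\ 0
\]
for all $n$ large enough that $C\kappa_n<1$, which is the assertion.

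I expect the one delicate point to be the quartic term: its Gagliardo–Nirenberg bound is only $O(\normto{\partial_x\varphi_n})=O(\kappa_n^{1/2})$, which does not beat $\kappa_n=K^Q_{\omega,c}(\varphi_n)$, unless one also uses $\normto{\varphi_n}\to 0$, so that the cubic factor $\normto{\varphi_n}^3$ furnishes the extra smallness upgrading the estimate to $O(\kappa_n^2)$. This is exactly where the full subcritical coercivity—i.e.\ the control of $\normto{\varphi_n}$ by $K^Q_{\omega,c}(\varphi_n)$, which breaks down in the critical regime $4\omega=c^2$ where $K^Q_{\omega,c}$ no longer sees the $L^2$ mass—is essential, and it is why mere boundedness of the sequence would not by itself close the argument.
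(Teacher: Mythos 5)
Your proof is correct and takes essentially the same route as the paper's: both decompose $K_{\omega,c}=K^Q_{\omega,c}-K^N_{\omega,c}$, exploit the subcritical coercivity of $K^Q_{\omega,c}$ (the paper via the exact gauge identity $K^Q_{\omega,c}\sts{\varphi_n}=\normdh{\phi_n}^2+\sts{\omega-\frac{c^2}{4}}\normto{\phi_n}^2$ with $\phi_n=e^{-i\frac{c}{2}x}\varphi_n$, you via the $\alpha$-splitting bound displayed before Lemma \ref{Lem:sub:smallscal} --- two equivalent formulations), and then use the one-dimensional Gagliardo--Nirenberg inequalities on the sextic and quartic terms to get $K^N_{\omega,c}\sts{\varphi_n}=\o{K^Q_{\omega,c}\sts{\varphi_n}}$, whence $K_{\omega,c}\sts{\varphi_n}\approx K^Q_{\omega,c}\sts{\varphi_n}>0$ for large $n$. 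Your closing observation --- that the quartic term needs the $L^2$-smallness supplied by coercivity, which fails precisely when $4\omega=c^2$ --- correctly identifies why the paper must treat the critical case separately in the space $X_c$.
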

\begin{proof}
  Since $\varphi_n\in \th\setminus\ltl{0},$ there exists $\phi_n\in H^1\sts{\R,\C},$ such that
  $\varphi_n\sts{x}=e^{i\frac{c}{2}x}\phi_n\sts{x}$ with
   \begin{align*}
     K^Q_{\omega,c}\sts{\varphi_n} &= \int\sts{\abs{\partial_x\varphi_n}^2+\omega\abs{\varphi_n}^2
  -c\Im\sts{\overline{\varphi}_n\partial_x\varphi_n}}dx\\
     & =\int \left[\abs{\partial_x \sts{e^{-i\frac{c}{2}x}\varphi_n}}^2+\sts{\omega-\frac{c^2}{4}}\abs{e^{-i\frac{c}{2}x}\varphi_n}^2\right]dx
     \\
     &=  \int \left[\abs{\partial_x\phi_n}^2+\sts{\omega-\frac{c^2}{4}}\abs{\phi_n}^2 \right]dx.
   \end{align*}

   By $\lim_{n\to\infty}K^Q_{\omega,c}\sts{\varphi_n}=0,$ it follows from the Gagliardo-Nirenberg inequality that, for sufficiently large $n$
  \begin{align*}
       K^N_{\omega,c}\sts{\varphi_n}
        &=\int\sts{\frac{3}{16}\abs{\varphi_n}^6-\frac{c}{2}\abs{\varphi_n}^4}dx\\
        &=\int\sts{\frac{3}{16}\abs{\phi_n}^6-\frac{c}{2}\abs{\phi_n}^4}dx\\
        &\lesssim \normdh{\phi_n}^2\normto{\phi_n}^4 + \normdh{\phi_n}\normto{\phi_n}^3\\
        &=\o{ K^Q_{\omega,c}\sts{\varphi_n} },
    \end{align*}
where we used the fact $4\omega>c^2.$  Thus, for sufficiently large $n$, we have
    \begin{equation*}
        K_{\omega,c}\sts{\varphi_n}=K^Q_{\omega,c}\sts{\varphi_n}-K^N_{\omega,c}\sts{\varphi_n}\approx K^Q_{\omega,c}\sts{\varphi_n}>0.
    \end{equation*}
  This completes the proof.
\end{proof}
%

According to the aforementioned lemma, we now replace the functional
$J_{\omega,c}$ (unbounded from below) in \eqref{Sub:mini:eq:struct}
with a positive functional $H_{\omega,c}$, while extending the
minimizing region from the mountain ridge ``$K_{\omega,c}=0$'' to
the mountain flank ``$K_{\omega,c}\leq 0$''. Let
\begin{align}\label{V:H}
  H_{\omega,c}\sts{\varphi} := & J_{\omega,c}\sts{\varphi}-\frac{1}{4}K_{\omega,c}\sts{\varphi}
  \nonumber \\
  =& \int\sts{\frac{1}{4}\abs{\partial_x\varphi}^2 - \frac{c}{4} \Im\sts{\overline{\varphi}\partial_x\varphi}+\frac{\omega}{4}\abs{\varphi}^2+\frac{1}{64}\abs{\varphi}^6
  }dx \nonumber
  \\
  =&\frac{1}{4}\int\sts{{\abs{\partial_x \sts{e^{-i\frac{c}{2}x}\varphi}}^2
    +\sts{\omega-\frac{c^2}{4}}\abs{\varphi}^2}
    +\frac{1}{16}\abs{\varphi}^6}dx,
\end{align}
which is positive. According to this definition, for any $\varphi\in
\th\setminus\ltl{0} $ and $0<\lambda_1 < \lambda_2$, we have the
following monotonicity.
\begin{align}\label{H:prop}
  H_{\omega,c}\sts{\lambda_1\varphi}<  H_{\omega,c}\sts{\lambda_2\varphi}.
\end{align}
 In order to find the minimizers of \eqref{Sub:mini:eq:struct}, we
 turn to consider the following constrained minimization problem
\begin{equation}\label{Sub:mini:ineq:struct}
  \tilde{J}^0_{\omega,c}=\inf\ltl{H_{\omega,c}\sts{\varphi}~:~K_{\omega,c}\sts{\varphi}\leqslant 0,~~\varphi\in \th\setminus\ltl{0}~},
\end{equation}
The following lemma shows that two minimization problems
\eqref{Sub:mini:eq:struct} and \eqref{Sub:mini:ineq:struct} are equivalent.
\begin{lemma}
\label{lem:sub:equiminimization}
  Let $J^0_{\omega,c}$ and $\tilde{J}^0_{\omega,c}$ be defined by \eqref{Sub:mini:eq:struct} and \eqref{Sub:mini:ineq:struct} respectively.
  Then we have  \begin{enumerate}
\item   $J^0_{\omega,c}=\tilde{J}^0_{\omega,c}>0.$
\item  any minimizer for \eqref{Sub:mini:eq:struct}  is also a minimizer for \eqref{Sub:mini:ineq:struct}, and vice versa.
\end{enumerate}
\end{lemma}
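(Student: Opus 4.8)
The plan is to exploit two structural identities that follow from \eqref{V:H} and from the computation carried out in the proof of Lemma~\ref{Lem:knear0}: first that $H_{\omega,c}$ controls the $\th$-norm, since
\[
H_{\omega,c}\sts{\varphi} = \frac{1}{4}\normth{\varphi}^2 + \frac{1}{64}\int\abs{\varphi}^6\,dx,
\]
and second that $K^Q_{\omega,c}\sts{\varphi}=\normth{\varphi}^2$. I would first dispose of the strict positivity $\tilde{J}^0_{\omega,c}>0$. Suppose $\ltl{\varphi_n}$ were a minimizing sequence for \eqref{Sub:mini:ineq:struct} with $H_{\omega,c}\sts{\varphi_n}\to 0$. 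The displayed identity forces $\normth{\varphi_n}\to 0$, so $\ltl{\varphi_n}$ is bounded and $K^Q_{\omega,c}\sts{\varphi_n}=\normth{\varphi_n}^2\to 0$. Lemma~\ref{Lem:knear0} then yields $K_{\omega,c}\sts{\varphi_n}>0$ for large $n$, contradicting the admissibility constraint $K_{\omega,c}\sts{\varphi_n}\leq 0$. Hence $\tilde{J}^0_{\omega,c}>0$.

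Next I would establish $J^0_{\omega,c}=\tilde{J}^0_{\omega,c}$ by two inequalities. For $\tilde{J}^0_{\omega,c}\leq J^0_{\omega,c}$, observe that any $\varphi$ admissible for \eqref{Sub:mini:eq:struct} (so $K_{\omega,c}\sts{\varphi}=0$) is admissible for \eqref{Sub:mini:ineq:struct}, and on that constraint $H_{\omega,c}\sts{\varphi}=J_{\omega,c}\sts{\varphi}$ by the very definition \eqref{V:H}; taking the infimum gives the inequality. The reverse inequality is the crux: given $\varphi$ with $K_{\omega,c}\sts{\varphi}\leq 0$, I would analyze the scaling $\lambda\mapsto K_{\omega,c}\sts{\lambda\varphi}$. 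Writing
\[
K_{\omega,c}\sts{\lambda\varphi}=\lambda^2 K^Q_{\omega,c}\sts{\varphi}-\frac{3}{16}\lambda^6\int\abs{\varphi}^6\,dx+\frac{c}{2}\lambda^4\int\abs{\varphi}^4\,dx,
\]
the quadratic term dominates as $\lambda\to 0^+$ because $K^Q_{\omega,c}\sts{\varphi}=\normth{\varphi}^2>0$, so $K_{\omega,c}\sts{\lambda\varphi}>0$ for small $\lambda$, while $K_{\omega,c}\sts{\varphi}\leq 0$ at $\lambda=1$; by continuity there is $\lambda_0\in(0,1]$ with $K_{\omega,c}\sts{\lambda_0\varphi}=0$. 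Then $\lambda_0\varphi$ is admissible for \eqref{Sub:mini:eq:struct}, so $J^0_{\omega,c}\leq J_{\omega,c}\sts{\lambda_0\varphi}=H_{\omega,c}\sts{\lambda_0\varphi}$, and the monotonicity \eqref{H:prop} with $\lambda_0\leq 1$ gives $H_{\omega,c}\sts{\lambda_0\varphi}\leq H_{\omega,c}\sts{\varphi}$; taking the infimum over admissible $\varphi$ yields $J^0_{\omega,c}\leq\tilde{J}^0_{\omega,c}$. Combined with the positivity above, this proves (1).

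Finally, for the minimizer correspondence (2): if $\varphi$ minimizes \eqref{Sub:mini:eq:struct}, then $K_{\omega,c}\sts{\varphi}=0$ gives $H_{\omega,c}\sts{\varphi}=J_{\omega,c}\sts{\varphi}=J^0_{\omega,c}=\tilde{J}^0_{\omega,c}$, so it minimizes \eqref{Sub:mini:ineq:struct}. Conversely, if $\varphi$ minimizes \eqref{Sub:mini:ineq:struct}, I would argue by contradiction that $K_{\omega,c}\sts{\varphi}=0$: if instead $K_{\omega,c}\sts{\varphi}<0$, the same scaling analysis produces $\lambda_0$ \emph{strictly} in $(0,1)$ with $K_{\omega,c}\sts{\lambda_0\varphi}=0$, and the strict monotonicity \eqref{H:prop} forces $H_{\omega,c}\sts{\lambda_0\varphi}<H_{\omega,c}\sts{\varphi}=\tilde{J}^0_{\omega,c}$, contradicting the admissibility of $\lambda_0\varphi$. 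Hence $K_{\omega,c}\sts{\varphi}=0$ and $J_{\omega,c}\sts{\varphi}=H_{\omega,c}\sts{\varphi}=\tilde{J}^0_{\omega,c}=J^0_{\omega,c}$, so $\varphi$ minimizes \eqref{Sub:mini:eq:struct}. The main obstacle throughout is guaranteeing the scaling parameter lands in $(0,1]$ (respectively strictly in $(0,1)$), which rests on the sign of $K_{\omega,c}\sts{\lambda\varphi}$ near $\lambda=0$; this is precisely where Lemma~\ref{Lem:knear0} (hence the subcritical condition $4\omega>c^2$) and the strict monotonicity \eqref{H:prop} are indispensable.
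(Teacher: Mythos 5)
Your proposal is correct and follows essentially the same route as the paper's proof: the trivial inequality $\tilde{J}^0_{\omega,c}\leq J^0_{\omega,c}$ since $H_{\omega,c}=J_{\omega,c}$ on the constraint $K_{\omega,c}=0$, the reverse inequality by scaling $\lambda\varphi$ back onto the Nehari manifold (where the paper invokes Lemma \ref{Lem:sub:smallscal} and Lemma \ref{Lem:knear0} for the existence of $\lambda_0\in(0,1)$, you instead compute the polynomial $\lambda\mapsto K_{\omega,c}\sts{\lambda\varphi}$ directly, which is equivalent) together with the strict monotonicity \eqref{H:prop}, and the same contradiction argument for the minimizer correspondence in part (2). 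The one genuine addition is your explicit proof of the strict positivity $\tilde{J}^0_{\omega,c}>0$ via the coercivity identity $H_{\omega,c}\sts{\varphi}=\frac{1}{4}\normth{\varphi}^2+\frac{1}{64}\int\abs{\varphi}^6\,dx$ and Lemma \ref{Lem:knear0} applied to a hypothetical minimizing sequence with $H_{\omega,c}\sts{\varphi_n}\to 0$: the paper's proof establishes the equality $J^0_{\omega,c}=\tilde{J}^0_{\omega,c}$ and the minimizer correspondence but never actually verifies the claimed positivity, so this step of yours fills a real (if small) gap in the written argument.
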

\begin{proof}
  First, by definition, we have
  \begin{equation*}
    H_{\omega,c}\sts{\varphi}=J_{\omega,c}\sts{\varphi} \quad\text{for any } \varphi\in \th\setminus\ltl{0} \quad\text{ with } K_{\omega,c}\sts{\varphi}=0,
  \end{equation*}
  we have
  \begin{align*}
    J^0_{\omega,c} &= \inf\ltl{J_{\omega,c}\sts{\varphi}~:~K_{\omega,c}\sts{\varphi}=0,~~\varphi\in \th\setminus\ltl{0}~} \\
    &= \inf\ltl{H_{\omega,c}\sts{\varphi}~:~K_{\omega,c}\sts{\varphi}=0,~~\varphi\in \th\setminus\ltl{0}~}
    \\
    & \geqslant \inf\ltl{H_{\omega,c}\sts{\varphi}~:~K_{\omega,c}\sts{\varphi}\leqslant 0,~~\varphi\in \th\setminus\ltl{0}~}\\
    & =\tilde{J}^0_{\omega,c}.
  \end{align*}
Next, for any $\varphi\in \th\setminus\ltl{0}$ with
$K_{\omega,c}\sts{\varphi}<0$. By Lemma \ref{Lem:sub:smallscal} and
Lemma \ref{Lem:knear0}, there exists $\lambda_0\in\sts{0,1}$ such
that $K_{\omega,c}\sts{\lambda_0 \varphi}=0.$  The monotonicity
\eqref{H:prop} of the functional $H_{\omega, c}$ implies that
  \begin{equation*}
    J_{\omega,c}\sts{\lambda_0 \varphi} = H_{\omega,c}\sts{\lambda_0 \varphi}<H_{\omega,c}\sts{\varphi}.
  \end{equation*}
  Hence, we have $J^0_{\omega,c}\leqslant \tilde{J}^0_{\omega,c}$, which implies $(1)$.

  Next, we show $(2)$. On one hand, let $\varphi$ be any minimizer for $\tilde{J}^0_{\omega,c},$ i.e.
    \begin{equation*}
        \varphi\in \th\setminus\ltl{0} \text{ with } K_{\omega,c}\sts{\varphi}\leqslant 0\text{ and } H_{\omega,c}\sts{\varphi}=\tilde{J}^0_{\omega,c}.
    \end{equation*}
    In order to show that $\varphi$ is also a minimizer for $J^0_{\omega,c}$, we only need to show that
    $K_{\omega,c}\sts{\varphi}=0.$
 We argue by contradiction. Assume that $K_{\omega,c}\left( \varphi\right)<0,$
 by Lemma \ref{Lem:sub:smallscal} and Lemma \ref{Lem:knear0}, there exists $\lambda_0\in\sts{0,1}$ which is dependent on $\varphi$ such that
    \begin{equation*}
      K_{\omega,c}\sts{\lambda_0 \varphi}=0
    \end{equation*}
    and
    \begin{equation*}
      K_{\omega,c}\sts{\lambda \varphi}<0,\quad \text{for any } \lambda\in\left(\lambda_0,1\right].
    \end{equation*}
Thus  by the monotonicity \eqref{H:prop} of the functional
$H_{\omega,c}$, we obtain that
\begin{equation}
  \label{eq:sub:minimizer-value-lem}
  \tilde{J}^0_{\omega,c}=H_{\omega,c}\left( \varphi \right)>H_{\omega,c}\left( \lambda_0 \varphi \right)=J_{\omega,c}\left( \lambda_0\varphi \right)\geqslant J^0_{\omega,c}=\tilde{J}^0_{\omega,c},
\end{equation}
which is a contradiction. Hence,
\begin{math}
  K_{\omega,c}\left( \varphi \right)=0
\end{math}
and
\begin{math}
   \varphi
\end{math}
is also a minimizer for $J^0_{\omega,c}.$ On the other hand, let
\begin{math}
   \varphi
\end{math}
be any minimizer for $J^0_{\omega,c},$ i.e.
\begin{equation*}
    \varphi\in \th\setminus\ltl{0} \text{ with } K_{\omega,c}\sts{\varphi}=0, \text{ and } J_{\omega,c}\sts{\varphi}=J^0_{\omega,c}.
\end{equation*}
Then we have
\begin{equation*}
  \tilde{J}^0_{\omega,c}\leqslant H_{\omega,c}\left( \varphi \right)=J_{\omega,c}\left( \varphi \right)=J^0_{\omega,c}=\tilde{J}^0_{\omega,c}.
\end{equation*}
Hence,
\begin{math}
  \varphi
\end{math}
is also a minimizer for $\tilde{J}^0_{\omega,c}.$ This completes the proof.
\end{proof}

Now, we can use the non-increasing rearrangement technique in
\cite{LiebL:book} to show the existence of minimizer to
\eqref{Sub:mini:eq:struct}.

\begin{lemma}
\label{Lem:sub:exist} There exists at least one minimizer for the
minimization problem \eqref{Sub:mini:eq:struct}.
\end{lemma}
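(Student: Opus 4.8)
The plan is to solve the relaxed problem \eqref{Sub:mini:ineq:struct} defining $\tilde{J}^0_{\omega,c}$ and then invoke Lemma \ref{lem:sub:equiminimization} to transfer the minimizer back to \eqref{Sub:mini:eq:struct}. First I would fix a minimizing sequence $\{\varphi_n\}\subset\th\setminus\{0\}$ with $K_{\omega,c}\sts{\varphi_n}\le 0$ and $H_{\omega,c}\sts{\varphi_n}\to\tilde{J}^0_{\omega,c}$, and write $\varphi_n=e^{i\frac{c}{2}x}\phi_n$ with $\phi_n\in H^1\sts{\R,\C}$. Exactly as in the computation carried out in the proof of Lemma \ref{Lem:knear0}, the phase $e^{i\frac{c}{2}x}$ cancels the cross (momentum) terms, so that both functionals involve $\phi_n$ only through its gradient and its moduli:
\begin{align*}
H_{\omega,c}\sts{\varphi_n}&=\tfrac14\Big(\normdh{\phi_n}^2+\sts{\omega-\tfrac{c^2}{4}}\normto{\phi_n}^2+\tfrac1{16}\normsix{\phi_n}^6\Big),\\
K_{\omega,c}\sts{\varphi_n}&=\normdh{\phi_n}^2+\sts{\omega-\tfrac{c^2}{4}}\normto{\phi_n}^2+\tfrac{c}{2}\normf{\phi_n}^4-\tfrac{3}{16}\normsix{\phi_n}^6.
\end{align*}
This is the decisive structural gain: the minimization for $\varphi$ in $\th$ has become an essentially real, momentum-free minimization for $\phi$, to which rearrangement is applicable.

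Next I would symmetrize. Replacing $\phi_n$ by the symmetric decreasing rearrangement $\phi_n^{*}$ of $\abs{\phi_n}$ (in the sense of \cite{LiebL:book}) preserves every $L^p$ norm and, by the diamagnetic inequality combined with the P\'olya--Szeg\H{o} inequality, does not increase $\normdh{\phi_n}$. Since $4\omega>c^2$ the weight $\omega-\frac{c^2}{4}$ is positive, so both displays above can only decrease; hence $\varphi_n^{*}:=e^{i\frac{c}{2}x}\phi_n^{*}$ still lies in $\th\setminus\{0\}$, still satisfies $K_{\omega,c}\sts{\varphi_n^{*}}\le 0$, and still realizes $H_{\omega,c}\sts{\varphi_n^{*}}\to\tilde{J}^0_{\omega,c}$. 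Relabelling, I may assume each $\phi_n$ is nonnegative, radial and nonincreasing. The convergence $H_{\omega,c}\sts{\varphi_n}\to\tilde{J}^0_{\omega,c}$ together with $\omega-\frac{c^2}{4}>0$ bounds $\{\phi_n\}$ in $\dot{H}^1\cap L^2\cap L^6$; in particular it is bounded in $\dot{H}^1\cap L^2$, so Lemma \ref{lem:compact} (with $p=2$) yields, along a subsequence, a radial nonincreasing limit $\phi$ with $\phi_n\to\phi$ strongly in $L^q$ for every $2<q<\infty$ and $\phi_n\rightharpoonup\phi$ weakly in $\dot{H}^1\cap L^2$.

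The heart of the argument, and the step I expect to be the main obstacle, is to rule out vanishing, i.e. to prove $\phi\neq 0$. Suppose not; then $\normf{\phi_n}\to 0$ and $\normsix{\phi_n}\to 0$, so $K^N_{\omega,c}\sts{\varphi_n}\to 0$. Since $K_{\omega,c}\sts{\varphi_n}\le 0$ forces $0\le K^Q_{\omega,c}\sts{\varphi_n}\le K^N_{\omega,c}\sts{\varphi_n}$, this would give $K^Q_{\omega,c}\sts{\varphi_n}\to 0$; but then Lemma \ref{Lem:knear0} yields $K_{\omega,c}\sts{\varphi_n}>0$ for all large $n$, contradicting $K_{\omega,c}\sts{\varphi_n}\le 0$. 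Hence $\phi\neq 0$. This is precisely where the relaxed constraint $K_{\omega,c}\le 0$, rather than a fixed mass, does the work: it prevents the minimizing profile from spreading out to zero.

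It then remains to verify admissibility and optimality of $\varphi:=e^{i\frac{c}{2}x}\phi$. Writing $K_{\omega,c}=K^Q_{\omega,c}-K^N_{\omega,c}$ and combining the weak lower semicontinuity of $K^Q_{\omega,c}$ (a positively weighted sum of the $\dot{H}^1$ and $L^2$ seminorms of $\phi$) with the strong convergence $K^N_{\omega,c}\sts{\varphi_n}\to K^N_{\omega,c}\sts{\varphi}$, I obtain $K_{\omega,c}\sts{\varphi}\le\liminf_{n}K_{\omega,c}\sts{\varphi_n}\le 0$, so $\varphi$ is admissible for $\tilde{J}^0_{\omega,c}$. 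Applying the same lower semicontinuity to the three nonnegative terms of $H_{\omega,c}$ gives $H_{\omega,c}\sts{\varphi}\le\liminf_{n}H_{\omega,c}\sts{\varphi_n}=\tilde{J}^0_{\omega,c}$, and since $\varphi$ is admissible the reverse inequality holds by definition of the infimum, so $H_{\omega,c}\sts{\varphi}=\tilde{J}^0_{\omega,c}$. Thus $\varphi$ is a minimizer of \eqref{Sub:mini:ineq:struct}, and by Lemma \ref{lem:sub:equiminimization}(2) it is a minimizer of \eqref{Sub:mini:eq:struct} as well, which completes the proof.
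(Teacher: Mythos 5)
Your proof is correct and follows essentially the same route as the paper's: pass to the relaxed problem \eqref{Sub:mini:ineq:struct}, reduce to real radial nonincreasing profiles $\phi_n$ via Schwarz symmetrization (the phase $e^{i\frac{c}{2}x}$ cancelling the momentum term), apply Lemma \ref{lem:compact}, rule out vanishing through Lemma \ref{Lem:knear0}, conclude by weak lower semicontinuity, and transfer back via Lemma \ref{lem:sub:equiminimization}. Your vanishing argument ($0\leq K^Q_{\omega,c}\sts{\varphi_n}\leq K^N_{\omega,c}\sts{\varphi_n}\to 0$ directly, rather than the paper's $\liminf$ chain with a subsequence) is a mild streamlining of the same idea, and your explicit appeal to the diamagnetic inequality for complex $\phi_n$ just makes precise what the paper leaves implicit.
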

\begin{proof}
   Let $\ltl{\varphi_n}\subset \th\setminus\ltl{0}$ be a minimizing sequence of the constrained problem \eqref{Sub:mini:ineq:struct}, i.e.
  \begin{align*}
    K_{\omega,c}\sts{\varphi_n}\leqslant 0,\quad H_{\omega,c}\sts{\varphi_n}\geqslant J^0_{\omega,c} \text{~~and~~} \lim_{n\to\infty}H_{\omega,c}\sts{\varphi_n}= J^0_{\omega,c}.
  \end{align*}
By definition of $\th,$ there exists a sequence $\ltl{\phi_n}\in
H^1\sts{\R,\C}\setminus\ltl{0}$ such that
    \begin{equation*}
        ~~\varphi_n=e^{\i \frac{c}{2}x}\phi_n~~~~\text{~~and~~}
        \normth{\varphi_n}^2=\normdh{\phi_n}^2+\sts{\omega-\frac{c^2}{4}}\normto{\phi_n}^2
    \end{equation*}
    Without loss of generality, we may assume that $\phi_n$ are real valued, radially symmetric and non-increasing about the origin of $\R$.
    Indeed, for any $\psi\in \th\setminus\ltl{0}$ with $\psi=e^{\i \frac{c}{2} x}\phi$ and $\normth{\psi}^2=\normdh{\phi}^2+\sts{\omega-\frac{c^2}{4}}\normto{\phi}^2,$
    let $\phi^{\ast}$ be the Schwarz symmetrization of $\phi,$ and $\psi^{\ast}= e^{\i \frac{c}{2} x}\phi^{\ast},$
    by Schwarz rearrangement inequality in \cite[Section 7.17]{LiebL:book}, it is easy to check that
    \begin{align*}
        H_{\omega,c}\sts{\psi}&=\frac{1}{4}\int\sts{{\abs{\partial_x \sts{e^{-i\frac{c}{2}x}\psi}}^2
    +\sts{\omega-\frac{c^2}{4}}\abs{\psi}^2} +\frac{1}{16}\abs{\psi}^6}dx
        \\
        &=\frac{1}{4}\int\sts{\abs{\partial_x \phi}^2
    +\sts{\omega-\frac{c^2}{4}}\abs{\phi}^2 +\frac{1}{16}\abs{\phi}^6 }dx
        \\
        &\geqslant \frac{1}{4}\int\sts{\abs{\partial_x \phi^{\ast}}^2
    +\sts{\omega-\frac{c^2}{4}}\abs{\phi^{\ast}}^2 +\frac{1}{16}\abs{\phi^{\ast}}^6 }dx
    \\
    &=\frac{1}{4}\int\sts{{\abs{\partial_x \sts{e^{-i\frac{c}{2}x}\psi^{\ast}}}^2
    +\sts{\omega-\frac{c^2}{4}}\abs{\psi^{\ast}}^2} +\frac{1}{16}\abs{\psi^{\ast}}^6}dx
    \\
             &=  H_{\omega,c}\sts{\psi^{\ast}},
    \end{align*}
    a similar argument shows that
    \begin{equation*}
        K_{\omega,c}\sts{\psi}\geqslant K_{\omega,c}\sts{\psi^{\ast}}.
    \end{equation*}
    Since $\displaystyle \lim_{n\to\infty}H_{\omega,c}\sts{\varphi_n}= J^0_{\omega,c},$ we have $\varphi_n$ is bounded in $\th,$
    which means $\phi_n$ is bounded in $H^1\sts{\R,\R}.$ It follows from Lemma \ref{lem:compact} that
     there exists $\phi\in H^1\sts{\R,\R}$ such that
     \begin{align*}
        \lim_{n\to\infty}\phi_n &= \phi,\quad\text{weakly in } H^1\sts{\R,\R},
        \\
        \lim_{n\to\infty}\phi_n &= \phi,\quad\text{strongly in } L^6\sts{\R,\R},
        \\
        \lim_{n\to\infty}\phi_n &= \phi,\quad\text{strongly in } L^4\sts{\R,\R}.
    \end{align*}
     Hence, from the definition of $\th,$
          \begin{align*}
        \lim_{n\to\infty}\varphi_n = \varphi,\quad &\text{weakly in } \th,
        \\
        \lim_{n\to\infty}\varphi_n = \varphi,\quad &\text{strongly in } L^6\sts{\R,\C}.
        \\
        \lim_{n\to\infty}\varphi_n = \varphi,\quad &\text{strongly in } L^4\sts{\R,\C},
        \end{align*}
    where $\varphi=e^{i\frac{c}{2} x}\phi.$
     It follows from the weak lower continuity of the norm that
     \begin{align*}
       H_{\omega,c}\sts{\varphi} &\leqslant \lim_{n\to\infty}H_{\omega,c}\sts{\varphi_n}=J^0_{\omega,c} \\
       K_{\omega,c}\sts{\varphi} &\leqslant \liminf_{n\to\infty}K_{\omega,c}\sts{\varphi_n}\leqslant 0.
     \end{align*}

     Next, we shall prove $\varphi\neq 0.$ Suppose that $\varphi=0,$
     then we have
     \begin{align*}
      0\leq \liminf_{n\to\infty}K^Q_{\omega,c}\sts{\varphi_n}
       &=
        \liminf_{n\to\infty}\sts{ K_{\omega,c}\sts{\varphi_n} +K^N_{\omega,c}\sts{\varphi_n} }
        \\
        &\leqslant
            \liminf_{n\to\infty}K_{\omega,c}\sts{\varphi_n} + \lim_{n\to\infty}K^N_{\omega,c}\sts{\varphi_n}\leqslant 0.
     \end{align*}
By Lemma \ref{Lem:knear0}, there exists a subsequence
$\varphi_{n_k}$ such that
     \begin{equation*}
       K_{\omega,c}\sts{\varphi_{n_k}}>0, \quad \text{for $k$ large enough,}
     \end{equation*}
It is a contradiction with the choice of $\varphi_n$. Thus
$\varphi\neq 0,$ Hence $\varphi$ is a minimizer of
\eqref{Sub:mini:ineq:struct}. By Lemma \ref{lem:sub:equiminimization}, $\varphi$
is also a minimizer of \eqref{Sub:mini:eq:struct}.
\end{proof}
Since $J_{\omega,c}$ and $K_{\omega,c}$ are $C^1$ functionals on $\th,$ by the above lemma,
it is easy to see that if $\varphi\in \th\setminus\ltl{0}$ is a
minimizer for \eqref{Sub:mini:eq:struct}, then there exists $\eta\in\R$ such
that
\begin{equation*}
  \dual{J_{\omega,c}'\sts{\varphi}}{\psi} =\eta \dual{K_{\omega,c}'\sts{\varphi}}{\psi},\quad\text{for any~~}\psi\in \th,
\end{equation*}
specially, if we take $\psi=\varphi$ in the above equation, then it
follows from \eqref{eq:varphi:II:invariant} that
\begin{align*}
  0=K_{\omega,c}\sts{\varphi} &=   \int\sts{\abs{\partial_x\varphi}^2-\frac{3}{16}\abs{\varphi}^6 +\omega
  \abs{\varphi}^2
  -\frac{c}{2}{\Im\sts{\overline{\varphi}\varphi_x}+\frac{c}{2}\abs{\varphi}^4
  }}
  \\
  &=\eta \int\sts{2\abs{\partial_x\varphi}^2 -\frac{9}{8}\abs{\varphi}^6+2\omega\abs{\varphi}^2- 2c\Im\sts{\overline{\varphi}\varphi_x}
  +2c\abs{\varphi}^4}
  \\
  &=
    4\eta K_{\omega,c}\sts{\varphi}-2\eta\int\sts{\abs{\partial_x\varphi}^2 +\omega\abs{\varphi}^2- c\Im\sts{\overline{\varphi}\partial_x\varphi}} -\frac{3}{8}\eta \int\abs{\varphi}^6
  \\
  &= - 2\eta\normth{\varphi}^2 -\frac{3}{8}\eta \normsix{\varphi}^4.
\end{align*}
Since $\varphi\in\th\setminus\ltl{0},$ we obtain $\eta=0$ and
\begin{equation*}
  J_{\omega,c}'\sts{\varphi}=0,\quad \text{in~~} \th^*,
\end{equation*}
i.e. $\varphi$ satisfies \eqref{Eq:varphi:II} in sense of $\th.$
Since $\varphi(x)=e^{i\frac{c}{2} x}\phi(x)$, we have
\begin{equation}\label{Sub:EII:H1}
  \sts{\omega-\frac{c^2}{4}}\phi-\partial^2_x \phi
    -\frac{3}{16}\abs{\phi}^4\phi
  = -\frac{c}{2}\abs{\phi}^2\phi,\quad\text{in~~} H^1\sts{\R, \C}.
\end{equation}

Note that $\phi_{\omega, c}$ in \eqref{solt:sub} is a solution to
\eqref{Sub:EII:H1}. By the uniqueness result (Theorem 8.1.6 in
\cite{Caz:NLS:book}, ODE argument), we have
\begin{proposition}\label{exist:subcritical case}
For subcritical case $4\omega>c^2, $ up to the phase rotation and
spatial translation symmetries, \eqref{Eq:varphi:I} has a unique
solution $\varphi_{\omega,c}$ in $H^1(\R, \C)$, where
    \begin{equation*}
      \varphi_{\omega,c}\sts{x}=e^{\i\frac{c}{2} x}\left[\frac{\sqrt{\omega}}{4\omega-c^2}
\left\{\cosh\big(\sqrt{4\omega-c^2}x\big)-\frac{c}{2\sqrt{\omega}}\right\}\right]^{-1/2}.
    \end{equation*}
\end{proposition}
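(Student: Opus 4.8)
The plan is to read the statement as the composition of two facts already (almost) in hand: an existence half supplied by the variational minimization, and a uniqueness half supplied by the one-dimensional ODE theory. The conceptual pivot is the structure analysis of Lemma~\ref{lem:structure}: writing $\varphi(x)=e^{i\frac{c}{2}x}\phi(x)$, a nontrivial $H^1(\R,\C)$ solution of \eqref{Eq:varphi:I} corresponds, through the equivalence of \eqref{Eq:phi:I} and \eqref{Eq:phi:II}, exactly to a nontrivial $H^1(\R,\C)$ solution $\phi$ of the genuinely semilinear equation \eqref{Eq:phi:II}. Hence the entire question about \eqref{Eq:varphi:I} is transferred to the solution set of \eqref{Eq:phi:II} (equivalently \eqref{Sub:EII:H1}) in the regime $4\omega>c^2$.

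For existence I would simply harvest the variational construction. Lemma~\ref{Lem:sub:exist} produces a minimizer $\varphi\in\th\setminus\ltl{0}$ of \eqref{Sub:mini:eq:struct}, and the Lagrange multiplier computation carried out just above forces the multiplier $\eta$ to vanish, so that $J_{\omega,c}'(\varphi)=0$ and $\varphi$ solves \eqref{Eq:varphi:II}. Undoing the gauge $\varphi=e^{i\frac{c}{2}x}\phi$, the profile $\phi$ solves \eqref{Sub:EII:H1}, and by the structure analysis the same $\varphi$ then solves \eqref{Eq:varphi:I}. This settles the existence of a nontrivial solution.

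For uniqueness, up to phase rotation and spatial translation, I would reduce to the real positive even profile. By Remark~\ref{rem:rvs}, after a fixed phase and a translation any nontrivial solution of \eqref{Eq:phi:II} may be taken positive, even and real valued, so it suffices to prove uniqueness inside the class of positive even $H^1(\R,\R)$ solutions of \eqref{Sub:EII:H1}. For such $\phi$ this is an autonomous second-order ODE with a cubic-quintic nonlinearity; multiplying by $\phi'$ and integrating yields a first integral, and the decay $\phi,\phi'\to 0$ at infinity pins the energy level to zero. Integrating the resulting separable first-order relation explicitly, and verifying directly that the closed form \eqref{solt:sub} satisfies \eqref{Sub:EII:H1}, identifies $\phi_{\omega,c}$ as the unique positive even decaying solution; this is precisely the content invoked from Theorem~8.1.6 in \cite{Caz:NLS:book} (see also Theorem~5 in \cite{BerestLions:NLS:GS}). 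Consequently every nontrivial $H^1$ solution of \eqref{Eq:varphi:I} equals $e^{i\frac{c}{2}x}\phi_{\omega,c}$ up to the two symmetries, which is the asserted $\varphi_{\omega,c}$.

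The main obstacle is the uniqueness step, not existence: existence is delivered for free by the minimization, whereas uniqueness forces one to normalize a possibly complex $H^1$ solution to the canonical real positive even profile and then to rule out any competing positive decaying branch of the integrated ODE. The normalization rests on Remark~\ref{rem:rvs}, and the branch-counting is where the exponential decay available for $4\omega>c^2$ (as opposed to the polynomial decay of the critical case) makes the phase-plane analysis clean.
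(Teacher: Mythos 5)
Your proposal is correct and follows the paper's proof essentially step for step: the structure analysis of Lemma~\ref{lem:structure} reduces \eqref{Eq:varphi:I} to the semilinear equation \eqref{Sub:EII:H1}, existence is harvested from the minimizer of Lemma~\ref{Lem:sub:exist} together with the vanishing of the Lagrange multiplier $\eta$, and uniqueness (after the normalization to a positive, even, real profile via Remark~\ref{rem:rvs}) is exactly the one-dimensional ODE argument of Theorem~8.1.6 in \cite{Caz:NLS:book} that the paper invokes. Your explicit first-integral and separation-of-variables discussion merely unpacks that cited ODE uniqueness, so the two arguments coincide.
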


\subsection{Variational characterization for the critical case $4\omega=c^2, c>0$} \label{sect:vc:critical}
For the critical parameters $4\omega=c^2, c>0$,  the quadratic terms
of the functionals $J_{\omega,c}\sts{\varphi}$ and
$K_{\omega,c}\sts{\varphi}$ do not enjoy coercivity in $H^1$, hence
we can not preform the variational method (minimization) in the
framework of \cite{ColinOhta-DNLS} directly. Here we combine the
variational method with the structure analysis to show
the existence of the solitary waves to \eqref{Eq:varphi:II}. The similar structure analysis also occurs in \cite{BethuelGS-GP-II, Gerard:GP,
Maris:NLS}. We first solve the minimization problem in the weak space $X_c$ with
structure, then show the uniqueness\footnote{Up to the phase
rotation and spatial translation symmetries.} and the $H^1$
regularity of the solitary waves. Therefore we can solve  the
minimization problem in the energy space.

Based on the structure analysis in Section \ref{sect:structure:compactness}, we now consider the following space
   \begin{equation}\label{Def:space_crit}
     X_c:=\ltl{ \varphi\in\mathcal{S}'
        ~:~
        \varphi\sts{x}=e^{\i\frac{c}{2} x}\phi\sts{x}
        \,\, \text{with}\,\, \phi\in \sts{\dot{H}^1\cap L^4}\sts{\R,\C}}
   \end{equation}
with the norm
       \begin{equation*}
        \normx{\varphi}:=\normdh{\phi}+\normf{\phi}\quad\text{with } \phi\in \sts{\dot{H}^1\cap L^4}\sts{\R,\C}.
    \end{equation*}
   It is clear that $\sts{X_c,\normx{\cdot}}$ is a Banach space and $H^1\sts{\R, \C}\hookrightarrow X_c$.

First, we consider the functional $J_{\omega,c}$ on  $X_c$ instead
of $H^1.$
   Similarly, it is easy to check that $J_{\omega,c}$ is (at least) a $C^2$ functional and unbounded from below on
   $X_c$.
Moreover, $\varphi\in X_c\setminus\ltl{0}$ is a solution of
\eqref{Eq:varphi:II} if and only if $\varphi\in X_c\setminus\ltl{0}$
is a critical point of the functional $J_{\omega,c}$.


Similarly to the subcritical case, we consider the following
minimization problem
\begin{equation}\label{Crit:mini:eq:struct}
J^0_{\omega,c}=\inf
\ltl{J_{\omega,c}\sts{\varphi}~:~K_{\omega,c}\sts{\varphi}=0,~~\varphi\in
X_c\setminus\ltl{0}},
\end{equation}
and  define
\begin{align*}
  K^{Q}_{\omega,c}\sts{\varphi} &:= \int\sts{\abs{\partial_x\varphi}^2 +\omega\abs{\varphi}^2
  -c\Im\sts{\overline{\varphi}\partial_x\varphi}
  +\frac{c}{2}\abs{\varphi}^4
  }dx
  \\
  &=\int\sts{ \abs{ \partial_x\sts{ e^{-\i \frac{c}{2}x }\varphi }}^2 +\frac{c}{2}\abs{\varphi}^4
  }dx \geq 0,
  \\
  K^{N}_{\omega,c}\sts{\varphi} &:=K^Q_{\omega,c}\sts{\varphi}-K_{\omega,c}\sts{\varphi}=\frac{3}{16}\int\abs{\varphi}^6 dx.
\end{align*}
By the definition,  we have for $\lambda>0$ \[
K^Q_{\omega,c}\sts{\lambda\varphi} = \lambda^2
\int\sts{\abs{\partial_x\varphi}^2 +\omega\abs{\varphi}^2
  -c\Im\sts{\overline{\varphi}\partial_x\varphi}}dx+
\lambda^4\int \abs{\varphi}^4dx .\] This implies that
\begin{lemma}\label{Lem:crit:smallscal}For any $\varphi\in X_c \setminus\ltl{0}$, we
have \[ \lim_{\lambda \rightarrow 0 +}
K^Q_{\omega,c}\sts{\lambda\varphi} =0.\]
\end{lemma}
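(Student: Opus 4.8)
The plan is to evaluate $K^Q_{\omega,c}\sts{\lambda\varphi}$ explicitly as a polynomial in $\lambda$ with finite coefficients and then pass to the limit $\lambda\to0^+$. Since $\varphi\in X_c\setminus\ltl{0}$, I would write $\varphi\sts{x}=e^{\i\frac{c}{2}x}\phi\sts{x}$ with $\phi\in\sts{\dot H^1\cap L^4}\sts{\R,\C}$, so that $\normdh{\phi}$ and $\normf{\phi}$ are both finite by the very definition of $\normx{\varphi}$.

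First I would insert this structure into the scaling identity displayed just above the lemma. Using the gauge computation $\abs{\partial_x\sts{e^{-\i\frac{c}{2}x}\varphi}}^2=\abs{\partial_x\phi}^2$ together with the algebra underlying the second form of $K^Q_{\omega,c}$, the quadratic bracket collapses to
\[
\int\sts{\abs{\partial_x\varphi}^2+\omega\abs{\varphi}^2-c\Im\sts{\overline{\varphi}\partial_x\varphi}}dx
 = \int\abs{\partial_x\phi}^2\,dx + \sts{\omega-\tfrac{c^2}{4}}\int\abs{\phi}^2\,dx.
\]
The decisive point, and the only place the critical hypothesis $4\omega=c^2$ enters, is that the coefficient $\omega-\tfrac{c^2}{4}$ vanishes, which removes the $\int\abs{\phi}^2$ term entirely. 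This cancellation is essential: $\phi$ is only assumed to lie in $\dot H^1\cap L^4$ and need not belong to $L^2$, so the discarded term has no reason to be finite --- it is exactly this feature that forces one to work in $X_c$ rather than in $\th$ in the critical regime.

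After the cancellation one is left with
\[
K^Q_{\omega,c}\sts{\lambda\varphi}=\lambda^2\normdh{\phi}^2+\frac{c}{2}\lambda^4\normf{\phi}^4,
\]
a sum of two terms with finite, $\lambda$-independent coefficients. Letting $\lambda\to0^+$ immediately yields $K^Q_{\omega,c}\sts{\lambda\varphi}\to0$, which is the assertion. I do not expect any genuine obstacle here: the whole argument is the gauge identity followed by the observation that the critical relation kills the one quadratic term not controlled by the $X_c$ norm; the only care needed is to confirm, directly from $\phi\in\dot H^1\cap L^4$, that the two surviving integrals are finite.
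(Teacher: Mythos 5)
Your proof is correct and is essentially the paper's own argument: the paper proves the lemma by exactly this scaling identity, $K^Q_{\omega,c}\sts{\lambda\varphi}=\lambda^2\normdh{\phi}^2+\frac{c}{2}\lambda^4\normf{\phi}^4$, with the critical cancellation $\omega-\frac{c^2}{4}=0$ built into the gauged form of $K^Q_{\omega,c}$ stated in its definition (the paper's displayed formula merely omits the harmless factor $\frac{c}{2}$ on the quartic term, a typo). Your explicit observation that the $\int\abs{\phi}^2$ term must cancel because $\phi\in\dot{H}^1\cap L^4$ need not lie in $L^2$ is the same point the paper makes implicitly by working in $X_c$.
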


The next lemma exhibits the behavior of $K_{\omega,c}$ near the origin of $X_c.$
\begin{lemma}
\label{Lem:crit:knear0}
  For any bounded sequence $\ltl{\varphi_n}\subset X_c\setminus\ltl{0}$ with
  \begin{equation*}
    \lim_{n\to\infty}K^Q_{\omega,c}\sts{\varphi_n}=0.
  \end{equation*}
  We have for large $n,$
  \begin{equation*}
    K_{\omega,c}\sts{\varphi_n}>0.
  \end{equation*}
\end{lemma}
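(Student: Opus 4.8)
The plan is to adapt the subcritical argument of Lemma \ref{Lem:knear0}, replacing the mass-based Gagliardo--Nirenberg estimate (unavailable here, since in the critical regime $K^Q_{\omega,c}$ no longer controls $\normto{\cdot}$) by a \emph{scale-invariant} interpolation involving only $\normdh{\cdot}$ and $\normf{\cdot}$. First I would pass to profiles: writing $\varphi_n=e^{\i\frac{c}{2}x}\phi_n$ with $\phi_n\in\sts{\dot H^1\cap L^4}\sts{\R,\C}$, the definitions of $K^Q_{\omega,c}$ and $K^N_{\omega,c}$ give $K^Q_{\omega,c}\sts{\varphi_n}=\normdh{\phi_n}^2+\frac{c}{2}\normf{\phi_n}^4$ and $K^N_{\omega,c}\sts{\varphi_n}=\frac{3}{16}\normsix{\phi_n}^6$, so the hypothesis $K^Q_{\omega,c}\sts{\varphi_n}\to0$ forces both $a_n:=\normdh{\phi_n}^2\to0$ and $b_n:=\normf{\phi_n}^4\to0$.

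The crux is the one-dimensional interpolation inequality
\[
  \normsix{\phi}^6\lesssim\normdh{\phi}^{2/3}\normf{\phi}^{16/3},
\]
whose exponents are forced by the scaling $\phi\mapsto\phi\sts{\lambda\,\cdot}$, under which both sides transform like $\lambda^{-1}$. To prove it I would first record, for $\phi$ vanishing at infinity, the pointwise bound
\[
  \abs{\phi\sts{x}}^3=\int_{-\infty}^x\frac{d}{dy}\abs{\phi}^3\,dy\le3\int\abs{\phi}^2\abs{\phi'}\,dy\le3\normf{\phi}^2\normdh{\phi}
\]
by Cauchy--Schwarz, which gives $\|\phi\|_{L^\infty}\lesssim\normf{\phi}^{2/3}\normdh{\phi}^{1/3}$; then $\normsix{\phi}^6=\int\abs{\phi}^2\abs{\phi}^4\le\|\phi\|_{L^\infty}^2\normf{\phi}^4$ yields the claim.

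With this estimate the conclusion is immediate: $K^N_{\omega,c}\sts{\varphi_n}\lesssim a_n^{1/3}b_n^{4/3}\le\sts{a_n+b_n}^{5/3}$, since $a_n\le a_n+b_n$ and $b_n\le a_n+b_n$. As $\varphi_n\neq0$ we have $K^Q_{\omega,c}\sts{\varphi_n}=a_n+\frac{c}{2}b_n>0$, whence
\[
  K_{\omega,c}\sts{\varphi_n}=K^Q_{\omega,c}\sts{\varphi_n}-K^N_{\omega,c}\sts{\varphi_n}\ge K^Q_{\omega,c}\sts{\varphi_n}\sts{1-C\sts{a_n+b_n}^{2/3}},
\]
and the right-hand side is strictly positive once $n$ is large enough that $C\sts{a_n+b_n}^{2/3}<1$.

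The main obstacle is precisely the loss of coercivity: unlike the subcritical regime, neither $J_{\omega,c}$ nor $K^Q_{\omega,c}$ dominates the mass $\normto{\cdot}$, so the relation $K^N_{\omega,c}=\o{K^Q_{\omega,c}}$ cannot be read off from the mass-based Gagliardo--Nirenberg inequality used before. The scale-invariant interpolation above resolves this, its total homogeneity $5/3>1$ making the sextic term negligible against the quadratic-plus-quartic quantity $K^Q_{\omega,c}$ as the latter tends to zero. Note that, in contrast to the subcritical proof, the boundedness of $\{\varphi_n\}$ in $X_c$ plays no role in this argument.
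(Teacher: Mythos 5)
Your proof is correct and takes essentially the same route as the paper's: both use the gauge structure $\varphi_n=e^{\i\frac{c}{2}x}\phi_n$ to reduce $K^Q_{\omega,c}\sts{\varphi_n}$ to $\normdh{\phi_n}^2+\tfrac{c}{2}\normf{\phi_n}^4$ and then control $K^N_{\omega,c}$ by the identical scale-invariant interpolation $\normsix{\phi_n}^6\lesssim\normdh{\phi_n}^{2/3}\normf{\phi_n}^{16/3}$, which the paper cites as Gagliardo--Nirenberg plus H\"older and splits by Young's inequality into $\normdh{\phi_n}^4+\normf{\phi_n}^{32/5}=\o{K^Q_{\omega,c}\sts{\varphi_n}}$. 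Your only departures are cosmetic --- you prove the interpolation from the pointwise bound rather than citing it, and keep the product form $a_n^{1/3}b_n^{4/3}\leq\sts{a_n+b_n}^{5/3}$ --- and your remark that boundedness in $X_c$ is never needed applies equally to the paper's argument.
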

\begin{proof}
  Since $\varphi_n\in X_c\setminus\ltl{0}$, there exists $\phi_n\in \sts{\dot{H}^1\cap L^4}\sts{\R,\C}$, such that
  $\varphi_n\sts{x}=e^{\i \frac{c}{2}x}\phi_n\sts{x}$ with
   \begin{align*}
     K^Q_{\omega,c}\sts{\varphi_n} &= \int\sts{\abs{\partial_x\varphi_n}^2+
     \omega\abs{\varphi_n}^2 - c\Im\sts{\overline{\varphi}_n \partial_x\varphi_n}+\frac{c}{4}\abs{\varphi_n}^4 }dx \\
     & =\int\sts{ \abs{\partial_x
     \sts{e^{-\i\frac{c}{2}x}\varphi_n}}^2+\frac{c}{4}\abs{e^{-\i\frac{c}{2}x}\varphi_n}^4}dx
     \\
     &= \int \sts{\abs{\partial_x\phi_n}^2+\frac{c}{4}\abs{\phi_n}^4 }dx.
   \end{align*}

   By $\lim_{n\to\infty}K^Q_{\omega,c}\sts{\varphi_n}=0,$ it follows from the Gagliardo-Nirenberg and H\"{o}lder inequalities that
  \begin{align*}
    K^{N}_{\omega,c}\sts{\varphi_n} \approx \int\abs{\phi_n}^6
    \lesssim \big\|\phi_n\big\|^{2/3}_{\dot
    H^1}\normf{\phi_n}^{16/3}
  \lesssim \normdh{\phi_n}^4+\normf{\phi_n}^{32/5}
       =\o{ K^Q_{\omega,c}\sts{\varphi_n} }.
  \end{align*}
  Thus, for sufficiently large $n$, we have
  \begin{equation*}
    K_{\omega,c}\sts{\varphi_n}=K^Q_{\omega,c}\sts{\varphi_n}-K^{N}_{\omega,c}\sts{\varphi_n}\approx K^Q_{\omega,c}\sts{\varphi_n}>0.
  \end{equation*}
  This completes the proof.
\end{proof}
%
We now replace the functional $J_{\omega,c} $ in \eqref{Crit:mini:eq:struct}, which is unbounded from below,
 with a positive functional
$H_{\omega,c} $, while extending the minimizing region from
``$K_{\omega,c} =0$'' to ``$K_{\omega,c} \leq 0$''. Let
\begin{align}\label{Funct:H:crit}
  H_{\omega,c}\sts{\varphi} := & \; J_{\omega,c}\sts{\varphi}-\frac{1}{6}K_{\omega,c}\sts{\varphi}
  \nonumber \\
  =&\;\frac{1}{3} \int\sts{\abs{\varphi_x}^2 +\omega\abs{\varphi}^2-c\Im\sts{\overline{\varphi}\partial_x\varphi}
  +\frac{c}{8}\abs{\varphi}^4
  }dx \\
  \geq &\; 0.
\end{align}
In addition, for any $\varphi\in X_c\setminus\ltl{0} $ and
$0<\lambda_1 < \lambda_2$, we have the following monotonicity.
\begin{align}\label{H:crit:prop}
  H_{\omega,c} \sts{\lambda_1\varphi}<  H_{\omega,c}\sts{\lambda_2\varphi}.
\end{align}
 In order to find the minimizers of \eqref{Crit:mini:eq:struct}, we shall
consider the following constrained minimization problem
\begin{equation}\label{Crit:mini:ineq:struct}
  \tilde{J}^0_{\omega,c}=\inf\ltl{H_{\omega,c}\sts{\varphi}~:~K_{\omega,c}\sts{\varphi}\leqslant 0,~~\varphi\in X_c\setminus\ltl{0} },
\end{equation}
The following lemma shows that two minimization problems
\eqref{Crit:mini:eq:struct} and \eqref{Crit:mini:ineq:struct} are equivalent.
\begin{lemma}
\label{lem:crit:equiminimization}
  Let $J^0_{\omega,c}$ and $\tilde{J}^0_{\omega,c}$ be defined by \eqref{Crit:mini:eq:struct} and \eqref{Crit:mini:ineq:struct} respectively.
  Then we have  \begin{enumerate}
\item   $J^0_{\omega,c}=\tilde{J}^0_{\omega,c}>0.$
\item  any minimizer for \eqref{Crit:mini:eq:struct}  is also a minimizer for \eqref{Crit:mini:ineq:struct}, and vice versa.
\end{enumerate}
\end{lemma}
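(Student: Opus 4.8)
The plan is to reproduce \emph{verbatim} the structure of the subcritical argument in Lemma~\ref{lem:sub:equiminimization}, now carried out in the weaker space $X_c$ and powered by the critical-case auxiliary results Lemma~\ref{Lem:crit:smallscal} and Lemma~\ref{Lem:crit:knear0}, together with the monotonicity \eqref{H:crit:prop} and the nonnegativity of $H_{\omega,c}$ recorded in \eqref{Funct:H:crit}. The decomposition $H_{\omega,c}=J_{\omega,c}-\tfrac16 K_{\omega,c}$ is what makes the whole scheme work: it forces $H_{\omega,c}(\varphi)=J_{\omega,c}(\varphi)$ precisely on the constraint set $\ltl{K_{\omega,c}=0}$.

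First I would prove the equality $J^0_{\omega,c}=\tilde{J}^0_{\omega,c}$. Because $H_{\omega,c}=J_{\omega,c}$ whenever $K_{\omega,c}=0$, the infimum defining $J^0_{\omega,c}$ in \eqref{Crit:mini:eq:struct} coincides with the infimum of $H_{\omega,c}$ over the same set $\ltl{K_{\omega,c}=0}$, which dominates the infimum of $H_{\omega,c}$ over the larger set $\ltl{K_{\omega,c}\le 0}$; this gives $J^0_{\omega,c}\ge\tilde{J}^0_{\omega,c}$. For the reverse inequality, take any admissible $\varphi$ for \eqref{Crit:mini:ineq:struct} with $K_{\omega,c}(\varphi)<0$. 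Since $K_{\omega,c}(\lambda\varphi)$ is a polynomial in $\lambda$, it is continuous, and by Lemma~\ref{Lem:crit:smallscal} one has $K^Q_{\omega,c}(\lambda\varphi)\to 0$ as $\lambda\to 0^+$, so Lemma~\ref{Lem:crit:knear0} forces $K_{\omega,c}(\lambda\varphi)>0$ for small $\lambda$; hence there is $\lambda_0\in(0,1)$ with $K_{\omega,c}(\lambda_0\varphi)=0$. The monotonicity \eqref{H:crit:prop} then yields $J^0_{\omega,c}\le J_{\omega,c}(\lambda_0\varphi)=H_{\omega,c}(\lambda_0\varphi)<H_{\omega,c}(\varphi)$, and taking the infimum over all such $\varphi$ gives $J^0_{\omega,c}\le\tilde{J}^0_{\omega,c}$, whence equality.

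Next comes the positivity $\tilde{J}^0_{\omega,c}>0$, which is the one genuinely critical-case point, since in the critical regime $K^Q_{\omega,c}$ is \emph{not} coercive on $H^1$; this is exactly the difficulty the structure analysis is designed to remove. Suppose for contradiction $\tilde{J}^0_{\omega,c}=0$ and let $\ltl{\varphi_n}$ be a minimizing sequence with $K_{\omega,c}(\varphi_n)\le 0$ and $H_{\omega,c}(\varphi_n)\to 0$. Writing $\varphi_n=e^{i\frac{c}{2}x}\phi_n$ and using $4\omega=c^2$, the gradient identity $\int(\abs{\partial_x\varphi_n}^2+\omega\abs{\varphi_n}^2-c\Im(\overline{\varphi}_n\partial_x\varphi_n))=\normdh{\phi_n}^2$ reduces the two functionals to $H_{\omega,c}(\varphi_n)=\tfrac13(\normdh{\phi_n}^2+\tfrac{c}{8}\normf{\phi_n}^4)$ and $K^Q_{\omega,c}(\varphi_n)=\normdh{\phi_n}^2+\tfrac{c}{2}\normf{\phi_n}^4$. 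As $c>0$ and both integrands are nonnegative, $H_{\omega,c}(\varphi_n)\to 0$ forces $\normdh{\phi_n}\to 0$ and $\normf{\phi_n}\to 0$; thus $\ltl{\varphi_n}$ is bounded in $X_c$ and $K^Q_{\omega,c}(\varphi_n)\to 0$. Lemma~\ref{Lem:crit:knear0} then gives $K_{\omega,c}(\varphi_n)>0$ for large $n$, contradicting $K_{\omega,c}(\varphi_n)\le 0$. Hence $\tilde{J}^0_{\omega,c}>0$, and by the equality just proved $J^0_{\omega,c}>0$ as well, establishing $(1)$.

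Finally, for $(2)$ I mirror the subcritical proof exactly. If $\varphi$ minimizes $\tilde{J}^0_{\omega,c}$ and one assumed $K_{\omega,c}(\varphi)<0$, the same scaling step (Lemma~\ref{Lem:crit:smallscal}, Lemma~\ref{Lem:crit:knear0}) produces $\lambda_0\in(0,1)$ with $K_{\omega,c}(\lambda_0\varphi)=0$, and the monotonicity \eqref{H:crit:prop} gives the strict chain $\tilde{J}^0_{\omega,c}=H_{\omega,c}(\varphi)>H_{\omega,c}(\lambda_0\varphi)=J_{\omega,c}(\lambda_0\varphi)\ge J^0_{\omega,c}=\tilde{J}^0_{\omega,c}$, a contradiction; therefore $K_{\omega,c}(\varphi)=0$ and $\varphi$ minimizes $J^0_{\omega,c}$. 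Conversely, if $\varphi$ minimizes $J^0_{\omega,c}$ then $K_{\omega,c}(\varphi)=0$ gives $\tilde{J}^0_{\omega,c}\le H_{\omega,c}(\varphi)=J_{\omega,c}(\varphi)=J^0_{\omega,c}=\tilde{J}^0_{\omega,c}$, so $\varphi$ minimizes $\tilde{J}^0_{\omega,c}$. I expect the only real subtlety to be the positivity step above; every other step is formally identical to the subcritical case, the sole change being that the clean reduction of $H_{\omega,c}$ and $K^Q_{\omega,c}$ to integrals of $\abs{\partial_x\phi}^2$ and $\abs{\phi}^4$ in the structured space $X_c$ is what substitutes for the coercivity that is unavailable in $H^1$.
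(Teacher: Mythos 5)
Your proof is correct and follows essentially the same route as the paper: the identity $H_{\omega,c}=J_{\omega,c}$ on $\ltl{K_{\omega,c}=0}$ gives $J^0_{\omega,c}\geqslant \tilde{J}^0_{\omega,c}$, and the scaling argument via Lemma \ref{Lem:crit:smallscal}, Lemma \ref{Lem:crit:knear0} and the monotonicity \eqref{H:crit:prop} yields the reverse inequality and part (2) exactly as in the paper's proof. If anything your write-up is more complete, since your explicit contradiction argument for the positivity $\tilde{J}^0_{\omega,c}>0$ --- reducing $H_{\omega,c}$ and $K^Q_{\omega,c}$ via the structure $\varphi=e^{\i\frac{c}{2}x}\phi$ and $4\omega=c^2$ to $\frac{1}{3}\sts{\normdh{\phi}^2+\frac{c}{8}\normf{\phi}^4}$ and $\normdh{\phi}^2+\frac{c}{2}\normf{\phi}^4$, then invoking Lemma \ref{Lem:crit:knear0} on a minimizing sequence --- supplies a step that the paper's written proof asserts in the statement but leaves implicit.
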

\begin{proof}
  First, by definition, we have
  \begin{equation*}
    H_{\omega,c}\sts{\varphi}=J_{\omega,c}\sts{\varphi} \quad\text{for any } \varphi\in X_c\setminus\ltl{0} \quad\text{ with } K_{\omega,c}\sts{\varphi}=0,
  \end{equation*}
and
  \begin{align*}
    J^0_{\omega,c} &= \inf\ltl{J_{\omega,c}\sts{\varphi}~:~K_{\omega,c}\sts{\varphi}=0,~~\varphi\in X_c\setminus\ltl{0}} \\
    &= \inf\ltl{H_{\omega,c}\sts{\varphi}~:~K_{\omega,c}\sts{\varphi}=0,~~\varphi\in X_c\setminus\ltl{0}}
    \\
    & \geqslant \inf\ltl{H_{\omega,c}\sts{\varphi}~:~K_{\omega,c}\sts{\varphi}\leqslant 0,~~\varphi\in X_c\setminus\ltl{0}}\\
    & =\tilde{J}^0_{\omega,c}.
  \end{align*}
Next, for any $\varphi\in X_c\setminus\ltl{0}$ with
$K_{\omega,c}\sts{\varphi}<0$. By Lemma \ref{Lem:crit:smallscal} and Lemma
\ref{Lem:crit:knear0}, there exists $\lambda_0\in\sts{0,1}$ such that
$K_{\omega,c}\sts{\lambda_0 \varphi}=0.$  The monotonicity \eqref{H:crit:prop} of
the functional $H_{\omega,c}$ implies that
  \begin{equation*}
    J_{\omega,c}\sts{\lambda_0 \varphi} = H_{\omega,c}\sts{\lambda_0 \varphi}<H_{\omega,c}\sts{\varphi}.
  \end{equation*}
  Hence, we have $J^0_{\omega,c}\leqslant \tilde{J}^0_{\omega,c}$, which implies $(1)$.

  Next, we show $(2)$. On one hand, let $\varphi$ be any minimizer for $\tilde{J}^0_{\omega,c},$ i.e.
    \begin{equation*}
        \varphi\in X_c\setminus\ltl{0} \text{ with } K_{\omega,c}\sts{\varphi}\leqslant 0\text{ and } H_{\omega,c}\sts{\varphi}=\tilde{J}^0_{\omega,c}.
    \end{equation*}
    In order to show that $\varphi$ is also a minimizer for $J^0_{\omega,c}$, we only need to show that
    $K_{\omega,c}\sts{\varphi}=0.$
 We argue by contradiction. Assume that $K_{\omega,c}\left( \varphi\right)<0,$
 by Lemma \ref{Lem:crit:smallscal} and Lemma \ref{Lem:crit:knear0}, there exists $\lambda_0\in\sts{0,1}$ which is dependent on $\varphi$ such that
    \begin{equation*}
      K_{\omega,c}\sts{\lambda_0 \varphi}=0
    \end{equation*}
    and
    \begin{equation*}
      K_{\omega,c}\sts{\lambda \varphi}<0,\quad \text{for any } \lambda\in\left(\lambda_0,1\right].
    \end{equation*}
Thus  by the monotonicity \eqref{H:crit:prop} of the functional $H$,  we
obtain that
\begin{equation}
  \label{eq:crit:minimizer-value-lem}
  \tilde{J}^0_{\omega,c}=H_{\omega,c}\left( \varphi \right)>H_{\omega,c}\left( \lambda_0 \varphi \right)=J_{\omega,c}\left( \lambda_0\varphi \right)\geqslant J^0_{\omega,c}=\tilde{J}^0_{\omega,c},
\end{equation}
which is a contradiction. Hence,
\begin{math}
  K_{\omega,c}\left( \varphi \right)=0
\end{math}
and
\begin{math}
   \varphi
\end{math}
is also a minimizer for $J^0_{\omega,c}$. On the other hand, let
\begin{math}
   \varphi
\end{math}
be any minimizer for $J^0_{\omega,c},$ i.e.
\begin{equation*}
    \varphi\in X_c\setminus\ltl{0} \text{ with } K_{\omega,c}\sts{\varphi}=0, \text{ and } J_{\omega,c}\sts{\varphi}=J^0_{\omega,c}.
\end{equation*}
Then we have
\begin{equation*}
  \tilde{J}^0_{\omega,c}\leqslant H_{\omega,c}\left( \varphi \right)=J_{\omega,c}\left( \varphi \right)=J^0_{\omega,c}=\tilde{J}^0_{\omega,c}.
\end{equation*}
Hence,
\begin{math}
  \varphi
\end{math}
is also a minimizer for $\tilde{J}^0_{\omega,c}.$ This completes the proof.
\end{proof}

Now, we can use the non-increasing rearrangement technique in
\cite{LiebL:book} once again to show the existence of minimizer to
\eqref{Crit:mini:eq:struct}.

\begin{lemma}
\label{Lem:crit:exist} There exists at least one minimizer for the
minimization problem \eqref{Crit:mini:eq:struct}.
\end{lemma}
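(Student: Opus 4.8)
The plan is to transcribe the argument used for the subcritical case in Lemma \ref{Lem:sub:exist}, working now in the weaker space $X_c$ and extracting coercivity from the positive functional $H_{\omega,c}$ rather than from an $L^2$ term. By Lemma \ref{lem:crit:equiminimization} it suffices to produce a minimizer for the relaxed problem \eqref{Crit:mini:ineq:struct}. So I would first take a minimizing sequence $\ltl{\varphi_n}\subset X_c\setminus\ltl{0}$ with $K_{\omega,c}\sts{\varphi_n}\leq 0$ and $H_{\omega,c}\sts{\varphi_n}\to J^0_{\omega,c}$, and write $\varphi_n=e^{\i\frac{c}{2}x}\phi_n$ with $\phi_n\in\sts{\dot{H}^1\cap L^4}\sts{\R,\C}$. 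Replacing $\phi_n$ by its Schwarz symmetrization $\phi_n^{\ast}$ and $\varphi_n$ by $e^{\i\frac{c}{2}x}\phi_n^{\ast}$, I would reduce to real, radial, non-increasing profiles: by the rearrangement inequalities of \cite{LiebL:book} the gradient term $\normdh{\phi_n}$ does not increase while $\normf{\phi_n}$ and $\normsix{\phi_n}$ are preserved, so neither $H_{\omega,c}$ nor $K_{\omega,c}$ increases (exactly as in the subcritical proof). Since $4\omega=c^2$ annihilates the $L^2$ part, one reads off from \eqref{Funct:H:crit} that $H_{\omega,c}\sts{\varphi_n}=\frac13\sts{\normdh{\phi_n}^2+\frac{c}{8}\normf{\phi_n}^4}$, and because $c>0$ the boundedness of $H_{\omega,c}\sts{\varphi_n}$ yields a uniform bound for $\phi_n$ in $\dot{H}^1\cap L^4$.

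The second step is compactness. With $\ltl{\phi_n}$ bounded, radial and non-increasing in $\dot{H}^1\cap L^4$, I would invoke Lemma \ref{lem:compact} with $p=4$ to extract a subsequence and a limit $\phi\in\dot{H}^1\cap L^4$ with $\phi_n\to\phi$ strongly in $L^q$ for every $4<q<\infty$ (in particular in $L^6$), weakly in $\dot{H}^1\cap L^4$, and almost everywhere. Setting $\varphi=e^{\i\frac{c}{2}x}\phi\in X_c$, I would pass to the limit: the gradient and quartic terms are controlled by weak lower semicontinuity, while the sextic term $\frac{3}{16}\int\abs{\phi_n}^6$ converges by the strong $L^6$ convergence. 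This gives $H_{\omega,c}\sts{\varphi}\leq\liminf_n H_{\omega,c}\sts{\varphi_n}=J^0_{\omega,c}$ and $K_{\omega,c}\sts{\varphi}\leq\liminf_n K_{\omega,c}\sts{\varphi_n}\leq0$, so that $\varphi$ is admissible for \eqref{Crit:mini:ineq:struct} as soon as it is nonzero.

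The decisive step, and the one I expect to be the main obstacle, is ruling out $\varphi=0$; here the missing $L^2$ coercivity makes vanishing a genuine threat, and the whole argument hinges on the near-origin positivity supplied by Lemma \ref{Lem:crit:knear0}. Arguing by contradiction, if $\varphi=0$ then $\phi_n\to0$ in $L^6$, hence $K^N_{\omega,c}\sts{\varphi_n}=\frac{3}{16}\int\abs{\phi_n}^6\to0$; combining this with $K_{\omega,c}\sts{\varphi_n}\leq0$ and $K^Q_{\omega,c}\geq0$ yields
\begin{align*}
0\leq\liminf_{n\to\infty}K^Q_{\omega,c}\sts{\varphi_n}
=\liminf_{n\to\infty}\sts{K_{\omega,c}\sts{\varphi_n}+K^N_{\omega,c}\sts{\varphi_n}}
\leq0,
\end{align*}
so $K^Q_{\omega,c}\sts{\varphi_n}\to0$ along a subsequence. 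Since $\ltl{\varphi_n}$ remains bounded in $X_c$, Lemma \ref{Lem:crit:knear0} then forces $K_{\omega,c}\sts{\varphi_n}>0$ for large $n$, contradicting $K_{\omega,c}\sts{\varphi_n}\leq0$. Therefore $\varphi\neq0$, whence $\varphi$ is a minimizer of \eqref{Crit:mini:ineq:struct} and, by Lemma \ref{lem:crit:equiminimization}, of \eqref{Crit:mini:eq:struct}. The scheme is a faithful copy of the subcritical proof; the only genuinely new ingredients are the use of $L^4$-coercivity of $H_{\omega,c}$ in place of $L^2$-coercivity and the application of the compactness lemma at the endpoint exponent $p=4$, which is what makes the strong $L^6$ convergence—and hence the nonvanishing dichotomy—available.
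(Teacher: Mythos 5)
Your proposal is correct and follows essentially the same route as the paper's own proof: Schwarz symmetrization of the profiles $\phi_n$, coercivity of $H_{\omega,c}$ on $\dot{H}^1\cap L^4$ (using $4\omega=c^2$, $c>0$ to kill the $L^2$ term), Lemma \ref{lem:compact} with $p=4$ to get strong $L^6$ convergence, weak lower semicontinuity for the remaining terms, and the exclusion of vanishing via Lemma \ref{Lem:crit:knear0} through exactly the same contradiction with $K_{\omega,c}\sts{\varphi_n}\leqslant 0$. There is nothing to add.
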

\begin{proof}
   Let $\ltl{\varphi_n}\subset X_c\setminus\ltl{0}$ be a minimizing sequence of the constrained problem \eqref{Crit:mini:ineq:struct}, i.e.
  \begin{align*}
    K_{\omega,c}\sts{\varphi_n}\leqslant 0,\quad H_{\omega,c}\sts{\varphi_n}\geqslant J^0_{\omega,c} \text{~~and~~} \lim_{n\to\infty}H\sts{\varphi_n}= J^0_{\omega,c}.
  \end{align*}
By definition of $X_c,$ there exists a sequence $\ltl{\phi_n}\in
\sts{\dot{H}^1\cap L^4}\sts{\R,\C}\setminus\ltl{0}$ such that
    \begin{equation*}
~~\varphi_n=e^{\i \frac{c}{2}x }\phi_n~~\text{and~~}
\normx{\varphi_n}=\normdh{\phi_n}+\normf{\phi_n}.
    \end{equation*}
    Without loss of generality, we may also assume that $\phi_n$ are the real valued, radially symmetric and non-increasing functions about the origin of $\R$.
    Indeed, for any $\psi\in X_c\setminus\ltl{0}$ with $\psi=e^{\i x}\phi$ and $\normx{\psi}=\normdh{\phi}+\normf{\phi},$
    let $\phi^{\ast}$ be the Schwarz symmetrization of $\phi,$ and $\psi^{\ast}= e^{\i \frac{c}{2}x }\phi^{\ast},$
    by Schwarz rearrangement inequality in \cite{LiebL:book}, it is easy to check that
    \begin{align*}
        H_{\omega,c}\sts{\psi}
        & =\frac{1}{3}\int\sts{\abs{\partial_x\varphi}^2 +\omega\abs{\varphi}^2-c\Im\sts{\overline{\varphi}\partial_x\varphi}+\frac{c}{8}\abs{\psi}^4 }dx\\
        &= \frac{1}{3}\int \sts{\abs{\sts{e^{-\i \frac{c}{2}x } \psi}_x}^2 + \frac{c}{8}\abs{e^{-\i \frac{c}{2}x } \psi}^4 }dx\\
        &=\frac{1}{3}\int\sts{\abs{\partial_x\phi}^2 + \frac{c}{8}\abs{\phi}^4
        }dx
        \\
        &\geqslant \frac{1}{3}\int\sts{\abs{\partial_x\phi^{\ast}}^2 + \frac{c}{8}\abs{\phi^{\ast}}^4
        }dx\\
        & =  H_{\omega,c}\sts{\psi^{\ast}},
    \end{align*}
    a similar argument shows that
    \begin{equation*}
        K_{\omega,c}\sts{\psi}\geqslant K_{\omega,c}\sts{\psi^{\ast}}.
    \end{equation*}
    Since $\displaystyle \lim_{n\to\infty}H_{\omega,c}\sts{\varphi_n}= J^0_{\omega,c},$ we have $\varphi_n$ is bounded in $X_c,$
    which means $\phi_n$ is bounded in $\dot{H}^1\sts{\R,\R}\cap L^4\sts{\R,\R}.$ It follows from Lemma \ref{lem:compact} that
     there exists $\phi\in \dot{H}^1\sts{\R,\R}\cap L^4\sts{\R,\R}$ such that
     \begin{align*}
        \lim_{n\to\infty}\phi_n &= \phi,\quad\text{weakly in } \dot{H}^1\sts{\R,\R}\cap L^4\sts{\R,\R},
        \\
        \lim_{n\to\infty}\phi_n &= \phi,\quad\text{strongly in } L^6\sts{\R,\R}.
    \end{align*}
     Hence, from the definition of $X_c,$
          \begin{align*}
        \lim_{n\to\infty}\varphi_n &= \varphi=e^{\i\frac{c}{2}x}\phi,\quad\text{weakly in } X_c,
        \\
        \lim_{n\to\infty}\varphi_n &= \varphi,\quad\quad \text{strongly in } L^6\sts{\R,\C}.
    \end{align*}
     It follows from the weak lower continuity of the norm that
     \begin{align*}
       H_{\omega,c}\sts{\varphi} &\leqslant \lim_{n\to\infty}H_{\omega,c}\sts{\varphi_n}=J^0_{\omega,c}, \\
       K_{\omega,c}\sts{\varphi} &\leqslant \liminf_{n\to\infty}K_{\omega,c}\sts{\varphi_n}\leqslant 0.
     \end{align*}

     Next, we shall prove $\varphi\neq 0.$ Suppose that $\varphi=0,$
     then we have
     \begin{align*}
      0\leq \liminf_{n\to\infty}K^Q_{\omega,c}\sts{\varphi_n}
       &=
        \liminf_{n\to\infty}\sts{ K_{\omega,c}\sts{\varphi_n} +K^{N}_{\omega,c}\sts{\varphi_n} }
        \\
        &\leqslant
            \liminf_{n\to\infty}K_{\omega,c}\sts{\varphi_n} + \lim_{n\to\infty}K^{N}_{\omega,c}\sts{\varphi_n}\\
        & \leqslant 0.
     \end{align*}
By Lemma \ref{Lem:crit:knear0}, there exists a subsequence
$\varphi_{n_k}$ such that
     \begin{equation*}
       K_{\omega,c}\sts{\varphi_{n_k}}>0, \quad \text{for $k$ large enough,}
     \end{equation*}
It is a contradiction with the choice of $\varphi_n$. Thus
$\varphi\neq 0,$ Hence $\varphi$ is a minimizer of
\eqref{Crit:mini:ineq:struct}. By Lemma \ref{lem:crit:equiminimization}, $\varphi$
is also a minimizer of \eqref{Crit:mini:eq:struct}.
\end{proof}
Since $J_{\omega,c}$ and $K_{\omega,c}$ are $C^1$ functionals on $X_c,$ by the above lemma,
it is easy to see that if $\varphi\in X_c\setminus\ltl{0}$ is a
minimizer for \eqref{Crit:mini:eq:struct}, then there exists $\eta\in\R$ such
that
\begin{equation*}
  \dual{J_{\omega,c}'\sts{\varphi}}{\psi} =\eta \dual{K_{\omega,c}'\sts{\varphi}}{\psi},\quad\text{for any~~}\psi\in
  X_c.
\end{equation*}
If we take $\psi=\varphi$ in the above equation, then it follows
from \eqref{eq:varphi:II:invariant} that
\begin{align*}
  0&=K_{\omega,c}\sts{\varphi} \\
  &
  =\int\sts{\abs{\partial_x\varphi}^2-\frac{3}{16}\abs{\varphi}^6+\omega\abs{\varphi}^2
  -c\Im\sts{\overline{\varphi}\partial_x\varphi}+\frac{c}{2}\abs{\varphi}^4
  }dx
  \\
  &=\eta \int\sts{2\abs{\partial_x\varphi}^2 -\frac{9}{8}\abs{\varphi}^6+2\omega\abs{\varphi}^2- 2c\Im\sts{\overline{\varphi}\partial_x\varphi}
  +2c\abs{\varphi}^4}dx
  \\
  &=6\eta K_{\omega,c}\sts{\varphi}-4\eta\int\sts{\abs{\partial_x\varphi}^2 +\frac{c^2}{4}\abs{\varphi}^2- c
  \Im\sts{\overline{\varphi}\partial_x\varphi}}dx
  -c\eta\int\abs{\varphi}^4dx
  \\
  &= - 4\eta\|\phi\|^2_{\dot H^1} -c\eta \normf{\phi}^4,
\end{align*}
where $\varphi\sts{x}=e^{\i \frac{c}{2}x}\phi\sts{x}.$ Since $\varphi\in
X_c\setminus\ltl{0},$ we obtain $\eta=0$ and
\begin{equation*}
  J_{\omega,c}'\sts{\varphi}=0,\quad \text{in~~} X_c^*,
\end{equation*}
i.e. $\varphi$ satisfies \eqref{V:EI-b} in sense of $X_c$. Since
$\varphi(x)=e^{\i \frac{c}{2}x }\phi(x)$, we have
\begin{equation}\label{EII:crit:H1L4}
  -\partial^2_x\phi+\frac{c}{2}\abs{\phi}^2\phi-\frac{3}{16}\abs{\phi}^4\phi=0 \quad\text{in~~} \dot{H}^1\sts{\R,\C}\cap L^4\sts{\R,\C}.
\end{equation}
On the other hand, by the sharp Gagliardo-Nirenberg inequality in
\cite{Agueh}, \eqref{EII:crit:H1L4} has a radial symmetric solution
$\phi_{\omega, c}\sts{x}=\frac{2\sqrt{c}}{\sqrt{c^2x^2+1}}$. In
addtion, by the similar uniqueness result as Theorem 8.1.6 in
\cite{Caz:NLS:book}(ODE argument), it is unique, up to the phase
rotation and spatial translation symmetries. Last it is easy to
verify that
\begin{equation}\label{V:l2reg}
  \phi_{\omega,c}\in H^1\sts{\R,\C}.
\end{equation}
It follows that
\begin{align*}
  \left\|\varphi_{\omega,c}\right\|_{H^1}^2=&\left\|e^{-\i \frac{c}{2}x }\phi_{\omega,c}\right\|_{H^1}^2  \lesssim \left\|\phi_{\omega,c}\right\|_{H^1}^2 + \left\|\phi_{\omega,c}\right\|_{L^2}^2<\infty,
\end{align*}
which means $\varphi_{\omega,c}\in H^1\sts{\R, \C}\hookrightarrow
X_c.$ Thus, we have
\begin{align}
J^0_{\omega,c}=&
J_{\omega,c}\sts{\varphi_{\omega,c}} \notag \\
=&\inf\ltl{J_{\omega,c}\sts{\varphi}~:~K_{\omega,c}\sts{\varphi}=0,~~\varphi\in
H^1\sts{\R, \C}\setminus\ltl{0}}. \label{crit:mini:energy space}
\end{align}

Summing up,  we have
\begin{proposition}\label{exist:critical case}
For the critical case $4\omega=c^2, c>0$, up to the phase rotation
and spatial translation symmetries, \eqref{Eq:varphi:I} has a unique
solution $\varphi_{\omega,c}$ in $H^1\sts{\R, \C}$, where
    \begin{equation*}
      \varphi_{\omega,c}\sts{x}=e^{\i \frac{c}{2}x}\frac{2\sqrt{c}}{\sqrt{c^2x^2+1}}.
    \end{equation*}
\end{proposition}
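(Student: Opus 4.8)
The plan is to assemble the preceding subsection into a single statement about \eqref{Eq:varphi:I} in the energy space. The first move is the structure analysis of Lemma \ref{lem:structure}: writing $\varphi = e^{\i\frac{c}{2}x}\phi$, a nontrivial $\varphi \in H^1(\R,\C)$ solves the quasilinear equation \eqref{Eq:varphi:I} if and only if the profile $\phi$ solves the semilinear equation \eqref{Eq:phi:II}, which in the critical regime $4\omega = c^2$ collapses (since $\omega - \tfrac{c^2}{4} = 0$) to $-\partial_x^2\phi + \tfrac{c}{2}\abs{\phi}^2\phi - \tfrac{3}{16}\abs{\phi}^4\phi = 0$, i.e. \eqref{EII:crit:H1L4}. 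This eliminates the genuinely quasilinear derivative nonlinearity and reduces everything to a semilinear problem; by Remark \ref{rem:rvs} the profile may moreover be taken real, even and positive.

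The reason I cannot minimize directly in $H^1$ is that for $4\omega = c^2$ the quadratic part of the constraint loses coercivity, so I work instead in the weak space $X_c = \{e^{\i\frac{c}{2}x}\phi : \phi \in (\dot H^1 \cap L^4)(\R,\C)\}$. Invoking Lemma \ref{Lem:crit:exist} produces a minimizer $\varphi \in X_c\setminus\{0\}$ of the constrained problem \eqref{Crit:mini:eq:struct}, with $K_{\omega,c}(\varphi) = 0$ by Lemma \ref{lem:crit:equiminimization}. The minimizer satisfies a Lagrange-multiplier identity $\langle J'_{\omega,c}(\varphi),\psi\rangle = \eta\langle K'_{\omega,c}(\varphi),\psi\rangle$ for some $\eta\in\R$ and all $\psi\in X_c$; testing against $\psi = \varphi$, using $K_{\omega,c}(\varphi) = 0$ together with \eqref{eq:varphi:II:invariant}, reduces the right-hand side to $-4\eta\normdh{\phi}^2 - c\eta\normf{\phi}^4$. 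Since $\varphi \neq 0$ and $c > 0$ this forces $\eta = 0$, so $\varphi$ is an honest critical point of $J_{\omega,c}$ and $\phi$ solves \eqref{EII:crit:H1L4}.

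It then remains to identify and pin down the solution. By the sharp Gagliardo-Nirenberg inequality \cite{Agueh}, the radial profile $\phi_{\omega,c}(x) = 2\sqrt{c}/\sqrt{c^2x^2+1}$ solves \eqref{EII:crit:H1L4}, and the one-dimensional ODE uniqueness argument (as in Theorem 8.1.6 of \cite{Caz:NLS:book}) shows it is the only solution up to the phase rotation and spatial translation symmetries. Undoing the gauge gives $\varphi_{\omega,c}(x) = e^{\i\frac{c}{2}x}\,2\sqrt{c}/\sqrt{c^2x^2+1}$ as the claimed unique traveling wave.

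The last and most delicate point is to close the gap between the weak space and the energy space. A direct decay computation shows $\phi_{\omega,c} \in H^1(\R,\C)$, see \eqref{V:l2reg}; since multiplication by $e^{\i\frac{c}{2}x}$ maps $H^1$ boundedly into itself, $\varphi_{\omega,c} \in H^1(\R,\C) \hookrightarrow X_c$. Hence the $X_c$-minimizer in fact lives in $H^1$, and \eqref{crit:mini:energy space} holds: the same $\varphi_{\omega,c}$ realizes $J^0_{\omega,c}$ over the $H^1$ constraint set. I expect the main obstacle to be exactly this interplay forced by the critical scaling --- the loss of coercivity pushes the variational argument into the larger space $X_c$, and one must recover $H^1$ membership a posteriori. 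This step hinges on the (merely polynomial, not exponential) decay of the critical profile being fast enough for $L^2$ integrability; were the decay any slower, the identification of the $X_c$-minimizer with a genuine energy-space solitary wave could fail.
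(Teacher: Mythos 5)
Your proposal is correct and follows essentially the same route as the paper's own argument: the structure reduction of Lemma \ref{lem:structure} to the semilinear equation \eqref{EII:crit:H1L4}, the constrained minimization in $X_c$ via Lemmas \ref{lem:crit:equiminimization} and \ref{Lem:crit:exist}, the Lagrange-multiplier computation forcing $\eta=0$, identification of the explicit profile with ODE uniqueness as in Theorem 8.1.6 of \cite{Caz:NLS:book}, and the a posteriori $H^1$ regularity \eqref{V:l2reg} yielding \eqref{crit:mini:energy space}. Your closing remark correctly isolates the point the paper also emphasizes, namely that the loss of coercivity at $4\omega=c^2$ forces the detour through $X_c$, with energy-space membership recovered only because the polynomially decaying profile is still square integrable.
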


\begin{proof}[Proof of Theorem \ref{Thm:Existence of TW}]
the existence and uniqueness of the solitary waves in the energy space
are obtained by Proposition \ref{exist:subcritical case} and
Proposition \ref{exist:critical case}, while the nonexistence of the
solitary waves in the energy space is obtain by Proposition
\ref{nonexist:crit case}.
\end{proof}
%
%
%
%

\section{Global well-posedness result for solutions with initial data in $\KKK^+_{\omega,c}$}\label{sect:GWP}
In this section, we show Theorem \ref{Thm:GWP} and Corollary \ref{cor:gwp}. In order to do this,
we first show the following uniformly boundedness of $K_{\omega, c}$
functional in the energy space.
\begin{lemma}\label{lem:Kunibound}
  Assume $(\omega,
c)$ with $4\omega>c^2$ or $4\omega=c^2, c>0$ and let $\varphi\in
\KKK^+_{\omega,c}$, then we have
  \begin{align*}
     K_{\omega,c}\sts{\varphi}\geqslant\min\ltl{~4\sts{ J^0_{\omega,c}-J_{\omega,c}\sts{\varphi}},~\frac14\normdh{e^{-i\frac{c}{2}x} \varphi}^2+\frac14\sts{\omega-\frac{c^2}{4}}\normto{\varphi}^2~}.
  \end{align*}
\end{lemma}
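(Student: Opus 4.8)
The plan is to test the functionals along the dilation $\lambda\mapsto\lambda\varphi$ and to exploit that $K_{\omega,c}$ is the infinitesimal generator of this scaling for $J_{\omega,c}$. Concretely, set $j(\lambda):=J_{\omega,c}\sts{\lambda\varphi}$ and $k(\lambda):=K_{\omega,c}\sts{\lambda\varphi}$. Writing $\varphi=e^{\i\frac{c}{2}x}\phi$ and abbreviating $A:=\normdh{\phi}^2+\sts{\omega-\frac{c^2}{4}}\normto{\phi}^2$ (which is exactly four times the second entry of the minimum), $D:=\normf{\varphi}^4$, $B:=\normsix{\varphi}^6$, the definitions give the polynomial identities
\[
 j(\lambda)=\tfrac12 A\lambda^2+\tfrac{c}{8}D\lambda^4-\tfrac1{32}B\lambda^6,\qquad k(\lambda)=A\lambda^2+\tfrac{c}{2}D\lambda^4-\tfrac{3}{16}B\lambda^6 .
\]
Everything rests on the two relations read off from these: $k(\lambda)=\lambda\,j'(\lambda)$, and $j(\lambda)-\tfrac14 k(\lambda)=\tfrac14 A\lambda^2+\tfrac1{64}B\lambda^6=:\mathcal{H}(\lambda)$, the latter being manifestly positive and strictly increasing for $\lambda>0$.

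After discarding $\varphi=0$ (where both sides vanish), I assume $\varphi\neq 0$, so $A,B>0$. Then $g(\lambda):=A+\tfrac{c}{2}D\lambda^2-\tfrac3{16}B\lambda^4$ is a downward quadratic in $\lambda^2$ with $g(0)=A>0$, hence has a unique positive root $\lambda_*$, and $j'=\lambda g$ identifies $\lambda_*$ as the unique maximizer of $j$ on $(0,\infty)$. Since $\varphi\in\KKK^+_{\omega,c}$ forces $k(1)=K_{\omega,c}\sts{\varphi}\ge 0$, i.e. $j'(1)\ge 0$, we get $\lambda_*\ge 1$; and because $K_{\omega,c}\sts{\lambda_*\varphi}=0$ with $\lambda_*\varphi\neq 0$, the definition of $J^0_{\omega,c}$ yields $J^0_{\omega,c}\le j(\lambda_*)$.

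I then split into two cases. If $k(1)\ge\tfrac14 A$, the desired bound holds through its second entry. Otherwise $k(1)<\tfrac14 A$, and I must produce the first entry, i.e. show $J^0_{\omega,c}\le j(1)+\tfrac14 k(1)$; since $J^0_{\omega,c}\le j(\lambda_*)$, it suffices to prove $j(\lambda_*)-j(1)\le\tfrac14 k(1)$, which via $k(\lambda_*)=0$ and $j=\mathcal{H}+\tfrac14 k$ is equivalent to $\mathcal{H}(\lambda_*)-\mathcal{H}(1)\le\tfrac12 k(1)$. The crucial preliminary is a bound on $\lambda_*$: setting $\mu:=\lambda_*^2$ and eliminating $B$ through $k(\lambda_*)=0$ (which gives $\tfrac3{16}B=\mu^{-2}A+\tfrac{c}{2}D\,\mu^{-1}$), one finds $k(1)=A(1-\mu^{-2})+\tfrac{c}{2}D(1-\mu^{-1})$; combining the positivity $\tfrac{c}{2}D\,\mu>-A$ (which is just $B>0$) with the case hypothesis $k(1)<\tfrac14 A$ forces $\mu<\tfrac43$, so $\lambda_*$ is close to $1$.

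The main obstacle is the final quantitative estimate, and it is purely algebraic. Eliminating $B$ and clearing denominators, the inequality $\mathcal{H}(\lambda_*)-\mathcal{H}(1)\le\tfrac12 k(1)$ reduces to
\[
 (\mu-1)\left[\,A\sts{4\mu^2-5\mu-5}+\tfrac{c}{2}D\,\mu\sts{\mu^2+\mu-5}\,\right]\le 0 .
\]
For $1\le\mu<\tfrac43$ one has $\mu-1\ge 0$ and $\mu^2+\mu-5<0$, so the bracket is decreasing in $\tfrac{c}{2}D$ and is maximized as $\tfrac{c}{2}D\,\mu\to -A$, where it tends to $A\cdot 3\mu(\mu-2)<0$; hence the bracket stays negative throughout the admissible range and the inequality holds. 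I expect the only delicate point to be the bookkeeping here — tracking the sign of the cross term $c\,P$ (encoded in $\tfrac{c}{2}D$) uniformly for $c>0$ and $c<0$, and checking that the window $\mu<\tfrac43$ is precisely what controls it; the scaling identities and the reduction $J^0_{\omega,c}\le j(\lambda_*)$ are routine. The same argument applies verbatim in the critical case $4\omega=c^2,\,c>0$, where $\omega-\tfrac{c^2}{4}=0$ merely deletes the $L^2$ contribution to $A$.
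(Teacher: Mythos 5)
Your proposal is correct: I verified the scaling identities $k(\lambda)=\lambda j'(\lambda)$ and $j-\tfrac14 k=\mathcal{H}$, the elimination of $B$ through $k(\lambda_*)=0$, the derivation of the window $\mu<\tfrac43$ in both sign cases of $c$ (for $c\ge 0$ one even gets $\mu^2<\tfrac43$; for $c<0$ the constraint $\tfrac{c}{2}D\mu>-A$ yields $k(1)>A(1-\mu^{-1})$ and hence $\mu<\tfrac43$), and the factorization $(\mu-1)\bigl[A(4\mu^2-5\mu-5)+\tfrac{c}{2}D\,\mu(\mu^2+\mu-5)\bigr]$ together with the monotonicity-in-$\tfrac{c}{2}D$ argument and the boundary value $3A\mu(\mu-2)<0$. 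However, your implementation of the core step is genuinely different from the paper's. The paper sets $\mathfrak{j}(\lambda):=J_{\omega,c}(e^{\lambda}\varphi)$, so that $\mathfrak{j}'(\lambda)=K_{\omega,c}(e^{\lambda}\varphi)$, and instead of locating the maximizer it derives the differential inequality $\mathfrak{j}''(\lambda)\le 4\mathfrak{j}'(\lambda)-2e^{2\lambda}A$ in \eqref{jineq} (discarding the negative sextic term), splits at the same threshold $K_{\omega,c}(\varphi)\ge \tfrac14 A$ versus $<\tfrac14 A$, and in the second case integrates $\mathfrak{j}''\le-4\mathfrak{j}'$ once over $[0,\lambda_0]$, where $\lambda_0$ is the first zero of $\mathfrak{j}'$, to obtain $-K_{\omega,c}(\varphi)\le -4\bigl(J^0_{\omega,c}-J_{\omega,c}(\varphi)\bigr)$ using $J_{\omega,c}(e^{\lambda_0}\varphi)\ge J^0_{\omega,c}$ exactly as you do at $\lambda_*$. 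So the scaffolding is shared — the scaling family, the positive functional $\mathcal{H}=J_{\omega,c}-\tfrac14 K_{\omega,c}$ (the paper's $H_{\omega,c}$), the case split, and invoking $J^0_{\omega,c}$ at the zero-crossing of $K_{\omega,c}$ — but the paper's ODE-comparison step never needs the location of the turning point nor any window on it, which makes it shorter and more robust (it transfers to nonlinearities lacking your exact polynomial structure, as in the gDNLS setting of \cite{FukHI:gDNLS}), and it absorbs the sign discussion of $cP$ automatically. Your explicit-algebra route buys something too: the existence and uniqueness of $\lambda_*$ come for free from the quadratic $g$ in $\mu$, whereas the paper's assertion that $\mathfrak{j}'$ reaches zero at some finite $\lambda_0$ and that \eqref{jcase2ineq} persists on all of $[0,\lambda_0]$ is stated somewhat briskly; your computation also makes visible precisely where the constant $4$ and the window $\mu<\tfrac43$ are used, at the cost of the bookkeeping you flagged.
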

\begin{proof}
  For the simply of notation, for any $\varphi\in \KKK^+_{\omega,c},$ we denote
  \begin{align*}
    \mathfrak{j}\sts{\lambda}:=J_{\omega,c}\sts{e^{\lambda}\varphi },
  \end{align*}
then, it is easy to see that
\begin{align*}
  \lim_{\lambda\to -\infty}\mathfrak{j}\sts{\lambda} = 0,\quad
  \mathfrak{j}'\sts{\lambda}= K_{\omega,c}\sts{e^{\lambda}\varphi },
\end{align*}
and
\begin{align}
  & \mathfrak{j}''\sts{\lambda} \notag\\
  =& \int\sts{2e^{2\lambda}\abs{\partial_x\varphi}^2 -2c e^{2\lambda}\Im\sts{\overline{\varphi}\partial_x\varphi}
  +2\omega e^{2\lambda}\abs{\varphi}^2-\frac{9}{8}e^{6\lambda}\abs{\varphi}^6  +2 c e^{4\lambda}\abs{\varphi}^4
  }dx \notag\\
  =& 4\int\sts{e^{2\lambda}\abs{\partial_x\varphi}^2
  -c e^{2\lambda}c\Im\sts{\overline{\varphi}\partial_x\varphi}
  +\omega e^{2\lambda}\abs{\varphi}^2 -\frac{3}{16}e^{6\lambda}\abs{\varphi}^6
  +\frac{c}{2}e^{4\lambda}\abs{\varphi}^4}dx \notag
  \\
  &-\frac{3}{8}\int\sts{e^{6\lambda}\abs{\varphi}^6}-2 e^{2\lambda}\int\sts{\abs{\partial_x\varphi}^2
  -c\Im\sts{\overline{\varphi}\partial_x\varphi}
  +\omega \abs{\varphi}^2 }dx \notag
  \\
  \leqslant & 4\mathfrak{j}'\sts{\lambda}-2e^{2\lambda}\sts{\normdh{e^{-i\frac{c}{2}x} \varphi}^2+\sts{\omega-\frac{c^2}{4}}\normto{\varphi}^2}.\label{jineq}
\end{align}

We will discuss in two cases:

\noindent Case (a): $8 K_{\omega,c}\sts{\varphi }\geqslant
2\sts{\normdh{e^{-i\frac{c}{2}x}
\varphi}^2+\sts{\omega-\frac{c^2}{4}}\normto{\varphi}^2}.$ Then, we
have
      \begin{align*}
        K_{\omega,c}\sts{\varphi }\geqslant \frac14\normdh{e^{-i\frac{c}{2}x} \varphi}^2+\frac14 \sts{\omega-\frac{c^2}{4}}\normto{\varphi}^2.
      \end{align*}

\noindent Case (b): $8 K_{\omega,c}\sts{\varphi
}<2\sts{\normdh{e^{-i\frac{c}{2}x}
\varphi}^2+\sts{\omega-\frac{c^2}{4}}\normto{\varphi}^2}.$ By
\eqref{jineq}, we have for $\lambda=0,$
      \begin{align}\label{jcase2ineq}
       \nonumber 0\leqslant& 8 \mathfrak{j}'\sts{\lambda}< 2e^{2\lambda}\sts{\normdh{e^{-i\frac{c}{2}x} \varphi}^2+\sts{\omega-\frac{c^2}{4}}\normto{\varphi}^2},
        \\
       \nonumber \mathfrak{j}''\sts{\lambda}\leqslant & 4\mathfrak{j}'\sts{\lambda}-2e^{2\lambda}\sts{\normdh{e^{-i\frac{c}{2}x} \varphi}^2+\sts{\omega-\frac{c^2}{4}}\normto{\varphi}^2}
        \\
        \leqslant & -4\mathfrak{j}'\sts{\lambda}.
      \end{align}
It follows from the continuity of $\mathfrak{j}'$ and
$\mathfrak{j}''$ with respect to $\lambda$ that, $\mathfrak{j}'$ is
decreasing as $\lambda$ increases until
$\mathfrak{j}'\sts{\lambda_0}=0$ for some finite $\lambda_0>0.$
Moreover, \eqref{jcase2ineq} holds on $[0,\lambda_0].$ By $
K_{\omega,c}\sts{e^{\lambda_0}\varphi } =0,$ we have
      \begin{align*}
        J_{\omega,c}\sts{e^{\lambda_0}\varphi }\geqslant J_{\omega,c}^0.
      \end{align*}
      Now, integrating the second inequality in \eqref{jcase2ineq}, we obtain
      \begin{align*}
        -K_{\omega,c}\sts{ \varphi }= \mathfrak{j}'\sts{\lambda_0}-\mathfrak{j}'\sts{0} \leqslant -4\sts{ \mathfrak{j}\sts{\lambda_0}-\mathfrak{j}\sts{0} }\leqslant -4 \sts{ J_{\omega,c}^0-J_{\omega,c}\sts{\varphi }
        }.
      \end{align*}
This ends the proof.
\end{proof}

\begin{proof}[Proof of Theorem \ref{Thm:GWP}]
We first show $(1)$. It suffices to deal with the subcritical case
$(4\omega>c^2)$, since the critical case $(4\omega=c^2, c>0)$ can be
handled in the same way.

First of all, we define the function $j ~:~[0,\infty)\mapsto\R,$
\begin{align*}
  j\sts{\lambda}:= & J_{\omega,c}\sts{\lambda\varphi_{\omega,c}},
\end{align*}
where $\varphi_{\omega,c}$ is the minimizer obtained by Lemma
\ref{Lem:sub:exist}.
On one hand, it is easy to see that
  \begin{align*}
    \lim_{\lambda\to 0+} j\sts{\lambda}=&\; 0.
  \end{align*}
Moreover, it follows from Lemma \ref{lem:sub:equiminimization} that
\begin{align*}
j\sts{1}=J^0_{\omega,c}>0.
\end{align*}
Thus,
\begin{align*}
  j\sts{\lambda}<j\sts{1}=J^0_{\omega,c},\quad \text{for~~} \lambda \text{~~close to zero,}
\end{align*}
which means
\begin{align}\label{GWP:J:small}
  J_{\omega,c}\sts{\lambda\varphi_{\omega,c}}<J^0_{\omega,c},\quad \text{for~~} \lambda \text{~~close to zero.}
\end{align}
On the other hand, it follows from Lemma \ref{Lem:sub:smallscal} and
Lemma \ref{Lem:knear0} that
\begin{equation}\label{GWP:K:ge0}
  K_{\omega,c}\sts{\lambda\varphi_{\omega,c}}>0\quad \text{for~~} \lambda \text{~~close to zero.}
\end{equation}
\eqref{GWP:J:small} and \eqref{GWP:K:ge0} imply that
\begin{equation*}
  \lambda\varphi_{\omega,c}\in \KKK^+_{\omega,c}\quad \text{for~~} \lambda \text{~~close to zero,}
\end{equation*}
i.e. $\KKK^+_{\omega,c}\neq\emptyset.$ In the same way, by taking
$\lambda$ large enough, one can show that
$\KKK^-_{\omega,c}\neq\emptyset.$ By the variational argument of the
solitary waves in Section \ref{sect:exit} and the standard argument, we
know the invariance of the sets $\KKK^+_{\omega,c}$ and
$\KKK^-_{\omega,c}$ under the flow \eqref{DNLS}.

Next, we show $(2).$ We define the set
\begin{align*}
  \Omega_{\omega,c}^{+}=\ltl{ t\in I~:~J_{\omega,c}\sts{u\sts{t}}<J_{\omega,c}^0,~~K_{\omega,c}\sts{u\sts{t}}\geqslant 0}.
\end{align*}
First, by the assumption $u_0\in \KKK^+_{\omega,c},$ we have $0\in
\Omega_{\omega,c}^{+}.$ Next, by the mass, energy, momentum
conservation laws and the continuity of $K$ in $H^1(\R)$, we know that $
\Omega_{\omega,c}^{+}(\ni 0)$ is a closed subset of $I$. Last by the
uniform boundedness of $K_{\omega,c}$ in Lemma \ref{lem:Kunibound},
we know that $\Omega_{\omega,c}^{+}$ is open in $I$. Thus we have
$\Omega_{\omega,c}^{+} = I$.

Now for any $t\in I,$ we have
\begin{align*}
   \int\sts{\abs{\partial_x u\sts{t}}^2-\frac{3}{16}\abs{u\sts{t}}^6+\omega\abs{u\sts{t}}^2  - c\Im\sts{\overline{u\sts{t}}\partial_x u\sts{t}}+\frac{c}{2}\abs{u\sts{t}}^4
  }dx \geqslant 0,
\end{align*}
which implies that
\begin{align}
  &\int\sts{\abs{\partial_x u\sts{t}}^2 +\omega\abs{u\sts{t}}^2 - c\Im\sts{\overline{u\sts{t}}\partial_x u\sts{t}}}dx \notag\\
   \geqslant&  \int\sts{ \frac{3}{16}\abs{u\sts{t}}^6
   -\frac{c}{2}\abs{u\sts{t}}^4}dx \notag
   \\
=& 4\int\sts{ \frac{1}{32}\abs{u\sts{t}}^6
-\frac{c}{8}\abs{u\sts{t}}^4} + \frac{1}{16}\int\abs{u\sts{t}}^6dx
\notag
   \\
\geqslant& 4\int\sts{ \frac{1}{32}\abs{u\sts{t}}^6
-\frac{c}{8}\abs{u\sts{t}}^4}dx. \label{kbdd}
\end{align}
Note that for $t\in I$, we have
\begin{align*}
   J_{\omega,c}^0
   &>  J_{\omega,c}\sts{u\sts{t}}
   \\
   &=\int \left(\frac{1}{2}\abs{\partial_x u\sts{t}}^2  -\frac{1}{32}\abs{u\sts{t}}^6 +
  \frac{\omega}{2}\abs{u\sts{t}}^2 -
  \frac{c}{2}\Im\sts{\overline{u\sts{t}}\partial_x u\sts{t}}
  +\frac{c}{8}\abs{u\sts{t}}^4\right)dx \\
  & =  \frac{1}{2}\int\sts{\abs{\partial_x u\sts{t}}^2 +\omega\abs{u\sts{t}}^2 - c\Im\sts{\overline{u\sts{t}}\partial_x u\sts{t}}}  - \int \sts{ \frac{1}{32}\abs{u\sts{t}}^6  -\frac{c}{8}\abs{u\sts{t}}^4
  }dx.
  \end{align*}
  By \eqref{kbdd} and the Cauchy inequality, we obtain
  \begin{align*}
 J_{\omega,c}^0
  &\geqslant \frac{1}{4}\int\sts{\abs{\partial_x u\sts{t}}^2 +\omega\abs{u\sts{t}}^2 - c\Im\sts{\overline{u\sts{t}}\partial_x
  u\sts{t}}}dx
  \\
  &\geqslant \frac{1}{4}\sts{ \frac{1}{2}\int\abs{\partial_x u\sts{t}}^2dx +\sts{\omega-2c^2}\int\abs{u\sts{t}}^2dx   }
  \\
  &=\frac{1}{4}\sts{ \frac{1}{2}\int\abs{\partial_x u\sts{t}}^2dx +\sts{\omega-2c^2}\int\abs{u_0}^2dx
  },
\end{align*}
which implies $ \big\| u(t)\big\|_{\dot H^1} $ is uniformly bounded
on $I,$ thus $I=\R$ by the local wellposedness theory (Theorem
\ref{Thm:LWP}), which completes the proof.
\end{proof}

\begin{proof}[Proof of Corollary \ref{cor:gwp}]
By the assumptions, there exists some $c\gg 1$ such that for $M(u_0)\not = 0$,
 \begin{align*}
J_{c^2/4,c}(u_0) \triangleq &  E(u_0) +\frac{c^2}{4}  M(u_0) + c P(u_0)\\
   <  &  \frac{c^2}{4} 2\pi =J^0_{c^2/4,c}(\varphi_{c^2/4,c}),
\\
K_{c^2/4, c}(u_0) \triangleq &
   \int |\partial_xu_0|^2-\frac{3}{16}|u_0|^6+\frac{c^2}{4}|u_0|^2  - c\Im(\overline{u_0}\partial_x\varphi)+
   \frac{c}{2}|u_0|^4
   dx\\
   >& 0,
   \end{align*}
it implies that $u(0) \in \KKK^{+}_{c^2/4,c}$ for some $c\gg1$. Therefore we obtain the result by Theorem \ref{Thm:GWP}.
\end{proof}

\subsection*{Acknowledgements.} The authors would like to thank Takahisa Inui for introducing their work. The authors have been partially supported by the NSF grant of China (No. 11231006, No. 11671046, No. 11671047) and also partially
supported by Beijing Center of Mathematics and Information
Interdisciplinary Science.

%
%
%
%

\end{document}